\documentclass[a4paper]{article}

\usepackage[colorlinks=true, linkcolor=blue, urlcolor=blue, citecolor=blue,%
pagebackref=true]{hyperref}

\usepackage{amsthm}
\usepackage{amsmath}
\usepackage{amssymb}

\def\a{\alpha}
\def\b{\beta}
\def\d{\delta}

\def\e{\varepsilon}
\def\f{\varphi}

\def\Ga{\Gamma}

\def\l{\lambda}

\def\O{\Omega}

\def\R{{\bf R}}

\def\Z{{\bf Z}}

\def\F{{\bf F}}

\def\wh{\widehat}
\def\wt{\widetilde}

\def\HH{\mathcal{H}}

\def\fsl{\mathfrak{sl}}

\def\supp{{\rm supp}\,}

\def\be{\begin{equation}}
\def\ee{\end{equation}}
\def\bea{\begin{eqnarray}}
\def\eea{\end{eqnarray}}
\def\bean{\begin{eqnarray*}}
\def\eean{\end{eqnarray*}}

\newcommand{\lhdneqq}{\:{\substack{\raisebox{-7pt}{$\displaystyle\lhd$}%
\\ \raisebox{3pt}{$\scriptscriptstyle \neq\;$}}}\:}

\newtheorem{lem}{Lemma}
\newtheorem{thm}[lem]{Theorem}
\newtheorem{prp}[lem]{Proposition}

\newtheorem{cor}[lem]{Corollary}

\newtheorem{dfn}[lem]{Definition}

\theoremstyle{definition}

\def\mod{{\rm mod}\;}

\def\dist{{\rm dist}}
\def\Lip{{\rm Lip}}

\def\Reg{{\rm Reg}}
\def\Rego{{\rm Reg}^{\circ}}
\def\chio{\chi^{\circ}}
\def\SU{{\rm SU}}
\def\SL{{\rm SL}}
\def\PSL{{\rm PSL}}

\def\Ker{{\rm Ker}}

\def\diam{{\rm diam}}
\def\gap{{\rm gap}}
\def\dist{{\rm dist}}
\def\L{{\rm L}}
\def\Tr{{\rm Tr}}

\newcommand{\lf}{\left}
\renewcommand{\r}{\right}
\newcommand{\igap}{\:}

\title{Random walks in compact groups}
\author{P\'eter P\'al Varj\'u\thanks{
I gratefully acknowledge the support
of the Simons Foundation and the European Research Council
(Advanced Research Grant 267259)
}
}

\begin{document}

\maketitle

\begin{abstract}
Let $X_1,X_2,\ldots$ be independent identically distributed
random elements of a compact group $G$.
We discuss the speed of convergence of the law of the product
$X_l\cdots X_1$ to the Haar measure.
We give poly-log estimates for certain finite groups and for
compact semi-simple Lie groups.
We improve earlier results of Solovay, Kitaev, Gamburd, Shahshahani
and Dinai.
\end{abstract}

\section{Introduction}\label{sc_intro}

Let $G$ be a group and $S\subset G$ a finite set.
We study the distribution of the product of $l$
random elements of $S$.
In particular, we are interested in how fast this
distribution becomes uniform as $l$ grows.

We discuss the problem in two different but very related settings:
profinite groups, and compact Lie groups.

\subsection{Two ways to measure uniformity}\label{sc_resfin}

We begin by describing the details in the first setting.
Let $G$ be a finite group and $S\subset G$ be a finite generating
set.
For simplicity, we assume that $1\in S$.
The unit element of any group is denoted by $1$ in this paper.
Write
\[
S^l=\{g_1\cdots g_l:g_1,\ldots, g_l\in S\}
\]
for the $l$-fold product set of $S$.

The diameter of $G$ with respect to $S$ is defined by
\[
\diam(G,S)=\min\{l:S^l=G\}.
\]
The diameter is the minimal length of a product that can 
express any element of the group.
Hence it is
a (very weak) quantity to measure uniformity.

We quantify uniformity in a stronger sense, too.
To this end, we introduce the notion of random walks.
Denote by $\mu_S$ the normalized counting measure on the set $S$,
and let $X_1,X_2,\ldots$ be a sequence of independent random elements of
$S$ with law $\mu_S$.
The (simple) random walk is the sequence of random elements $Y_0,Y_1,\ldots$
of $G$ such that $Y_0=1$ almost surely, and $Y_{l+1}=X_{l+1}Y_l$
for all $l\ge 0$.

Denote by $\mu_S^{*(l)}$ the $l$-fold convolution of
the measure $\mu_S$ with itself.
Convolution of two measures (or functions) $\mu, \nu$ is defined
by the usual formula
\[
\mu*\nu(g)=\sum_{h\in G}\mu(gh^{-1})\nu(h).
\]
Observe that $\mu_S^{*(l)}$ is the law of $Y_l$.

We want to understand, how large $l$ is needed to be taken such that
$\mu_S^{*(l)}$ is ``very close" to the uniform distribution.
We make this precise with the following construction.
Consider the space $L^2(G)$ which is simply the vector space of
complex valued functions on $G$ endowed with the standard scalar product.
The group $G$ acts on this by
\[
\Reg_G(g)f(h)=f(g^{-1}h)\quad {\rm for }\quad f\in L^2(G).
\]
This is a unitary representation called the regular representation.

To a measure $\mu$, we associate an operator (linear transformation)
on $L^2(G)$:
\[
\Reg_G(\mu)=\sum_{g\in G} \mu(g)\cdot\Reg_G(g).
\]
This is an analogue of the Fourier transform of classical harmonic
analysis.
In particular, it has the following property:
\[
\Reg_G(\mu*\nu)=\Reg_G(\mu)\cdot\Reg_G(\nu),
\]
which is easy to verify from the definitions.
We can recover $\mu$ by the formula
\be\label{eq_inversion}
\mu=\Reg_G(\mu)\d_{1},
\ee
where $\d_{1}$ is the Dirac measure supported at 1.
(Recall that $G$ is finite, hence we can embed the space of probability
measures into $L^2$.)

The operator $\Reg_G(\mu_S)$ is of norm 1 and it acts trivially on
the one dimensional space of constant functions.
We denote by $\Rego_G$ the restriction of $\Reg_G$ to the 1 codimensional
space orthogonal to constants.
We define by
\[
\gap(G,S):=1-\|\Rego_G(\mu_S)\|
\]
the spectral gap of the random walk on $G$ generated by $S$.
Later, we will consider a slightly more general situation and
replace $\mu_S$ by an arbitrary probability measure $\mu$.
Then we write
\[
\gap(G,\mu):=1-\|\Rego_G(\mu)\|.
\]

It is clear that
\be\label{eq_gapapprox}
\lf\|\Reg_G\lf(\mu_S^{*(l)}\r)-\Reg_G(\mu_G)\r\|<e^{-l\cdot\gap(G,S)}.
\ee
This shows that if we take say $l=10\log|G|/\gap(G,S)$, then
$\mu_S^{*(l)}$ is very close to the uniform distribution
(see (\ref{eq_inversion})).
In particular, the support of $\mu_S^{*(l)}$ is the whole group.
Thus spectral gap is a stronger quantity to measure uniformity
than diameter.
Somewhat surprisingly we can obtain a bound in the other direction,
as well.

\begin{lem}\label{lm_folkfin}
Let $G$ be an arbitrary finite group, and $1\in S\subset G$.
Suppose that $S$ is symmetric, i.e. $g\in S$ if and only if
$g^{-1}\in S$.
We have:
\[
(\diam(G,S)-1)/\log|G|\le \gap(G,S)^{-1}\le |S|\diam(G,S)^2.
\]
\end{lem}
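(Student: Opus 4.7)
The plan is to prove the two inequalities separately, each by a direct computation.

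\textbf{Lower bound.} For $(\diam(G,S)-1)/\log|G|\le\gap(G,S)^{-1}$, I set $l=\diam(G,S)-1$. By the definition of diameter there exists $g\notin S^l$, and at this $g$ the convolution $\chi_S^{*(l)}$ vanishes while $\chi_G(g)=1/|G|$, so $\|\chi_S^{*(l)}-\chi_G\|_2\ge 1/|G|$. On the other hand, applying the operator in (\ref{eq_gapapprox}) to the Dirac measure $\chi_1$ and using the inversion formula (\ref{eq_inversion}) (with $\|\chi_1\|_2=1$) gives $\|\chi_S^{*(l)}-\chi_G\|_2<e^{-l\gap(G,S)}$. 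Combining these two estimates and taking logarithms yields $\gap(G,S)<\log|G|/l$, which is the desired bound.

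\textbf{Upper bound.} For $\gap(G,S)^{-1}\le|S|\diam(G,S)^2$, I would use a Dirichlet form / Poincar\'e inequality. Writing $T=\Reg_G(\chi_S)$ (self-adjoint, since $S$ is symmetric), a direct expansion of inner products yields
\[
\langle(I-T)f,f\rangle=\frac{1}{2|S|}\sum_{s\in S}\sum_{h\in G}|f(sh)-f(h)|^2
\]
for every $f\in L^2(G)$. Set $N:=\diam(G,S)$. For each $g\in G$ fix a factorization $g=s_1\cdots s_N$ with $s_i\in S$ (padding with the identity, possible since $1\in S$). A telescoping sum combined with Cauchy--Schwarz gives $|f(gh)-f(h)|^2\le N\sum_{i=1}^N|f(s_ih_i)-f(h_i)|^2$ with $h_i:=s_{i-1}\cdots s_1h$. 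Summing first in $h$ (each $h\mapsto h_i$ is a bijection of $G$) and then in $g$ yields
\[
\sum_{g,h\in G}|f(gh)-f(h)|^2\le 2N^2|S||G|\igap\langle(I-T)f,f\rangle.
\]
When $f$ is orthogonal to the constants, the left-hand side equals $2|G|\|f\|_2^2$ (the cross-terms vanish), giving $\langle(I-T)f,f\rangle\ge\|f\|_2^2/(N^2|S|)$. Hence the largest eigenvalue of $T$ on the complement of constants is at most $1-1/(N^2|S|)$.

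\textbf{Main obstacle.} The subtle point is that $\|\Rego_G(\chi_S)\|$ equals $\max(\lambda_{\max},-\lambda_{\min})$ on the complement of constants, whereas the Dirichlet argument controls only $\lambda_{\max}$. To bound the negative eigenvalues I would use that $1\in S$: writing $T=|S|^{-1}I+(1-|S|^{-1})\wt T$ with $\wt T=\Reg_G(\chi_{S\sm\{1\}})$ of norm at most $1$, one gets $\lambda_{\min}(T)\ge(2-|S|)/|S|$ and hence $-\lambda_{\min}(T)\le 1-2/|S|\le 1-1/(N^2|S|)$, the last step being immediate for $N\ge 1$. Combining both ends of the spectrum, $\|\Rego_G(\chi_S)\|\le 1-1/(N^2|S|)$, which rearranges to $\gap(G,S)^{-1}\le|S|\diam(G,S)^2$.
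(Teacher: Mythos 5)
Your proof is correct. Note that the paper does not actually prove this lemma: it remarks that the first inequality ``follows directly from (\ref{eq_gapapprox})'' and refers to \cite[Corollary 1 in Section 3]{DSC} for the second. Your lower-bound argument is exactly the intended deduction from (\ref{eq_gapapprox}): apply the operator to $\chi_1$, use the inversion formula (\ref{eq_inversion}), and compare with the pointwise deviation $1/|G|$ at a point outside $S^{\diam(G,S)-1}$ (any normalization of the inner product cancels, so this is robust). For the upper bound you supply the standard Diaconis--Saloff-Coste comparison proof in place of the citation: the Dirichlet-form identity (which uses the symmetry of $S$), the telescoping/Cauchy--Schwarz estimate along words of length $N=\diam(G,S)$, and, crucially, the bound on the negative part of the spectrum via the holding probability $1/|S|$ coming from $1\in S$ --- this last point is exactly what is needed because $\gap(G,S)$ is defined through the operator norm $\|\Rego_G(\chi_S)\|$ and not merely through $\lambda_{\max}$, and it is the reason the lemma assumes $1\in S$. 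What your route buys is a self-contained proof with the stated constant $|S|\diam(G,S)^2$; what the paper's citation buys is brevity. One cosmetic indexing slip: with $h_i=s_{i-1}\cdots s_1h$ the telescoping sum gives $f(s_N\cdots s_1h)-f(h)$, i.e.\ it corresponds to the reversed word $s_N\cdots s_1$ rather than $g=s_1\cdots s_N$; either factor $g=s_N\cdots s_1$ or set $h_i=s_{i+1}\cdots s_Nh$. Since every element of $G$ admits a length-$N$ word in either convention, the argument is unaffected.
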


This lemma is well-known.
The second (and more difficult) inequality of it
can be found for example in \cite[Corollary 1 in Section 3]{DSC}.
The other estimate follows directly from (\ref{eq_gapapprox}).
The assumption on symmetricity is not an essential one.
Later, at the beginning of Section \ref{sc_thfin} we show how to reduce
the problem to the symmetric case.

We stop for a moment to connect our terminology to the computer science
and combinatorics literature.
If $S$ is symmetric, then the
matrix of the operator $\Reg_G(\mu_S)$ is proportional
to the adjacency matrix of the Cayley graph of $G$ with respect to
the generating set $S$.
In that case $\gap(G,S)$ is proportional to the spectral gap
of the Cayley graph.
If $\gap(G,S)\ge c>0$ for a family of groups and generators than
the corresponding family of Cayley graphs are called expanders.

However, in this paper we are looking for weaker bounds of
the form $\gap(G,S)\ge\log^{-A}|G|$ which, in light
of the above Lemma, is equivalent to $\diam(G,S)\le\log^{A'}|G|$
as long as say $|S|<10$ and one does not care about the value of
$A$ and $A'$.
We call such bounds poly-logarithmic.

\subsection{Prior results}

It was conjectured by Babai and Seress \cite[Conjecture 1.7]{BS}
that the family of non-Abelian
finite simple groups have poly-logarithmic diameter, i.e.
there is a constant $A$ such that
$\diam(G,S)\le\log^{A}|G|$ holds for any non-Abelian finite simple
group $G$ and generating set $S\subset G$.
This has been verified for the family $SL_2(\F_p)$ by Helfgott
\cite[Main Theorem]{Hel1}
and for finite simple groups of Lie type of bounded rank
by Breuillard, Green and Tao \cite[Theorem 7.1]{BGT} and Pyber and Szab\'o
\cite[Theorem 2]{PSz} independently.
The conjecture is still open for other families
of finite simple groups.
The best known bound to date on the diameter of alternating groups
is due to Helfgott and Seress \cite{HS} and it is slightly weaker than
poly-logarithmic.

However, the first results on poly-logarithmic diameter were obtained
for non-simple groups.
Fix a prime $p$ and a symmetric set $S\subset \SL_2(\Z)$ such
that (the projection of) $S$ generates $\SL_2(\Z/p^2\Z)$.
Gamburd and Shahshahani \cite[Theorem 2.1]{GSh} proved
the poly-log diameter estimate
\[
\diam(\SL_2(\Z/p^n\Z),S)\le C\log^{A}|\SL_2(\Z/p^n\Z)|,
\]
where $C$ depends only on $S$
and $A$ is an absolute constant.
Dinai \cite[Theorem 1.2]{Din1}  observed that the result holds with $C$
depending only on $p$ and he also improved the parameter $A$.
Thus the family $\SL_2(\Z/p^n\Z)$ enjoys a uniform poly-log diameter
bound with respect to arbitrary generators.
Using the result of Helfgott \cite{Hel1},
the constant $C$ can be made absolute.
In \cite[Theorem 1.1]{Din2}, Dinai extended the result
to the quotients of other Chevalley groups over
local rings.

The result of this paper is also about non-simple finite groups.
In fact, it is part of our assumptions
that all simple quotients of the groups we study
has poly-logarithmic diameter.
Our result has a huge overlap with \cite{GSh}, \cite{Din1}
and \cite{Din2}.
We will remark on this later.

\subsection{The role of quasirandomness}

Our approach is based on representation theory, but we will use
only very basic facts of the theory.
We explain the key idea of the paper, which allows us to estimate
the spectral gap based on lower bounds for the dimension of nontrivial
representations of the group.
Let $\mu$ be a probability measure on $G$.
Write $\chi_{G}$ and $\chio_{G}$ respectively for the characters
of $\Reg_G$ and $\Rego_G$.
Furthermore, we write
\[
\chi(\mu)=\sum_{g\in G} \mu(g)\cdot\chi(g),
\]
where $\chi$ is the character of a representation of $G$.
Then $\chi_G(\mu)$ is the trace of the operator $\Reg_G(\mu)$.

For the moment, assume that $\mu$ is symmetric that is
$\mu(g^{-1})=\mu(g)$ for all $g\in G$.
Then $\Reg_{G}(\mu)$ is selfadjoint.
We can decompose $L^2(G)$ as the orthogonal sum of irreducible
subrepresentations of $\Reg_G$.
As is well known, if $\pi$ is an irreducible representation of $G$,
then exactly $\dim \pi$ isomorphic copies of $\pi$ appear
in the decomposition of $\Reg_G$.
If $\l$ is an eigenvalue of $\Reg_G(\mu)$ then there is a
corresponding eigenvector in one of the irreducible representations of
$G$.
Moreover, there is one in each isomorphic copy.
This leads us to the following inequality which is fundamental
to us:
\be\label{eq_SX}
\dim(\pi)\cdot\l^2\le\chi_G(\mu*\mu),
\ee
where $\pi$ is an irreducible representation of $G$ such that there
is an eigenvector corresponding to $\l$ in $\pi$.
(Note that all eigenvalues of $\Reg_G(\mu*\mu)$ are non-negative.)

This idea goes back to Sarnak and Xue \cite{SX}
and it is also one of the major steps in the work of
Bourgain and Gamburd \cite{BG1} on estimating spectral gaps
and also in several papers \cite{BG2}, \cite{BG3},
\cite{BGS}, \cite{Var0}, \cite{BV}, \cite{SGV} which follow it.
Gowers \cite{Gow} also exploited the idea, and introduced the
term quasirandom for groups that does not have low dimensional
non-trivial representations.
He proved several properties of such groups, in particular that
they do not have large product-free subsets.
Nikolov and Pyber \cite[Corollary 1]{NP}
pointed out that Gowers's result implies
that any element of a quasirandom group can be expressed as
the product of three elements of a sufficiently large
subset.

In what follows, $G$ is a profinite group and $\O$ denotes the family
of finite index normal subgroups of it.
An interesting example
to have in mind is $G=\SL_d(\wh\Z)$,
where $\wh\Z$ is the profinite (congruence) completion of the integers.
(The reader unfamiliar with the notion of profinite groups may
assume that $G$ is finite without loss of generality.)
Inspired by Gowers's terminology, we make the following definition.

\begin{dfn}
We say that a profinite group $G$ is
$(c,\a)$-quasirandom if for every irreducible unitary representation $\pi$
of $G$, we have
\[
\dim \pi\ge c[G:\Ker(\pi)]^\a.
\]
\end{dfn}

\subsection{Results about profinite groups}

Let $G$ be a profinite group and $\Ga$ a finite index normal subgroup.
Our plan is to prove the estimate
\be\label{eq_polylogest}
\gap(G/\Ga,S)\ge c\log^{-A}[G:\Ga]
\ee
with constants $c,A$ independent of $\Ga$ and $S$.
We prove this statement by induction as follows:
We find a larger subgroup $\Ga\lhdneqq\Ga'\in\O$
and assume that the above spectral gap estimate holds for $\Ga'$.
We use this to bound the trace of the operator
$\Reg_{G/\Ga'}(\mu_S^{*(l)})$ for a suitable integer $l$.
This in turn gives an estimate for the trace of
$\Reg_{G/\Ga}(\mu_S^{*(l)})$, and by (\ref{eq_SX})
we can estimate $\gap(G/\Ga,S)$ and continue by induction.

In order that the induction step works, we need to ensure that
$[\Ga':\Ga]$ is ``not very large" compared to $[G:\Ga]$.
Of course, we cannot always find a suitable $\Ga'$, in particular
when $G/\Ga$ is simple.

The statement of the following theorem is very technical, so we
first explain it informally:
We suppose that $G$ is a quasirandom profinite group and partition
its finite index normal subgroups into two sets: $\O_1\dot{\cup}\O_2$.
We assume that for $\Ga\in\O_1$ we can find a larger subgroup
$\Ga'\in\O$ so that $[\Ga':\Ga]$ is ``not very large".
We assume further that for $\Ga\in\O_2$, the quotient $G/\Ga$ has
poly-log spectral gap (i.e. (\ref{eq_polylogest}) is satisfied).
Then we can conclude a poly-log estimate (\ref{eq_polylogest}) for all
$\Ga\in\O$.

\begin{thm}\label{th_fin}
Let $c_1,\a,\b$ and $A$ be positive numbers satisfying
$\b<\a$ and
\[
A\ge (\a-\b)^{-1}-1.
\]
Then there is a positive number $C_1$ depending only on $c_1,\a,\b$
and $A$ such that the following holds.

Let $G$ be a $(c_1,\a)$-quasirandom profinite group.
Let $\O_1\dot{\cup}\Omega_2=\O$ be a partition of the family of
finite index normal subgroups of $G$.

Suppose that
for all $\Ga\in\O_1$, we have $[G:\Ga]>C_1$
and there is $\Ga'\in\O$ with $\Ga\lhdneqq\Ga'$ such that
\be\label{eq_conditioninduction}
[\Ga':\Ga]\le[G:\Ga']^\b.
\ee

Let $\mu$ be a Borel probability measure on $G$ and suppose that
\be{\label{eq_BS}}
\gap(G/\Ga,\mu)\ge B\cdot\log^{-A}[G:\Ga]
\ee
for all $\Ga\in\O_2$.

Then for all $\Ga\in\Omega$
\[
\gap(G/\Ga,\mu)\ge C_1^{-1}B\cdot\log^{-A}[G:\Ga].
\]
\end{thm}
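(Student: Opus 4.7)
The plan is to prove the bound by strong induction on $[G:\Gamma]$. The base case is $\Gamma\in\O_2$, where the estimate is furnished by (\ref{eq_BS}). For the inductive step, fix $\Gamma\in\O_1$; by hypothesis $[G:\Gamma]>C_1$ and there is $\Gamma'\in\O$ satisfying (\ref{eq_conditioninduction}). Write $n=[G:\Gamma]$, $m=[G:\Gamma']$, so that $n\le m^{1+\b}$ and $[G:\Gamma'']<n$ for every $\Gamma''\in\O$ properly containing $\Gamma$. After replacing $\mu$ by $\check\mu*\mu$ (which alters $\gap$ by at most a bounded factor), I may assume $\Rego_{G/\Gamma}(\mu)$ is self-adjoint, so $\l_0:=\|\Rego_{G/\Gamma}(\mu)\|$ is attained as an eigenvalue of $\pi_0(\mu)$ for some non-trivial irreducible representation $\pi_0$ of $G/\Gamma$ with kernel $K_0\supseteq\Gamma$.

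I split the analysis according to $K_0$. If $K_0\supsetneq\Gamma$, then $\pi_0$ factors through $G/K_0$ and $[G:K_0]<n$, so the inductive hypothesis applied at $G/K_0$ (whether $K_0\in\O_1$ or $\O_2$) gives
\[
\l_0\le\|\Rego_{G/K_0}(\mu)\|\le 1-C_1^{-1}B\log^{-A}[G:K_0]\le 1-C_1^{-1}B\log^{-A}n,
\]
settling this case. Otherwise $K_0=\Gamma$, so $\pi_0$ is faithful on $G/\Gamma$, and the quasirandomness hypothesis gives the crucial lower bound $\dim\pi_0\ge c_1n^\a$.

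For this remaining faithful case, I iterate (\ref{eq_SX}) (applied to $\mu^{*k}$) to obtain
\[
(\dim\pi_0)\l_0^{2k}\le\chi_{G/\Gamma}(\mu^{*2k})=n\,\mu^{*2k}(\Gamma)\le n\,\mu^{*2k}(\Gamma'),
\]
and control $\mu^{*2k}(\Gamma')$ using the inductive spectral gap at $\Gamma'$ via (\ref{eq_gapapprox}): $\mu^{*2k}(\Gamma')\le m^{-1}+e^{-2k\cdot\gap(G/\Gamma',\mu)}$. Taking $k$ of order $\log(m)/\gap(G/\Gamma',\mu)$ balances the two contributions, and combined with $\dim\pi_0\ge c_1n^\a$ and $n\le m^{1+\b}$ yields $\l_0^{2k}\lea m^{-\sigma}$ with $\sigma:=\a(1+\b)-\b>0$ (positivity from $\a>\b$). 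Taking logarithms gives a single-step comparison of the form $\gap(G/\Gamma,\mu)\gea\sigma\cdot\gap(G/\Gamma',\mu)$.

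The main obstacle is to convert this comparison into the target bound $\gap(G/\Gamma,\mu)\ge C_1^{-1}B\log^{-A}n$. Combined with the inductive estimate $\gap(G/\Gamma',\mu)\ge C_1^{-1}B\log^{-A}m$, this reduces to an elementary inequality essentially of the form $\sigma(\log n/\log m)^A\ge 1$, which must be verified uniformly in the possible values of $\log m/\log n\in[1/(1+\b),1)$ permitted by (\ref{eq_conditioninduction}). The hypothesis $A\ge(\a-\b)^{-1}-1$, together with $n\le m^{1+\b}$, is designed precisely so that this holds, possibly after replacing the single $\Gamma'$ by a chain $\Gamma\subsetneq\Gamma_1\subsetneq\cdots\subsetneq\Gamma_T\in\O_2$ obtained by iterating (\ref{eq_conditioninduction}); tracking the constants carefully through this balancing, especially when $\log n/\log m$ is close to $1$, is the most technically involved part of the proof.
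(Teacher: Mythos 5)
Your proposal shares several ingredients with the paper (quasirandomness plus the multiplicity inequality (\ref{eq_SX}), and using the gap at $\Ga'$ to control the return probability $\mu^{*2k}(\Ga')$), but the quantity you carry through the induction is the spectral gap itself, and that is where the argument breaks. Your single-step conclusion has the shape $\gap(G/\Ga,\mu)\gea\s\cdot\gap(G/\Ga',\mu)$ with $\s=\a(1+\b)-\b<1$ a fixed constant; in fact no choice of $k$ does better than a loss factor of roughly $\a$, since the dimension count contributes at most $\a\log n$ to the numerator while $k$ must be at least of order $\log m/\gap(G/\Ga',\mu)$ to keep the term $e^{-2k\gap(G/\Ga',\mu)}$ below $m^{-1}$. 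The compensating factor $(\log n/\log m)^A$ tends to $1$ when the step $[\Ga':\Ga]$ is small --- note that (\ref{eq_conditioninduction}) is only an upper bound on $[\Ga':\Ga]$, so steps of bounded index with $[G:\Ga']$ huge are exactly the typical case, e.g.\ for $\SL_2(\Z_p)$-type towers. Hence the ``elementary inequality'' $\s(\log n/\log m)^A\ge1$ you reduce to is false in that regime, and no hypothesis on $A$ can repair it, because the per-step loss is a constant factor while the per-step gain is only $1+O(Y/X)$ with $Y=\log[\Ga':\Ga]$, $X=\log[G:\Ga']$. Iterating along a chain of length $T\asymp\log[G:\Ga]$ the losses compound to $\s^{T}$, i.e.\ a polynomially (not poly-logarithmically) small gap; and jumping in one step to some $\Ga_T\in\O_2$ fails as well, since then the return probability is only bounded by about $1/[G:\Ga_T]$, which may be far larger than $1/m$, making the eigenvalue bound trivial.

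The paper circumvents precisely this obstacle by never comparing gaps step by step. The inductive statement is the trace bound (\ref{eq_find}), $\chi_{G/\Ga}(\mu^{*(l_\Ga)})\le M$, at a walk length $l_\Ga=2\lfloor C_\Ga\log^{A+1}[G:\Ga]\rfloor$ growing like $\log^{A+1}$ of the index, with $C_\Ga=C_0(10-\log^{-1/10}[G:\Ga])$ increasing slightly with the index so as to absorb lower-order errors. At level $\Ga$ one uses only the weak eigenvalue bound $\l^{l_{\Ga'}}\le M[\Ga':\Ga]/(c_1[G:\Ga']^{\a})$, i.e.\ $\log(1/\l)\gea(\a X-Y)/l_{\Ga'}$, and then runs the extra $l_\Ga-l_{\Ga'}$ steps to restore the same uniform constant $M$ at level $\Ga$; the hypothesis $A+1\ge(\a-\b)^{-1}$ enters exactly in checking that $\bigl((l_\Ga-l_{\Ga'})/l_{\Ga'}\bigr)(\a X-Y)$ beats $Y$, see (\ref{eq_wanted})--(\ref{eq_final}). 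Because the propagated quantity is a constant rather than the gap, nothing compounds along the chain, and the spectral gap is extracted from (\ref{eq_find}) only once, at the very end, via quasirandomness. Your dichotomy on the kernel $K_0$ corresponds to the paper's $\Ga''=\Ga$ versus $\Ga''\neq\Ga$ cases and is sound; it is the bookkeeping of the induction --- what is assumed and what is proved at each level, and at which time scale the walk is examined --- that must be changed for the proof to go through.
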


To verify assumption (\ref{eq_BS}), we need to use known cases
of the Babai--Seress conjecture mentioned above.
Fortunately, this is available for many cases in
the papers \cite[Main Theorem]{Hel1}, \cite[Corollary 1.1]{Hel2},
\cite[Theorem 7.1]{BGT}, \cite[Theorem 2]{PSz}.
In particular we can deduce the following.

\begin{cor}\label{cr_sld}
Let $S\subset \SL_d(\wh\Z)$ be a set generating a dense subgroup.
Then for all integers $q>1$, we have 
\[
\gap(\SL_d(\Z/q\Z),S)\ge c\log^{-A}(q),
\]
where
$A>0$ is a number depending on $d$ and $c$ is a number depending
on $d$ and $|S|$.
\end{cor}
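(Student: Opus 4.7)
My plan is to apply Theorem \ref{th_fin} to the profinite group $G=\SL_d(\wh\Z)$ with measure $\mu=\chi_S$, and then specialise the conclusion to $\Gamma_q:=\ker(G\to\SL_d(\Z/q\Z))$, whose quotient is $\SL_d(\Z/q\Z)$. The first task is to verify that $G$ is $(c_1,\alpha)$-quasirandom. Every continuous irreducible unitary representation factors through some $\SL_d(\Z/q\Z)$; via the direct-product decomposition of this group over the prime divisors of $q$, the required dimension lower bound reduces to the case of $\SL_d(\Z/p^n\Z)$. For $n=1$ the Landazuri--Seitz estimate $\dim\pi\gea p^{d-1}$ together with $|\SL_d(\F_p)|\sim p^{d^2-1}$ yields $\alpha=1/(d+1)$, and higher levels $n\geq 2$ are handled by a Clifford-theoretic argument on the abelian congruence kernels $K_{i,n}=\ker(\SL_d(\Z/p^n\Z)\to\SL_d(\Z/p^i\Z))$, using that a faithful irreducible representation must restrict nontrivially to $K_{n-1,n}$.

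I then partition $\Omega=\Omega_1\dot{\cup}\Omega_2$, putting in $\Omega_2$ all $\Gamma$ with $[G:\Gamma]\leq C_1$ together with all $\Gamma$ for which $G/\Gamma$ is a quotient of $\SL_d(\F_p)$ for some prime $p$. For $\Gamma$ of the first kind the gap estimate (\ref{eq_BS}) is trivial, while for those of the second kind $G/\Gamma$ is almost simple of Lie type of bounded rank, so the theorems of Helfgott \cite[Main Theorem]{Hel1}, Breuillard--Green--Tao \cite[Theorem 7.1]{BGT}, and Pyber--Szab\'o \cite[Theorem 2]{PSz} supply a poly-logarithmic diameter bound for the image of $S$ in $G/\Gamma$; Lemma \ref{lm_folkfin} converts this into (\ref{eq_BS}) with a constant $B$ depending on $|S|$. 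For $\Gamma\in\Omega_1$ I must produce $\Gamma'\in\Omega$ satisfying $\Gamma\lhdneqq\Gamma'$ and $[\Gamma':\Gamma]\leq[G:\Gamma']^\beta$ for some $\beta\in(0,\alpha)$. Writing the minimal $q$ with $\Gamma\supseteq\Gamma_q$ as $\prod_i p_i^{n_i}$, the exclusion from $\Omega_2$ forces either multiple prime divisors or some $n_i\geq 2$. In the multi-prime case $\Gamma'$ is chosen so that $\Gamma'/\Gamma$ corresponds to a normal subgroup of $G/\Gamma$ killing the smallest prime factor; in the high-level case $\Gamma'$ is the preimage of the image of $K_{n_i-1,n_i}$ inside $G/\Gamma$, a normal subgroup of order at most $p_i^{d^2-1}$, while $[G:\Gamma']\gea p_i^{(n_i-1)(d^2-1)}$, giving the constraint as soon as $n_i\geq 1+1/\beta$.

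The main obstacle is the residual low-level cases, notably $n_i=2$ for moderate $d$, in which the natural choice of $\Gamma'$ produces a ratio $[\Gamma':\Gamma]/[G:\Gamma']$ of order one and so cannot meet the required inequality for any $\beta<\alpha=1/(d+1)$. To handle these I propose to enlarge $\Omega_2$ to include all $\Gamma$ with $G/\Gamma$ a quotient of $\SL_d(\Z/p^n\Z)$ for $n\leq N_0(\beta)$, supplying (\ref{eq_BS}) there by invoking the results of Dinai \cite[Theorem 1.1]{Din2}, or alternatively running a parallel induction on the congruence level with a finer partition. Once this is arranged, the constraint $A\geq(\alpha-\beta)^{-1}-1$ in Theorem \ref{th_fin} fixes an admissible $A=A(d)$, and the prefactor $c$ inherits its $|S|$-dependence from the $\Omega_2$ input, which is exactly the statement of Corollary \ref{cr_sld}.
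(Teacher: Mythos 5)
Your overall strategy is the paper's: verify quasirandomness of $\SL_d(\wh\Z)$ via the prime-power case and the product decomposition, feed the known poly-log diameter bounds for $\SL_d(\F_p)$ through Lemma \ref{lm_folkfin} to get (\ref{eq_BS}), and build $\Ga'$ by deleting a prime factor to satisfy (\ref{eq_conditioninduction}). However, there is a genuine gap in your treatment of the residual cases, and you have misidentified where the real difficulty lies. The troublesome moduli are not only prime powers $p^{n}$ of low level: if $q=p_1p_2\cdots p_k$ is a product of a \emph{few distinct comparable primes} ($2\le k\lesssim 1+1/\beta$, and note $\beta<\alpha\le 1/(d+1)$ forces $1/\beta>d+1$), then killing the smallest prime factor gives $[\Ga':\Ga]\approx p_1^{d^2-1}$ while $[G:\Ga']\approx p_2^{d^2-1}\cdots p_k^{d^2-1}$, so with all $p_i$ of comparable size the inequality $[\Ga':\Ga]\le[G:\Ga']^{\beta}$ fails for every $\beta<1/(k-1)$, in particular for every admissible $\beta$, and no other choice of $\Ga'$ does better. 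These moduli are not covered by your enlarged $\O_2$ (which contains only quotients of $\SL_d(\Z/p^{n}\Z)$, $n\le N_0$), and your proposed patch via Dinai \cite[Theorem 1.1]{Din2} cannot cover them either, since that result (like \cite{GSh}, \cite{Din1}) is restricted to prime-power moduli --- a limitation the paper itself points out. The same objection applies to your "parallel induction on the congruence level", which again only concerns prime powers.

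The paper closes exactly this hole by declaring $\Ga\in\O_2$ whenever the associated $q$ has at most $M\approx(d+2)/\beta$ prime factors counted with multiplicity (so \emph{all} products of boundedly many primes and prime powers, however large and however comparable), and then proving a poly-log diameter bound for $\SL_d(\wh\Z)/\Ga_q$ for such $q$ by a bootstrapping lemma (Lemma \ref{lm_diam}): one adds one prime at a time, producing a nontrivial element of the congruence kernel from a coset collision and spreading it by conjugation, using Lev's covering theorem for a nontrivial conjugacy class of $\SL_d(\F_p)$ when the new prime is fresh and \cite[Lemma 5]{BV} for the $\fsl_d(\F_p)$ layer when it repeats; only then does Lemma \ref{lm_folkfin} give (\ref{eq_BS}) on all of $\O_2$. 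You need this bootstrapping step (or a substitute handling products of a few distinct primes) for your plan to go through. A secondary point: your Clifford-theoretic sketch of quasirandomness for $\SL_d(\Z/p^{n}\Z)$, resting only on nontriviality of the restriction to the last abelian layer $K_{n-1,n}\cong\fsl_d(\F_p)$, cannot give an exponent $\alpha$ independent of $n$ (the orbit sizes there are at most $p^{d^2-1}$ while the index is $\approx p^{n(d^2-1)}$); either cite \cite[Lemme 5.1]{dS} or \cite[Lemma 7.1]{BG2} as the paper does, or run a genuinely level-uniform induction.
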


We stress here that $c$ depends only on the cardinality
of $S$.
Note that this dependence is necessary, owing to the following
example:
It may happen that all but one element of $S$ projects into a proper subgroup
of $\SL_d(\Z/q\Z)$ in which case the characteristic function of
the subgroup is an almost invariant function if $|S|$ is large.

In the case $S\subset \SL_d(\Z)$ one can improve the poly-log bound to
a uniform one.
More precisely, Bourgain and Varj\'u \cite[Theorem 1]{BV} proved
$\gap(\SL_d(\Z/q\Z),S)\ge c$
with a constant $c>0$ independent of $q$ but dependent on $S$.
This strongly resonates with the sate of affairs for compact semi-simple
Lie groups to be discussed below.

Finally, we compare our result to the papers of
Gamburd and Shahshahani \cite[Theorem 2.1]{GSh} and Dinai
\cite[Theorem 1.2]{Din1},
\cite[Theorem 1.1]{Din2}.
These papers give results similar to Corollary \ref{cr_sld}, and
even more, the proofs have some common features with our approach.
However, they obtain the diameter bound directly and they use
properties of commutators instead of representation theory.
A flaw of our approach that it is not constructive, i.e. we can
not give an efficient algorithm to express an element of $G/\Ga$
as a product of elements of $S$.
On the contrary, \cite[Theorem 2.1]{GSh}, \cite[Theorem 1.2]{Din1},
\cite[Theorem 1.1]{Din2} give such algorithms.
The advantage of our paper that it seems to apply in more
general situations, e.g. \cite[Theorem 2.1]{GSh}, \cite[Theorem 1.2]{Din1},
\cite[Theorem 1.1]{Din2} are restricted to the case when $q$ is
the power of a prime.

A comment on the exponents:
If one considers the group $G=\SL_2(\Z_p)$, where $p$ is a fixed
prime, then the conditions of
Theorem \ref{th_fin} can be satisfied for any $A>2$.
This via Lemma \ref{lm_folkfin}
exactly recovers the diameter bound for $\SL_2(\Z/p^k\Z)$
in the paper \cite[Theorem 1.2]{Din1}, but
our bound for the spectral gap
is better than what could be obtained from $\cite{Din1}$.
If one considers $\SL_d$ with $d$ large than our bounds
deteriorate compared to \cite{Din1}.
However, it seems possible that a more careful version of our
argument could give better bounds, but this requires a more
precise understanding of the representations.
In Section \ref{sc_remarks}  we include
some remarks about what this would require.
These ideas are worked out in the setting of compact Lie groups.

\subsection{Results about compact Lie groups}\label{sc_rescomp}

We turn to the second setting of our paper. 
Let $G$ be a semi-simple compact Lie group endowed with the bi-invariant
Riemannian metric.
We denote by $\dist(g,h)$ the distance of two elements $g,h\in G$.
Let $\e>0$ be a number and $S\subset G$ be a finite
subset which generates a dense subgroup.
Again, for simplicity, we assume $1\in S$.
We define the diameter of $G$ at
scale $\e$ with respect to $S$ by
\[
\diam_\e(G,S)=\min\{l:\;{\rm for\:every\:}
g\in G\;{\rm there\: is\:} h\in S^l\;{\rm such\: that\:}
\dist(g,h)<\e\}.
\]

We also introduce the relevant spectral gap notion.
This requires some basic facts about the representation theory
of compact Lie groups.
We follow the notation in \cite{BtD}, but the results we need
can be found in many of the textbooks on the subject, as well.

Let $T$ be a maximal torus
in $G$ and denote by $\L T$ its tangent space at 1.
Then $T$ can be identified with $\L T/I$ via the exponential map,
where $I$ is a lattice in $\L T$.
Denote by $\L T^*$ the dual of $\L T$ and by $I^*\subset\L T^*$
the lattice dual to $I$.
We denote by $R\subset I^*$ the set of roots, and by $R_+$
(a choice of) the positive roots.
We fix an inner product $\langle\cdot,\cdot\rangle$ on $\L T$
which is invariant under the Weyl group.
Denote by
\[
K=\{u\in\L T:\langle u,v\rangle>0 \;{\rm for\: every}\; v\in R_+\}
\]
the positive Weyl chamber and by $\overline K$ its closure.

It is well known (see
\cite[Chapter IV (1.7)]{BtD}) that the irreducible representations
of $G$
can be parametrized by the elements of $I^*\cap\overline K$.
For $v\in I^*\cap\overline K$, we denote by $\pi_v$ the unitary
representation of $G$ with highest weight $v$.

For a finite Borel measure $\mu$ on $G$, we write
\[
\pi_v(\mu)=\int\pi_v(g)\igap d\mu(g),
\]
which is an operator (linear transformation) on the
representation space of $\pi_v$.
Let $r>1$ be a number, and $S\subset G$ a finite set
which contains 1 and
generates a dense subgroup.
Define the spectral gap at
scale $1/r$ with respect to $S$ by
\[
\gap_r(G,S):=1-\max_{0<|v|\le r}\|\pi_v(\mu_S)\|.
\]

As in the finite case, the notions of spectral gap and diameter
are closely related:

\begin{lem}\label{lm_folkcomp}
Let $G$ be a compact connected semi-simple Lie group,
and let $1\in S\subset G$ be finite.
Then there is a constant $C>0$ depending only on $G$ such that
for any $\e>0$
\[
\diam_\e(G,S)\le \frac{C\log(\e^{-1})}{\gap_{C\e^{-C}}(G,S)}
\]
and for any $r\ge1$
\[
\gap_r(G,S)\ge\frac{1}{|S|\diam_{(Cr)^{-C}}(G,S)^2}.
\]
\end{lem}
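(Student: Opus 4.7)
I would prove the two inequalities independently. For the diameter-from-gap direction, fix a target $g\in G$ and a nonnegative $\phi_g\in C^\infty(G)$ supported in the $\e$-ball around $g$ with $\int\phi_g\,d\chi_G\gea\e^{\dim G}$. Peter--Weyl gives
\[
\int\phi_g\,d\chi_S^{*(l)}=\int\phi_g\,d\chi_G+\sum_{v\ne 0}\dim(\pi_v)\Tr\bigl(\pi_v(\chi_S)^l\,\wh\phi_g(v)\bigr).
\]
Repeated integration by parts against the Casimir operator (which acts on $\pi_v$ as a scalar comparable to $|v|^2$) yields the Sobolev decay $\|\wh\phi_g(v)\|_{\mathrm{HS}}\lea_N|v|^{-N}\e^{-O_N(1)}$ for every $N$. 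Choosing the truncation scale $r:=C\e^{-C}$ with $C$ large enough in terms of $\dim G$ makes the high-frequency tail $\ll\e^{\dim G}$; on the remaining low-frequency part we use $\|\pi_v(\chi_S)\|^l\le(1-\gap_r(G,S))^l\le e^{-l\gap_r(G,S)}$. Taking $l\cdot\gap_r(G,S)$ a sufficiently large multiple of $\log(\e^{-1})$ keeps $\int\phi_g\,d\chi_S^{*(l)}>0$, so $S^l$ meets $B(g,\e)$, whence $\diam_\e(G,S)\le l$.

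For the reverse inequality I would adapt the finite-group Poincar\'e argument underlying Lemma \ref{lm_folkfin}. Set $D:=\diam_{\e'}(G,S)$ with $\e':=(Cr)^{-C}$, fix $v$ with $0<|v|\le r$, and let $w$ be a unit top singular vector of $\pi_v(\chi_S)$ with singular value $\lambda$. Since $1\in S$ and $\chi_{S^{-1}}*\chi_S$ is symmetric,
\[
1-\lambda^2=\frac{1}{2|S|^2}\sum_{s,t\in S}\|\pi_v(s^{-1}t)w-w\|^2\ge\frac{1}{2|S|^2}\delta^2,
\]
where $\delta:=\max_{s\in S}\|\pi_v(s)w-w\|$ (take $t=1$). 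For any $s_1\cdots s_D\in S^D$, telescoping right-to-left and using unitarity of the partial products gives $\|\pi_v(s_1\cdots s_D)w-w\|\le D\delta$. Combining with the Lipschitz bound $\|\pi_v(g)-\pi_v(\tilde g)\|_{\mathrm{op}}\le C|v|\dist(g,\tilde g)\le Cr\e'$ and the $\e'$-density of $S^D$ yields $\|\pi_v(g)w-w\|\le D\delta+Cr\e'$ for every $g\in G$. Integrating against Haar and using Schur orthogonality (whence $\int_G\pi_v(g)\,d\chi_G=0$) gives $2=\int_G\|\pi_v(g)w-w\|^2\,d\chi_G\le(D\delta+Cr\e')^2$; choosing $C$ so that $Cr\e'\le 1/2$ forces $D\delta\gea 1$, hence $1-\lambda^2\gea 1/(|S|^2D^2)$. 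The sharper dependence $1/(|S|D^2)$ in the statement is recovered by the usual path-counting refinement, identical to the finite case.

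The main technical obstacle lies in part (i): making the Sobolev/Casimir decay estimate $\|\wh\phi_g(v)\|_{\mathrm{HS}}\lea_N|v|^{-N}\e^{-O_N(1)}$ fully quantitative in both $\e$ and $|v|$, so that the truncation scale $r=C\e^{-C}$ depends on nothing beyond $G$. Part (ii) is a conceptually direct transplant of the finite-case argument; the approximate $\e'$-covering by $S^D$ instead of exact coverage is absorbed by the polynomial smallness of $\e'$ in $r^{-1}$.
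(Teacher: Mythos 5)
Your overall strategy is sound, and for the first inequality you take a genuinely different route from the paper: you expand $\int\f_g\,d\chi_S^{*(l)}$ in the Peter--Weyl series of a smooth bump and kill the high frequencies by Casimir/Sobolev decay, truncating at $r=C\e^{-C}$. The paper instead avoids any tail estimate by building a band-limited approximate identity $f_r\in\HH_r$ directly (normalized powers $c_r(\chi+m)^{\lfloor r/r_0\rfloor}$ of a faithful real character, Lemma \ref{lm_approxid}), so that only the spectral gap on $|v|\le r$ is ever used; your version needs the quantitative dependence of the Fourier decay on $\e$, which is routine ($\|\Fou{\f_g}{v}\|\lea_k |v|^{-2k}\e^{-O(k)}$, $\dim\pi_v\lea|v|^{|R_+|}$, so a fixed $k$ and $C=C(G)$ suffice), but the paper's construction is the cleaner way to make the constant depend only on $G$. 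For the second inequality your argument is essentially the paper's, transported from the regular representation to the irreducible $\pi_v$: top singular vector in place of the extremal $f\in\HH_r$, the operator Lipschitz bound $\|\pi_v(g)-\pi_v(h)\|\le C|v|\dist(g,h)$ in place of Lemma \ref{lm_lip}, and Schur orthogonality in place of $\int\langle\Reg(g)f,f\rangle\,dm_G=0$; all of those steps are fine.

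The one genuine loose end is the $|S|$-dependence in part (ii). Your Dirichlet-form computation only gives $1-\l\gea \delta^2/|S|^2\gea 1/(|S|^2D^2)$, and the promised upgrade to $1/(|S|D^2)$ via ``the usual path-counting refinement, identical to the finite case'' does not transplant as stated: the Diaconis--Saloff-Coste comparison presupposes a symmetric generating set, whereas here $S$ is not assumed symmetric (the paper is careful about this elsewhere, symmetrizing via $\wt\mu*\mu$). The correct and easy fix is the pairing trick the paper uses, which needs only $1\in S$: having found a single letter $g_j$ of the word with $\delta_j:=\|\pi_v(g_j)w-w\|\ge c/D$, write
\[
\|\pi_v(\chi_S)w\|\le\frac{1}{|S|}\Bigl(\|\pi_v(g_j)w+w\|+(|S|-2)\Bigr)
\le 1-\frac{\delta_j^2}{4|S|},
\]
using $\|u+w\|^2=4-\|u-w\|^2$ for unit vectors. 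This recovers the stated $|S|^{-1}\diam^{-2}$ dependence (up to the absolute constant, which is all that is ever used); without this replacement your proof establishes the lemma only with $|S|^2$ in place of $|S|$.
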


This lemma is also well-known.
We give a proof in Section \ref{sc_lip} for completeness.

In Section \ref{sc_comp}, we develop an analogue
for compact Lie groups of the ideas
explained in the previous section.
A replacement for (\ref{eq_SX}) was given by Gamburd, Jacobson and
Sarnak \cite{GJS} and it appeared in various forms in
\cite{BGSU1} and \cite{BGSU2}, as well.
However, these are based on direct calculation with characters
rather than on multiplicities of eigenvalues.
A more direct analogue of (\ref{eq_SX}) and also of the results
of Gowers \cite{Gow} and Nikolov and Pyber \cite{NP} was
developed very recently by Saxc\'e \cite{dS}.

Our main result in the setting of compact Lie groups is the
following:

\begin{thm}\label{th_comp}
For every  semi-simple compact connected Lie group $G$,
there are numbers $c,r_0$ and $A$
such that the following holds.
Let $\mu$ be an arbitrary probability measure on $G$.
Then
\[
\gap_{r}(G,\mu)\ge c\cdot\gap_{r_0}(G,\mu)\cdot\log^{-A}r.
\]
For simple groups, the value of $A$ can be found in Table \ref{tbl_A}.
For semi-simple groups $A$ is the maximum of the corresponding
values over all simple quotients of $G$.
In particular, $A\le2$ for all groups.
\end{thm}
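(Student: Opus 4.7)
The plan is to mirror the inductive strategy of Theorem \ref{th_fin}, with the scale parameter $r$ playing the role of the index $[G:\Ga]$. The base case $r \le r_0$ is built into the statement of the theorem, so the task is an inductive step: from knowledge of $\gap_{r'}(G,\mu)$ with $r_0 \le r' < r$, derive a gap at scale $1/r$ losing only a factor polynomial in $\log r$. I would fix a highest weight $v \in I^* \cap \overline K$ with $|v| \le r$ and aim to bound $\|\pi_v(\mu)\|$ strictly away from $1$.

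The first ingredient is an analogue of (\ref{eq_SX}) in the Lie group setting. By the Peter--Weyl theorem, the regular representation of $G$ decomposes as $\bigoplus_w (\dim \pi_w)\pi_w$, so each eigenvalue of $\pi_v(\mu * \mu^*)$ appears with multiplicity at least $\dim \pi_v$ in this decomposition. Truncating to weights $|w|\le R$ for a carefully chosen cutoff $R\gtrsim r$ produces an inequality of the form
\[
\dim(\pi_v)\cdot\|\pi_v(\mu)\|^{2l} \;\le\; \sum_{|w|\le R}(\dim \pi_w)\cdot\Tr\bigl(\pi_w((\mu*\mu^*)^{*l})\bigr).
\]
Quasirandomness is now automatic: by Weyl's dimension formula, $\dim \pi_v$ grows polynomially in $|v|$ of degree equal to the number of positive roots of $G$.

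To control the right hand side, I would split the sum according to the size of $|w|$. Weights with $|w|\le r_0$ are handled by the assumed spectral gap $\gap_{r_0}(G,\mu)$, which gives exponential decay in $l$ of each term. For $r_0 < |w| \le R$, the natural substitute for the induction step of Theorem \ref{th_fin} is a character/trace estimate in the spirit of Gamburd--Jacobson--Sarnak or the recent product theorem of de Saxc\'e: rewriting $\Tr\pi_w(\nu)=\int\chi_w\,d\nu$ and bounding the integral by pointwise character bounds together with norm estimates on $\pi_u(\mu)$ for some $u$ with $|u|<|w|$. Feeding this back with $l \sim \log r / \gap_{r_0}(G,\mu)$ forces $\|\pi_v(\mu)\|^{2l} \le (\dim \pi_v)^{-\delta}$ for some positive $\delta$, which after taking logarithms yields the advertised bound $\gap_r(G,\mu) \gtrsim \gap_{r_0}(G,\mu)\cdot \log^{-A}r$.

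The main obstacle is the character/trace estimate. The trivial bound $\Tr\pi_w(\nu) \le \dim \pi_w$ is too weak, as the factor it contributes cancels the quasirandomness gain on the left. The substantive step is a saving of the form $\Tr\pi_w(\mu^{*(2l)}) \le (\dim\pi_w)^{1-\delta}$ with only polynomial loss in $|w|$; this is where one must exploit that $\mu^{*(2l)}$ has become spread out on the relevant scale. The value of $A$ for each simple group recorded in Table \ref{tbl_A} will then follow from balancing the Weyl dimension exponent of $G$ against the saving exponent $\delta$ and the cost of the truncation radius $R$, and a careful bookkeeping should yield $A \le 2$ in every case.
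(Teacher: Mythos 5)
There is a genuine gap, and it sits exactly where the real work of the paper lies. Your claim that ``quasirandomness is automatic'' because $\dim\pi_v$ grows like $|v|^{|R_+|}$ is false: Weyl's dimension formula gives $\dim\pi_v=\prod_{u\in R_+}\langle u,v+\rho\rangle/\langle u,\rho\rangle$, so the exponent $|R_+|$ is attained only for $v$ in a cone strictly inside the Weyl chamber, while for $v$ near a wall many factors stay bounded (e.g.\ in $\SU(n)$ the weights $v=k\omega_1$ have $\dim\pi_v\asymp|v|^{n-1}$, far below $|v|^{n(n-1)/2}$). These large-$|v|$, low-dimensional representations are precisely the ones for which the multiplicity gain in your analogue of (\ref{eq_SX}) is too weak, and handling them is where the specific exponents of Table \ref{tbl_A} come from: the paper groups the weights $|v|\le Cr_2$ into balls of radius $Dr_1$ around a separated net $u_i$, measures for each $u_i$ how many positive roots are nearly orthogonal to it (the quantity $N_i$), and balances the resulting dimension deficit against the count of net points near each face of the chamber via the root-system inequality of Lemma \ref{lm_roots} ($(|S|-|S'|)/(|R_+|-|R'|)\le A(G)-1$). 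Your proposed derivation of $A$ by ``balancing the Weyl dimension exponent against the saving exponent $\delta$ and the truncation radius'' does not capture this mechanism and would not reproduce the table, nor even a finite $A$ without treating the wall weights separately (compare Lemmata \ref{lm_fiest1} and \ref{lm_fiest2}).

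Second, the step you yourself flag as the main obstacle --- a saving $\Tr\,\pi_w(\mu^{*(2l)})\le(\dim\pi_w)^{1-\delta}$ for intermediate weights $r_0<|w|\le R$ --- is not supplied, and pointwise character bounds in the style of Gamburd--Jacobson--Sarnak are not how the paper gets it (indeed the introduction contrasts its method with those character computations). The paper's substitute is a multiplicity/tensoring argument run as an induction on the scale: the inductive quantity is $\chi_r(\mu^{*(l_r)})\le E$ with $\chi_r=(\sum_{|v|\le r}\chi_v)^2 r^{-\dim\L T}$, and the passage from scale $r_1$ to $r_2$ uses the auxiliary class functions $\psi_{u_i,r_1}=\chi_{r_1}\cdot\sum_{|u_i-w|<3Dr_1}\chi_w$, in which each $\chi_w$ with $|w-u_i|<Dr_1$ occurs with multiplicity $\gtrsim\dim(\chi_w)r_1^{\dim G}/\max\dim$ (Lemma \ref{lm_mult3}); since $\psi_{u_i,r_1}\preceq C\,(\max\dim)^2 r_1^{-2|R_+|}\chi_{r_1}$, the induction hypothesis at the smaller scale simultaneously bounds the traces and, through the multiplicity lower bound, the eigenvalues $\|\pi_w(\mu)\|$ for $|w|\le Cr_2$. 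Without an argument of this kind (or a genuine proof of your character saving with only logarithmic losses), your sketch does not close the induction: the trivial bound on the intermediate traces cancels the dimension gain, exactly as you note.
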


\begin{table}[ht]
\centering
\begin{tabular}{|c| c| c| c| c| c| c| c| c|}
\hline
$A_n$ & $B_n$ & $C_n$ & $D_n$ & $E_6$ &  $E_7$ & $E_8$ & $F_4$ & $G_2$\\
\hline
$1+\frac{2}{n+1}$ & $1+\frac{1}{n}$ & $1+\frac{1}{n}$ &
$1+\frac{1}{n-1}$ & $\frac{7}{6}$ & $\frac{10}{9}$ & $\frac{16}{15}$ &
$\frac{7}{6}$ & $\frac{4}{3}$\\
\hline
\end{tabular}
\caption{The value of $A(G)$ in terms of the Dynkin diagram}
\label{tbl_A}
\end{table}

A poly-logarithmic estimate for $\diam_\e(\SU(2),S)$
was found by Solovay and Kitaev independently.
Nice expositions are in the paper of Dawson and Nielsen
\cite[Theorem 1]{DN}
and in the book \cite[Chapter 8.3]{KSV}, where they obtain the
bound $\diam_\e(\SU(2),S)<\log^{3}\e^{-1}$.
Our theorem provides the same bound for the diameter of $\SU(2)$.
On the other hand, our spectral gap bound
in Theorem \ref{th_comp} beats anything that could be obtained from
a diameter bound via
Lemma \ref{lm_folkcomp}.

Dolgopyat \cite[Theorems A.2 and A.3]{Dol}
gave an estimate for the spectral
gap which is weaker than poly-logarithmic but his argument would
give a poly-log estimate without significant changes.
His proof consists of a version of the Solovay-Kitaev argument
(that he discovered independently)
and a variant of Lemma \ref{lm_folkcomp}.
The connection between the present paper and \cite{Dol}
was pointed out to me by Breuillard, see also his survey \cite{Bre}.

We note that Bourgain and Gamburd \cite[Corollary 1.1]{BGSU1},
\cite[Theorem 1]{BGSU2}
showed that when
$\mu=\mu_S$ for some finite set $1\in S\subset \SU(d)$ and the
entries of the elements of $S$ are algebraic numbers, then
\[
\gap_r(G,S)\ge c
\]
for some constant $c$ depending on $G$ and $S$.
Their argument is likely to carry over to arbitrary semi-simple
compact Lie groups, however the assumption on algebraicity
is essential for the proof.
It is a very interesting open problem whether this assumption can
be removed.
Moreover, we raise the following question:
Is it true that there are numbers $c,r_0$ depending only on $G$
such that
\[
\gap_r(G,\mu)\ge c\cdot\gap_{r_0}(G,\mu)
\]
for all probability measures $\mu$ on $G$?

Finally, we state a technical result which almost immediately follow
from Theorem \ref{th_comp}.
Its purpose is that this is the version used in the paper \cite{Var}
to study random walks in the group of Euclidean isometries.

For a measure $\nu$ on $G$,
we define the measure $\wt\nu$ by
\[
\int f(x)\igap d\wt\nu(x)=\int f(x^{-1})\igap d\nu(x)
\]
for all continuous functions $f$.
We write $m_G$ for the Haar measure on $G$.

\begin{cor}\label{cr_comp}
Let $G$ be a compact Lie group with semi-simple connected
component.
Let $\mu$ be a probability measure on $G$ such that
$\supp(\wt\mu*\mu)$ generates a dense subgroup in $G$.
Then there is a constant $c>0$ depending only on $\mu$ such that
the following holds.
Let $\f\in \Lip(G)$ be a function such that $\|\f\|_2=1$
and $\int \f\igap  dm_G=0$.
Then
\[
\lf\|\int\f(h^{-1}g)\igap d\mu(h)\r\|_2<1-c\log^{-A}(\|\f\|_\Lip+2),
\]
where $A$ depends on $G$ and is the same as in Theorem \ref{th_comp}.
\end{cor}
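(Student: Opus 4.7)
The plan is to combine Theorem~\ref{th_comp} with a frequency-cutoff argument that uses the Casimir operator to convert the Lipschitz hypothesis on $\f$ into Fourier-side decay. First, I would verify the prerequisite $\gap_{r_0}(G,\mu)>0$ needed to apply Theorem~\ref{th_comp}. Since $\|\pi_v(\mu)\|^2=\|\pi_v(\wt\mu*\mu)\|$, a unit vector attaining the norm would be fixed by every element of $\supp(\wt\mu*\mu)$ and hence, by density, by all of $G$, contradicting irreducibility of $\pi_v$ for $v\ne 0$; finiteness of $\{v\in I^*\cap\overline K:0<|v|\le r_0\}$ then supplies the uniform positive gap. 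If $G\ne G^\circ$, a standard Clifford-theoretic reduction, applied after replacing $\mu$ by $\mu^{*N}$ for a fixed $N$ large enough that the projection to the finite quotient $G/G^\circ$ is nearly uniform (its spectral gap on the finite group $G/G^\circ$ is positive because the image of $\supp(\wt\mu*\mu)$ generates it), reduces the statement to the connected semi-simple case, absorbing $|G/G^\circ|$ into the constant $c$; henceforth assume $G$ connected.

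Next, expand $\f$ via Peter--Weyl: $\f=\sum_{v\in I^*\cap\overline K\setminus\{0\}}\f_v$, where $\f_v$ is the orthogonal projection onto the $\pi_v$-isotypic subspace of $L^2(G)$; the zero-weight term vanishes because $\int \f\,dm_G=0$. On each $\pi_v$-isotypic component the left regular representation acts as $\pi_v(\mu)\otimes \mathrm{id}$, so
\[
\Big\|\int\f(h^{-1}g)\,d\mu(h)\Big\|_2^2=\sum_v\|\pi_v(\mu)\f_v\|_2^2\le\sum_v\|\pi_v(\mu)\|^2\,\|\f_v\|_2^2.
\]
Split the sum at a cutoff $r>r_0$ to be chosen. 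For $|v|\le r$ Theorem~\ref{th_comp} gives $\|\pi_v(\mu)\|\le 1-c\log^{-A}r$. For $|v|>r$ I would use the Casimir operator $\Omega$, which acts on the $\pi_v$-isotypic as multiplication by $\lambda_v=\langle v,v+2\rho\rangle\gea|v|^2$; since $\|\Omega^{1/2}\f\|_2$ coincides up to a constant with the $L^2$-norm of the gradient of $\f$, and the latter is bounded by $C_G\|\f\|_\Lip$ (via Rademacher and the finite volume of $G$), one gets
\[
\sum_{|v|>r}\|\f_v\|_2^2\lea\frac{\|\f\|_\Lip^2}{r^2}.
\]

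Finally, choose $r=\max(r_0,C_1(\|\f\|_\Lip+2))$ with $C_1$ large enough that the tail $T:=\sum_{|v|>r}\|\f_v\|_2^2\le 1/4$. Combining,
\[
\Big\|\int\f(h^{-1}g)\,d\mu(h)\Big\|_2^2\le(1-c\log^{-A}r)^2(1-T)+T\le 1-c'\log^{-A}(\|\f\|_\Lip+2),
\]
for a new constant $c'>0$ depending only on $\mu$, and taking a square root gives the stated bound after halving the constant. The main technical obstacle is the balancing of the cutoff $r$ so that the polynomial tail coming from the Lipschitz estimate is dominated by the logarithmic spectral gap provided by Theorem~\ref{th_comp}; the reduction from $G$ to its connected component is routine but requires some care so that $c$ continues to depend only on $\mu$.
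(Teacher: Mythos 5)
Your treatment of the connected case is correct, and it takes a genuinely different route from the paper's. You expand $\f$ by Peter--Weyl and use the Casimir/Laplacian to turn the Lipschitz hypothesis into the $L^2$ tail bound $\sum_{|v|>r}\|\f_v\|_2^2\lea\|\f\|_\Lip^2/r^2$, a Bernstein-type cutoff at $r\sim\|\f\|_\Lip$; the paper instead convolves $\f$ with an explicitly constructed approximate identity $f_r\in\HH_r$ (Lemma \ref{lm_approxid}, built from powers of a faithful character) and controls $\|f_r*\f-\f\|_\infty$ by a transport estimate, taking $r\sim(\|\f\|_\Lip+2)^4$. Both arguments then feed the same input, $\gap_r(G,\mu)\ge c\,\gap_{r_0}(G,\mu)\log^{-A}r$ from Theorem \ref{th_comp}, and since the cutoff enters only through its logarithm the two choices of $r$ give the same final bound. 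Your verification that $\gap_{r_0}(G,\mu)>0$ (no common fixed unit vector for $\supp(\wt\mu*\mu)$, finitely many $v$ with $0<|v|\le r_0$) is the same as the paper's, and your $L^2$ cutoff is arguably cleaner because it avoids constructing $f_r$ altogether.

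The weak point is the disconnected case, which you dismiss as a routine ``Clifford-theoretic reduction''; in the paper this is where most of the proof of Corollary \ref{cr_comp} lives, and as written your reduction is an assertion rather than an argument. Two concrete items are missing. First, to invoke Theorem \ref{th_comp} you need a measure on $G^{\circ}$ whose support generates a dense subgroup of $G^{\circ}$; the natural candidate is the restriction of a fixed convolution power of $\mu$ (or of $\wt\mu*\mu$) to $G^{\circ}$, and the density of the group generated by that restriction is exactly the paper's Lemma \ref{lm_connected}, proved by a pigeonhole-and-conjugation argument -- near-uniformity of the projected walk on the finite quotient $G/G^{\circ}$ does not by itself give it. Second, a mean-zero $\f$ on $G$ need not have mean zero on individual $G^{\circ}$-cosets, and its coset restrictions can carry very unequal $L^2$-mass, so after normalization their Lipschitz constants are no longer controlled by $\|\f\|_\Lip$; one must split off the component of $\f$ factoring through $G/G^{\circ}$ (where the gap is a constant depending only on $\mu$, by the generation hypothesis) and then work on a coset carrying a definite proportion of the remaining mass -- this is what the paper's argument does when it rules out $f$ being almost constant on cosets and locates $f_{i_0}$ with $\|\Reg(g_0)f_{i_0}-f_{i_0}\|_2\ge 1/\sqrt n$ -- and finally one must transfer the gap for the restricted measure $\mu_1$ on $G^{\circ}$ back to $\mu$ on $G$, e.g.\ via $\|\Reg(\mu)f\|_2^{2n-1}\le\|\Reg(\mu^{*(2n-1)})f\|_2$ and the decomposition of $\mu^{*(2n-1)}$ over cosets. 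These steps can indeed be carried out along the lines you gesture at, but they constitute the bulk of the work in the non-connected case and need to be supplied.
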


The rest of the paper is organized as follows.
In Section \ref{sc_fin} we give the proof of Theorem \ref{th_fin}
and Corollary \ref{cr_sld}.
The proof of Theorem \ref{th_comp} is given in Section \ref{sc_comp}.
Sections \ref{sc_fin} and \ref{sc_comp} are independent, but there
is a strong analogy between the two arguments.
Finally we prove Corollary \ref{cr_comp} and Lemma \ref{lm_folkcomp}
in Section \ref{sc_lip}.

Throughout the paper we use the letters $c$ and $C$ to denote
numbers which may depend on several other quantities and their
value may change at each occurrence.
We follow the convention that $c$ tends to denote numbers that
we consider ``small" and $C$ denotes those that we consider ``large".

\subsection{Motivation}\label{sc_motiv}

In recent years there was a lot of progress on the Babai-Seress
conjecture mentioned above, although it has not been settled yet.
In addition, poly-log spectral gap and diameter is known to
hold for some families of non-simple finite groups as well.
What the scope of this phenomenon is,
is an interesting question.
Our result on finite groups is a (very modest) step towards
understanding this.
In Section \ref{sc_remarks} we include some remarks on how to
exploit our approach for non-quasirandom groups.

Our main motivation for Theorem \ref{th_comp} is the application
in the paper \cite{Var}.
Although it seems easy to extend the Solovay-Kitaev approach to 
prove similar poly-log type bounds, we believe that our method
gives better exponents, at least for spectral gaps.

There are many recent applications of spectral gaps.
Many of these require stronger bounds than what we obtain in this
paper, e.g. the results in 
\cite{BG1}, \cite{BG2}, \cite{BG3},
\cite{BGS}, \cite{Var0}, \cite{BV}, \cite{SGV}
mentioned above.
However, for some applications, the poly-log type bounds are enough,
at least to obtain the same qualitative result.
Prominent examples are the work of Ellenberg, Hall and Kowalski
\cite{EHK},
the Group Large Sieve developed by Lubotzky and Meiri \cite{LM},
the study of curvatures in Apollonian Circle Packings by Bourgain
and Kontorovich \cite{BK} and the study of random walks on Euclidean
isometries by Varj\'u \cite{Var}.
However, our results are relevant only for
the last two of the above papers, because \cite{EHK} and \cite{LM}
requires spectral gaps only for products of two simple groups.
In addition, in the case of \cite{BK} a better uniform spectral gap is
available.

\noindent {\bf Acknowledgments.}
I thank Jean Bourgain for discussions related to this project,
in particular, for explaining to me the relation between the
diameter and the spectral gap.
I thank Nicolas de Saxc\'e for careful reading of my manuscript
and for his suggestions which greatly
improved the presentation of the paper.
I thank Emmanuel Breuillard for calling my attention
on the work of Dolgopyat \cite{Dol}.

\section{Profinite groups}
\label{sc_fin}

\subsection{Proof of Theorem \ref{th_fin}}\label{sc_thfin}

Recall that $G$ is a profinite group and $\O$ is the family of
finite index normal subgroups of it.
Assume that the hypothesis of the theorem is
satisfied.
We note that $\a\le1/2$, since $(\dim \pi)^2\le|G/\Ga|$ for any
irreducible representation $\pi$ of the group $G/\Ga$.
Consequently $A\ge 1$.

We first explain how to reduce to the case when
$\mu$ is symmetric.
Define the probability measure $\wt\mu$ by
\[
\int f(g) \igap d\wt\mu(g)=\int f(g^{-1}) \igap d\mu(g).
\]
(If $G$ is finite, this is can be expressed as $\wt\mu(g)=\mu(g^{-1})$.)
Clearly $\wt\mu*\mu$ is symmetric and
\[
\Rego_{G/\Ga}(\wt\mu*\mu)=\Rego_{G/\Ga}(\mu)^*\cdot\Rego_{G/\Ga}(\mu).
\]
Hence
\[
\|\Rego_{G/\Ga}(\wt\mu*\mu)\|=\|\Rego_{G/\Ga}(\mu)\|^2.
\]
This yields
\[
\gap({G/\Ga},\wt\mu*\mu)
\ge\gap({G/\Ga},\mu)
\ge\gap(G/\Ga,\wt\mu*\mu)/2.
\]

This shows that the spectral gap of $\wt\mu*\mu$ is roughly proportional
to that of $\mu$, hence it suffices to prove the theorem for
the first one.
{}From now on, we assume that $\mu$ is symmetric, hence the
operator $\Reg_{G/\Ga}(\mu)$ is selfadjoint and has an eigenbasis
with real eigenvalues.

We define
\[
l_\Ga=2\lfloor C_\Ga\log^{A+1}[G:\Ga]\rfloor,
\]
where
\[
C_\Gamma:=C_0(10-{\log^{-1/10}([G:\Ga])}),
\]
and
\[
C_0=\max_{\Ga\in\O_2}\frac{1}{\gap(G/\Ga,\mu)\cdot\log^{A}[G:\Ga]}.
\]
(The role of the subtracted term in the definition of $C_\Ga$
is simply to cancel lower order terms later.)

Recall that we denote by $\chi_{G/\Ga}$ the character of $\Reg_{G/\Ga}$.
Our goal is to prove
\be\label{eq_find}
\chi_{G/\Ga}\lf(\mu^{*(l_\Ga)}\r)\le M,
\ee
where $M\ge2$ is a suitably large number depending on $\a$ and $\b$.
Once we proved this, the claim of the theorem will be concluded easily.

The proof is by induction with respect to the partial order $\lhd$
on $\Ga\in\Omega$.
Suppose that (\ref{eq_find}) holds for all
$\Ga'\in\Omega$ with $\Ga\lhdneqq\Ga'$.

We distinguish two cases.
First we suppose that $\Ga\in\O_2$.
Note that $\chi_{G/\Ga}(\mu^{*(l_\Ga)})$ is the trace
of the operator $\Reg_{G/\Ga}(\mu^{*(l_\Ga)})$, hence it is
the sum of its eigenvalues.
The non-trivial eigenvalues are bounded by
$e^{-l_\Ga\cdot\gap(G/\Ga,\mu)}$,
hence
\bean
\chi_{G/\Ga}\lf(\mu^{*(l_\Ga)}\r)&\le& 1+[G:\Ga]\cdot
e^{-l_\Ga\cdot\gap(G/\Ga,\mu)}\\
&\le&1+[G:\Ga]\cdot e^{-C_0\log^{A+1}[G:\Ga]\cdot\gap(G/\Ga,\mu)}\le 1+1
\eean
using the definitions of $l_\Ga$, $C_\Ga$ and $C_0$. 
Hence (\ref{eq_find}) follows.

Now we suppose that $\Ga\notin\O_2$, hence $\Ga\in\O_1$, in particular
$[G:\Ga]\ge C_1$, where $C_1$ can be taken as large as we need
depending on $\a,\b,c_1,A$.
Let $\Ga'$ be such that
$\Ga\lhd\Ga'\in\O$
and $[\Ga':\Ga]$
is minimal but larger than 1.
Then by assumption, $[\Ga':\Ga]\le[G:\Ga']^\b$.

Since
$\chi_{G/\Ga}$ is pointwise majorized by the function
$[\Ga':\Ga]\cdot\chi_{G/\Ga'}$, and $\mu^{*(l_\Ga')}$
is a positive measure, we have
\be\label{eq_oldbound}
\chi_{G/\Ga}\lf(\mu^{*(l_{\Ga'})}\r)\le
[\Ga':\Ga]\cdot\chi_{G/\Ga}\lf(\mu^{*(l_{\Ga'})}\r)\le M[\Ga':\Ga].
\ee
We applied (\ref{eq_find}) for $\Ga'$ in the second inequality.

Denote by $\l_0=1,\l_1,\ldots \l_k$ the eigenvalues of the operator
$\Reg_{G/\Ga}(\mu)$ each listed as many times as its multiplicity.
Then
\bea
\chi_{G/\Ga}\lf(\mu^{*(l_\Ga)}\r)&=&1+\sum_{i=1}^k\l_i^{l_\Ga}
\le1+\lf(\sum_{i=1}^k\l_i^{l_{\Ga'}}\r)\cdot\max\{\l_i\}^{l_{\Ga}-l_{\Ga'}}
\nonumber\\
&\le&1+M[\Ga':\Ga]\max\{\l_i\}^{l_{\Ga}-l_{\Ga'}}.
\label{eq_newbound}
\eea
We applied (\ref{eq_oldbound}) in the last line.
Also note, that all terms are positive because $l_{\Ga}$ is even by
construction.

Our next goal is to obtain a sufficient bound on the $\l_i$.
This can be deduced from (\ref{eq_oldbound}) and the
assumption about the dimension of faithful representations.
The representation $\Reg(G/\Ga)$ can be decomposed
as the orthogonal sum
of irreducible subrepresentations.
Each irreducible representation occur with multiplicity equal to
its dimension.

Consider now an eigenvalue $\l_i$.
There is a corresponding eigenvector which
is contained in an irreducible subrepresentation of $\Reg(G/\Ga)$.
Denote this representation by $\pi$.
Write
$\Ga''=\Ker(\pi)$.

First we consider the case that $\Ga''=\Ga$.
It follows that $\l_i^{l_{\Ga'}}$ is an eigenvalue of
$\Reg_{G/\Ga}(\mu^{*(l_{\Ga'})})$ with multiplicity at least
\[
\dim\pi\ge c_1[G:\Ga]^{\a},
\]
(by the assumption on quasirandomness).
Hence by (\ref{eq_oldbound}) we can conclude that
\[
\l_i^{l_{\Ga'}}\le
\frac{M[\Ga':\Ga]}{c_1[G:\Ga]^{\a}}.
\]

Next, we consider the case when $\Ga''\neq\Ga$.
By the assumption on the minimality of $[\Ga':\Ga]$, we have
$[G:\Ga'']\le[G:\Ga']$.
We apply (\ref{eq_find}) for $\Ga''$
along with the bound for the multiplicity of eigenvalues and get
\[
\l_i^{l_{\Ga''}}\le
\frac{M}{c_1[G:\Ga'']^{\a}}.
\]
An easy calculation shows that the bound we obtain for $|\l_i|$
is worsening when $[G:\Ga'']$ grows.

Thus in both cases we obtain:
\be\label{eq_lmax}
\l_i^{l_{\Ga'}}\le
\frac{M[\Ga':\Ga]}{c_1[G:\Ga']^{\a}}.
\ee

We plug this into (\ref{eq_newbound}) and we want to conclude
$\chi_{G/\Ga}(\mu^{*(l_\Ga)})\le M$.
To this end, we need
\[
M[\Ga':\Ga]\lf(\frac{M[\Ga':\Ga]}
{c_1[G:\Ga']^{\a}}\r)^{(l_\Ga-l_{\Ga'})/l_{\Ga'}}
\le M-1
\]
that is
\be\label{eq_wanted}
\frac{M}{M-1}[\Ga':\Ga]\le\lf(\frac{c_1[G:\Ga']^{\a}}
{M[\Ga':\Ga]}\r)^{(l_\Ga-l_{\Ga'})/l_{\Ga'}}.
\ee

For simplicity, we write $X=\log[G:\Ga']$ and $Y=\log[\Ga':\Ga]$.
Then by the definition of $l_\Ga$:
\bea
\frac{l_\Ga-l_{\Ga'}}{l_{\Ga'}}
&=&\frac{2\lfloor C_\Ga(X+Y)^{A+1}\rfloor-2\lfloor C_{\Ga'}X^{A+1}\rfloor}
{2\lfloor C_{\Ga'}X^{A+1}\rfloor}\nonumber\\
&\ge& \frac{C_\Ga(X+Y)^{A+1}-C_{\Ga'}X^{A+1}-1}{C_{\Ga'}X^{A+1}}
\nonumber\\
&\ge&
\frac{C_{\Ga'}(X+Y)^{A+1}-C_{\Ga'}X^{A+1}}{C_{\Ga'}X^{A+1}}
+\frac{C_\Ga-C_{\Ga'}}{C_{\Ga'}}-\frac{1}{C_{\Ga'}X^{A+1}}
\nonumber\\
&\ge& (A+1)\frac{Y}{X}+c_2\frac{Y}{X^{11/10}},\label{eq_lcomput}
\eea
where $c_2$ is an absolute constant if $X$ is larger than an absolute
constant.
(Using the definition of $C_\Ga$, we evaluate
$(C_\Ga-C_{\Ga'})/C_{\Ga'}$ and get the second term
of (\ref{eq_lcomput}).
Notice that this is of larger order
of magnitude
than $1/X^{A+1}$.)

Then the logarithm of the right hand side of (\ref{eq_wanted})
is at least
\bean
&&\lf((A+1)\frac{Y}{X}+c_2\frac{Y}{X^{11/10}}\r)
(\a X-\log (M/c_1)-Y)\\
&&\qquad\quad\ge \a(A+1)Y-\log(M/c_1)(A+1)\frac{Y}{X}-(A+1)\frac{Y^2}{X}
+c_3\frac{Y}{X^{1/10}}\\
&&\qquad\quad\ge \a(A+1)Y-(A+1)\frac{Y^2}{X}
+c_3\frac{Y}{2X^{1/10}}
\eean
if $X$ is sufficiently large depending on $\a,\b,c_1,A$.
Here $c_3$ is a sufficiently small constant depending on $\a$ and $\b$
satisfying $c_3 X\le c_2(\a X-\log (M/c_1)-Y)$.
(Recall that $Y\le\b X$ by assumption.)

To get (\ref{eq_wanted}), we need
\[
\log(M/(M-1))+Y\le \a(A+1)Y-(A+1)\frac{Y^2}{X}
+c_3\frac{Y}{2X^{1/10}}
\]
that is
\be\label{eq_final}
A+1\ge\frac{1+\log(M/(M-1))/Y-c_4X^{-1/10}}{\a-Y/X+c_4X^{-1/10}},
\ee
where $c_4$ is yet another number depending on $\a,\b,c_1,A$.

We consider two cases.
If $Y\le\sqrt{X}$ (and $X$ is sufficiently large) then
$Y/X\le c_4X^{-1/10}$.
Clearly $Y\ge\log 2$, hence (\ref{eq_final}) holds if
\[
A+1\ge \frac{1+\log(M/(M-1))/\log 2}{\a}.
\]

On the other hand if $Y\ge\sqrt{X}$
(and $X$ is sufficiently large) then
$\log(M/(M-1))/Y\le c_4X^{-1/10}$.
Recall that $Y/X\le \b$ by assumption, hence
(\ref{eq_final}) holds if $A+1\ge(\a-\b)^{-1}$.

By choosing $M$ sufficiently large depending on $\a,\b$, we can ensure
that
\[
\frac{1+\log(M/(M-1))/\log 2}{\a}\le \frac{1}{\a-\b}.
\]
Thus (\ref{eq_final}) holds in either case if $A+1\ge(\a-\b)^{-1}$,
which was assumed in the theorem.
This completes the induction to prove (\ref{eq_find}).

Let $\l$ be an eigenvalue of $\Rego_{G/\Ga}(\mu)$.
We want to show that
\[
|\l|<1-C_1^{-1}B\cdot\log^{-A}[G:\Ga].
\]
Denote by $\pi$ an irreducible representation of $G/\Ga$ that contains
an eigenvector corresponding to $\l$.
Without loss of generality, we can assume that $\Ga=\Ker(\pi)$.
{}From quasirandomness and (\ref{eq_find}) we get
\[
\l^{l_\Ga}\le\frac{M}{c_1[G:\Ga]^\a}.
\]
Thus
\[
\log|\l|\le\frac{\log{M/c_1}-\a\log[G:\Ga]}{l_\Ga}.
\]
If we compare this with the definition of $l_\Ga$ we can conclude the
theorem.

\subsection{Some remarks about weakening the hypothesis on quasirandomness}
\label{sc_remarks}

In this section we present some ideas that lead to a refined
version of (\ref{eq_newbound}) and (\ref{eq_lmax}).
Using this, one could obtain a version of
Theorem \ref{th_fin} with a weaker hypothesis instead of
quasirandomness.
Namely one would require that the quotient groups does not have
``many" irreducible representations with ``small" dimension.
This weaker form of quasirandomness would be very technical
hence we do not state a theorem.
However, (based on analogy with compact Lie groups) it seems
possible that these ideas lead to better bounds for the
groups $\SL_d(\Z_p)$ than Theorem \ref{th_fin}.

\begin{prp}\label{pr_refined}
Let $G$ be a finite group and $N$ a normal subgroup.
Let $\rho_1,\ldots,\rho_n$ be all irreducible representations of $N$
up to isomorphism.
Denote by $a(\rho_j)$ the number of $G$-conjugates of $\rho_j$.
Denote by $d(\rho_j)$ the smallest possible dimension of a
representation
of $G$ whose restriction to $N$ contains $\rho_j$.

Let $\mu$ be a symmetric probability measure on $G$ and suppose that
\be\label{eq_indhyp}
\chi_{G/N}\lf(\mu^{*(2l)}\r)\le M
\ee
for some numbers $l$ and $M>1$.

Then for all integers $l'>l$ we have:
\be\label{eq_refined}
\chi_G\lf(\mu^{*(2l')}\r)
\le\sum_{j=1}^{n}\lf[\dim(\rho_j)^2M
\cdot\lf(\frac{a(\rho_j)\dim(\rho_j)^2M}{d(\rho_j)}\r)^{(l'-l)/l}\r].
\ee
\end{prp}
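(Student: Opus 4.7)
The plan is to decompose $L^2(G)$ into $G$-invariant summands indexed by the $G$-orbits on $\wh N$ (the unitary dual of $N$), and to bound the trace of $\Reg_G(\mu^{*(2l')})$ on each summand separately. For a $G$-orbit $\mathcal{O}\subset\wh N$, let $V_\mathcal{O}\subset L^2(G)$ denote the direct sum over $\rho\in\mathcal{O}$ of the $\rho$-isotypic subspaces under the left $N$-action. Because $G$ permutes the $N$-isotypic components according to its action on $\wh N$, each $V_\mathcal{O}$ is $G$-invariant; by Clifford's theorem, for any irreducible $\pi$ of $G$ the irreducibles of $N$ occurring in $\pi|_N$ form a single $G$-orbit $\mathcal{O}(\pi)$, so $V_\mathcal{O}\cong\bigoplus_{\mathcal{O}(\pi)=\mathcal{O}} V_\pi^{\oplus\dim\pi}$ as a $G$-module. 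Setting $T_\mathcal{O}:=\Tr(\Reg_G(\mu^{*(2l)})|_{V_\mathcal{O}})$ and $\Lambda_\mathcal{O}:=\|\Reg_G(\mu)|_{V_\mathcal{O}}\|$, self-adjointness (from $\mu$ symmetric) gives
\[
\chi_G(\mu^{*(2l')})=\sum_\mathcal{O}\Tr\bigl(\Reg_G(\mu)^{2l'}|_{V_\mathcal{O}}\bigr)\le\sum_\mathcal{O}T_\mathcal{O}\cdot\Lambda_\mathcal{O}^{2(l'-l)}.
\]

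The main step is to bound $T_\mathcal{O}$ via the hypothesis on $\chi_{G/N}$. For $\rho\in\wh N$ the standard character projection $Q_\rho=(\dim\rho/|N|)\sum_{n\in N}\overline{\chi_\rho(n)}\Reg_G(n)$ onto the $\rho$-isotypic, together with the identity $\Tr(\Reg_G(g))=|G|\delta_1(g)$, gives after a short calculation
\[
\Tr\bigl(Q_\rho\Reg_G(\mu^{*(2l)})\bigr)=(\dim\rho)[G:N]\sum_{n\in N}\chi_\rho(n)\,\mu^{*(2l)}(n).
\]
Combining $|\chi_\rho(n)|\le\dim\rho$ with the hypothesis in the form $\sum_{n\in N}\mu^{*(2l)}(n)=\chi_{G/N}(\mu^{*(2l)})/[G:N]\le M/[G:N]$ bounds each such summand by $(\dim\rho)^2 M$; summing over the $a(\rho_j)$ isomorphism classes in the orbit through $\rho_j$ yields $T_\mathcal{O}\le a(\rho_j)(\dim\rho_j)^2 M$.

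For $\Lambda_\mathcal{O}$, a top eigenvalue of the self-adjoint operator $\Reg_G(\mu)|_{V_\mathcal{O}}$ has an eigenvector in some $V_\pi$ with $\mathcal{O}(\pi)=\mathcal{O}$, and $V_\pi$ enters $V_\mathcal{O}$ with multiplicity $\dim\pi\ge d(\rho_j)$; this is the direct analogue of (\ref{eq_SX}) in the present setting. Hence that eigenvalue occurs with multiplicity at least $d(\rho_j)$ in $V_\mathcal{O}$, so $d(\rho_j)\,\Lambda_\mathcal{O}^{2l}\le T_\mathcal{O}$. Combining the two estimates,
\[
\Tr\bigl(\Reg_G(\mu)^{2l'}|_{V_\mathcal{O}}\bigr)\le a(\rho_j)(\dim\rho_j)^2 M\cdot\lf(\frac{a(\rho_j)(\dim\rho_j)^2 M}{d(\rho_j)}\r)^{(l'-l)/l}.
\]
Summing over orbits reproduces (\ref{eq_refined}): the per-orbit bound above equals $a(\rho_j)(\dim\rho_j)^2 M\cdot(\cdots)^{(l'-l)/l}$, while on the right-hand side of (\ref{eq_refined}) the same orbit groups into $a(\rho_j)$ identical summands $(\dim\rho_j)^2 M\cdot(\cdots)^{(l'-l)/l}$, so the two totals agree orbit-by-orbit. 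The only genuinely non-routine ingredient is the character-theoretic identity producing the bound $T_\mathcal{O}\le a(\rho_j)(\dim\rho_j)^2 M$, where the hypothesis enters; the multiplicity estimate for $\Lambda_\mathcal{O}$ is the direct analogue of the argument in Section \ref{sc_thfin}.
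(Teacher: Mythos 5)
Your proof is correct and follows essentially the same route as the paper's: a Clifford-theoretic grouping of $L^2(G)$ by $G$-orbits on the dual of $N$, a bound of the form $a(\rho_j)\dim(\rho_j)^2M$ on each orbit block's trace at time $2l$ coming from the hypothesis on $\chi_{G/N}$, the multiplicity bound $\dim\pi\ge d(\rho_j)$ to control the top eigenvalue, and interpolation to time $2l'$. The only cosmetic difference is that you derive the block-trace bound directly from the isotypic projections $Q_\rho$ and the trace identity for the regular representation, whereas the paper obtains the identical estimate (its (\ref{eq_piest3})) by majorizing the class function $\wt\f=\sum_i\chi_{\rho^{g_i}}$, extended by zero off $N$, against $\chi_{G/N}$; your orbit-by-orbit bookkeeping at the end, which distributes the per-orbit bound over the $a(\rho_j)$ conjugate classes, is a correct (and slightly more explicit) rendering of the paper's final summation.
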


Observe that $\sum\dim(\rho_j)^2=|N|$
and $a(\rho_j)\dim(\rho_j)^2\le|N|$, hence we obtain
(\ref{eq_newbound}) combined with (\ref{eq_lmax}), if we estimate
$d(\rho_j)$ using quasirandomness.
If $d(\rho_j)$ is small only for a very few $j$, Proposition
\ref{pr_refined} significantly improves the argument given
in the previous section.

\begin{proof}
We recall some facts from representation theory.
Let $\pi$ be an irreducible representation of $G$,
and denote by $\chi_\pi$ its character.
Denote by $\pi|_N$ the restriction of $\pi$ to $N$.
By Clifford's theorem, the irreducible components of $\pi|_N$
is a $G$-conjugacy class of representations
and each appears with the same multiplicity.
We denote by $a(\pi)$ the number of different irreducible components
of $\pi|_N$, by $b(\pi)$ their common multiplicity and by $c(\pi)$
their common dimension.
Thus $\dim(\pi)=a(\pi)\cdot b(\pi)\cdot c(\pi)$.

Denote  one of the irreducible components of $\pi|_N$ by $\rho$
and  its character by $\chi_\rho$.
Let $\rho^{g_1},\cdots,\rho^{g_{a(\pi)}}$ be all $G$-conjugates
of $\rho$.
In what follows, the function
\[
\f=\sum_{i=1}^{a(\pi)}\chi_{\rho^{g_i}}
\]
will play an important role.
We also extend it to $G$ by setting it $0$ in the complement of $N$,
i.e. we write $\wt\f(g)=\f(g)$ for $g\in N$ and $\wt \f(g)=0$
otherwise.

We use two inner products, one on $L^2(G)$ and one on $L^2(N)$ defined by:
\[
\langle f_1,f_2\rangle_G=\frac{1}{|G|}\sum_{g\in G}f_1(g)f_2(g),
\quad
\langle f_1,f_2\rangle_N=\frac{1}{|N|}\sum_{g\in N}f_1(g)f_2(g).
\]
These are the inner products with respect to which the irreducible
characters of the corresponding groups are orthonormal.

We write:
\[
\langle \chi_\pi,\wt\f\rangle_G
=\frac{|N|}{|G|}\langle\chi_{\pi|_N},\f\rangle_N
=\frac{|N|a(\pi)b(\pi)}{|G|}.
\]
A similar calculation shows that the inner product of $\wt\f$
with an irreducible character of $G$ is always non-negative.

Since $\wt\f$ is a class function on $G$, it can be decomposed as
a linear combination of irreducible characters.
According to the above calculation, the coefficient of $\chi_\pi$
is $|N|a(\pi)b(\pi)/|G|$ and all other characters have non-negative
contribution.
Thus
\be\label{eq_piest1}
\chi_{\pi}\lf(\mu^{*(2l)}\r)
\le\frac{|G|}{|N|a(\pi)b(\pi)}\wt\f\lf(\mu^{*(2l)}\r).
\ee
(Note that $\chi_{\pi}(\mu^{*(2l)})\ge0$ being the trace of a positive
operator.)

On the other hand,
\[
\|\wt\f\|_\infty\le\sum_{i=1}^{a(\pi)}\|\chi_{\rho^{g_i}}\|_\infty=
a(\pi)c(\pi).
\]
Thus for every $g\in G$, we have
\[
|\wt\f(g)|\le\frac{|N|a(\pi)c(\pi)}{|G|}\chi_{G/N}(g).
\]
(Note that for $g\notin N$ both sides are 0.)
Therefore
\be\label{eq_piest2}
\wt\f\lf(\mu^{*(2l)}\r)
\le\frac{|N|a(\pi)c(\pi)}{|G|}\chi_{G/N}\lf(\mu^{*(2l)}\r).
\ee

We combine (\ref{eq_piest1}) and (\ref{eq_piest2}) and get
\[
\chi_{\pi}\lf(\mu^{*(2l)}\r)
\le\frac{c(\pi)}{b(\pi)}\chi_{G/N}\lf(\mu^{*(2l)}\r).
\]
This implies that for all eigenvalues $\l$ of $\pi(\mu^{*(2l)})$,
we have
\be\label{eq_lambdaest}
|\l|<\frac{c(\pi)}{b(\pi)}M=\frac{a(\pi)c(\pi)^2}{\dim(\pi)}M.
\ee
(Here we also used the hypothesis (\ref{eq_indhyp}).)

Now let $\pi_1,\pi_2,\ldots,\pi_k$ denote all the
irreducible representations (up to isomorphism) of $G$ whose
restriction to $N$ contain $\rho$.
By a calculation very similar to the one leading to (\ref{eq_piest1})
we get
\[
\sum_{i=1}^k\frac{|N|a(\pi_i)b(\pi_i)}{|G|}\chi_{\pi_i}\lf(\mu^{*(2l)}\r)
\le\wt\f\lf(\mu^{*(2l)}\r).
\]
Combining with (\ref{eq_piest2}), we get
\[
\sum_{i=1}^k\frac{b(\pi_i)}{c(\pi_i)}\chi_{\pi_i}\lf(\mu^{*(2l)}\r)
\le\chi_{G/N}\lf(\mu^{*(2l)}\r)\le M.
\]
(We used again the hypothesis (\ref{eq_indhyp}).)
Multiplying by $a(\pi)c(\pi)^2=a(\pi_i)c(\pi_i)^2$
(which is independent of $i$)
we get
\be\label{eq_piest3}
\sum_{i=1}^k\dim(\pi_i)\chi_{\pi_i}\lf(\mu^{*(2l)}\r)
\le a(\pi)c(\pi)^2M.
\ee

We use (\ref{eq_lambdaest}) and write for an integer $l'\ge l$:
\be\label{eq_piest4}
\sum_{i=1}^k\dim(\pi_i)\chi_{\pi_i}\lf(\mu^{*(2l')}\r)
\le a(\pi)c(\pi)^2M\lf(\frac{a(\pi)c(\pi)^2M}{\dim(\pi)}\r)^{(l'-l)/l}.
\ee
We sum (\ref{eq_piest4}) for $\rho=\rho_1,\ldots,\rho_n$ and get
(\ref{eq_refined}).
\end{proof}

\subsection{Proof of Corollary \ref{cr_sld}}
\label{sc_sld}

We first discuss quasirandomness.
This was already proved for $\SL_d(\wh\Z)$ by Bourgain and Varj\'u
\cite{BV}.
In fact, it is easy to deduce it from the corresponding
result about $\SL_d(\Z_p)$ which was obtained by
Bourgain and Gamburd
\cite[Lemma 7.1]{BG2} for $d=2$ and by Saxc\'e \cite[Lemme 5.1]{dS}
for $d\ge 2$.
For completeness, we explain this deduction.

For an integer $q>0$ denote by
\[
\Ga_q=\{g\in \SL_d(\wh\Z): g\equiv 1\; (\mod\, q)\}
\]
the mod $q$  congruence subgroup of $\SL_d(\wh\Z)$.

Let $p$ be a prime, $k\ge 1$  be an integer,
and let $\pi$ be a representation of $\SL_d(\wh\Z)/\Ga_{p^{k}}$
which is not a representation of $\SL_d(\wh \Z)/\Ga_{p^{k-1}}$.
By \cite[Lemme 5.1]{dS}, $\pi$ is of dimension
at least
\[
c\cdot[\SL_d(\wh\Z):\Ga_{p^k}]^{1/(d+1)},
\]
where
$c>0$ is a number depending on $d$.
For any $\e>0$, we can replace this bound by
$|\SL_d(\wh\Z)/\Ga_q|^{1/(d+1)-\e}$
if $p$ is sufficiently large
depending on $\e$.

Let $\pi$ be an irreducible unitary representation of $\SL_d(\wh \Z)$.
Since $\Ga_q$, $q>1$ form a system of neighborhoods of $1$ in 
$\SL_d(\wh \Z)$, there is $q$ such that $\Ker(\pi)\supset\Ga_q$.
Let $q$ be minimal with this property.
Let $q=p_1^{k_1}\cdots p_n^{k_n}$ such that $p_i$ are primes.
Then
\[
\SL_d(\wh\Z)/\Ga_q=
\lf(\SL_d(\wh\Z)/\Ga_{p_1^{k_1}}\r)\times\cdots
\times\lf(\SL_d(\wh\Z)/\Ga_{p_n^{k_n}}\r).
\]
Any representation of this group is a tensor product of representations
of $\SL_d(\wh\Z)/\Ga_{p_i^{k_i}}$.
Hence
\be\label{eq_QR}
\dim\pi\ge c^m[\SL_d(\wh\Z):\Ga_{p^k}]^{1/(d+1)-\e},
\ee
where $m$ is the number of not large
enough primes in the sense of the previous paragraph.
Thus $\SL_d(\wh\Z)$ is $(c^m,1/(d+1)-\e)$-quasirandom.

We refer the interested reader to the paper of Kelmer and
Silberman \cite[Section 4]{KS}, where
quasirandomness is proved
with optimal parameter $\a$ for some other arithmetic groups.

We define $\O_1$ and $\O_2$.
We fix an integer $M$ that we will set later depending on $d$.
Let $\Ga\lhd\SL_d(\wh\Z)$ be a finite index normal subgroup.
Denote by $q$ the smallest integer such that $\Ga_q\lhd\Ga$.
We put $\Ga$ in $\O_1$, if $q$ has at least $M+1$ prime factors
(taking multiplicities into account) and $q$ is sufficiently large.
We put $\Ga$ in $\O_2$ otherwise.

Let $\Ga\in\O_1$ and let $q$ be the same as above.
Let $p|q$ be the smallest prime divisor of $q$.
By simple calculation:
\[
[\Ga_{q/p}:\Ga_q]\le[\SL_d(\wh \Z):\Ga_{q/p}]^{1/M}.
\]
Now we define $\Ga':=\Ga_{q/p}\Ga$.
Clearly
\[
[\Ga':\Ga]\le[\Ga_{q/p}:\Ga_q],
\]
so we only need to estimate $[\SL_d(\wh \Z):\Ga']$
in terms of $[\SL_d(\wh \Z):\Ga_{q/p}]$.
For our purposes the very crude bound
\[
[\SL_d(\wh \Z):\Ga']\ge c^m[\SL_d(\wh \Z):\Ga_{q/p}]^{1/(d+1)-\e}
\]
is sufficient which follows from (\ref{eq_QR}).
This implies
\[
[\Ga':\Ga]\le [\SL_d(\wh \Z):\Ga']^{(d+2)/M}
\]
if $q$ is sufficiently large (and $\e$ is sufficiently small).

We choose $\a$ and $\b$ in such a way that $\b<\a<1/(d+1)$.
Then the quasirandomness is satisfied, and also
(\ref{eq_conditioninduction}) if we set $M\ge(d+2)/\b$.

It is left to verify (\ref{eq_BS}) for $\mu=\mu_S$.
First we note that by the same argument as at the beginning of
the proof of Theorem \ref{th_fin}, we can assume that $S$ is symmetric.
We show that the groups $\SL_d(\wh \Z)/\Ga_q$ for $\Ga_q\in\O_2$
have poly-logarithmic diameter with respect to any generating set $S$.
In light of Lemma \ref{lm_folkfin} this implies (\ref{eq_BS}).

Let now $\Ga_q\in\O_2$. 
There are two possibilities.
If $q$ is small (e.g. $q\le C_1$ or as in the definition of $\O_1$),
then we have the trivial bound
\[
\diam(\SL_d(\wh \Z)/\Ga_q,S)\le|\SL_d(\wh \Z)/\Ga_q|
\le C\log^A(\SL_d(\wh \Z)/\Ga_q)
\]
for some suitably large constant $C$.

The other situation that may happen is that $q$ contains at most $M$
prime factors counting multiplicities.
In this case we can easily deduce the poly-log diameter bound from
\cite[Theorem 7.1]{BGT} and \cite[Theorem 2]{PSz} which
contain this result in the case when
$q$ is prime.
This deduction is very similar to \cite[Proof of Proposition 3]{BV}.

Let $q_0=1,q_1,q_2,\ldots,q_n=q$ be a sequence of integers such that
$q_{i+1}/q_i$ is a prime number for all $i$.
We will apply the following lemma repeatedly to prove the diameter
bound we are looking for.

\begin{lem}\label{lm_diam}
Fix $i$ and write $p=q_i/q_{i-1}$.
Then
\[
\diam(\SL_d(\Z/{q_i}\Z),S)
\le C(\diam(\SL_d(\Z/q_{i-1}\Z),S)+\diam(\SL_d(\Z/p\Z),S)),
\]
where $C$ is a number depending on $d$.
\end{lem}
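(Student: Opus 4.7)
The plan is to exhibit any $g\in\SL_d(\Z/q_i\Z)$ as a word of length at most $C(L_1+L_2)$ in $S$, where $L_1=\diam(\SL_d(\Z/q_{i-1}\Z),S)$, $L_2=\diam(\SL_d(\F_p),S)$, and $C$ depends only on $d$. The essential feature is that the dependence is additive, since this is what allows the overall induction over the prime factors of $q$ to produce a polylogarithmic diameter.

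I would distinguish two cases according to whether $\gcd(p,q_{i-1})=1$ or $p\mid q_{i-1}$, because the kernel $K$ of the reduction $\pi\colon\SL_d(\Z/q_i\Z)\to\SL_d(\Z/q_{i-1}\Z)$ has different structure. In the coprime case, CRT gives $\SL_d(\Z/q_i\Z)\cong\SL_d(\Z/q_{i-1}\Z)\times\SL_d(\F_p)$ and $K\cong\SL_d(\F_p)$. In the other case, $K$ consists of matrices $I+q_{i-1}A$ with $\operatorname{tr}A\equiv 0\pmod p$, so $K\cong\fsl_d(\F_p)$ as an abelian group, with $\SL_d(\F_p)$ acting through the adjoint representation.

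In either case the first step is to use the given diameter bound on $\SL_d(\Z/q_{i-1}\Z)$: lift a length-$L_1$ word expressing $\pi(g)$ to some $\tilde w\in S^{L_1}$, so that $g\tilde w^{-1}\in K$. It then suffices to show that every element of $K$ can be written as a word of length $O(L_1+L_2)$ in $S$. For this I would invoke the bounded commutator width of $\SL_d$: every element of $\SL_d(\F_p)$ is a product of $N=N(d)$ commutators. To produce a commutator $[\alpha,\beta]$ lying in $K$, write $\alpha,\beta\in\SL_d(\F_p)$ as length-$L_2$ words in $\phi_2(S)$ and lift them to $\tilde\alpha,\tilde\beta\in S^{L_2}$; then right-multiply by length-$L_1$ corrections $u,v\in S^{L_1}$ with $\pi(u)=\pi(\tilde\alpha)^{-1}$ and $\pi(v)=\pi(\tilde\beta)^{-1}$, so that $\tilde\alpha u,\tilde\beta v\in K$ and hence $[\tilde\alpha u,\tilde\beta v]\in K$, with total length $O(L_1+L_2)$. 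Taking $N$ such commutators realises an arbitrary element of $K$ in case (a); an analogous construction using commutators $[s,\cdot]$ for $s\in S$ together with the irreducibility of the adjoint representation of $\SL_d(\F_p)$ on $\fsl_d(\F_p)$ handles case (b), since then a single non-trivial kernel element plus a bounded number of $S$-conjugations suffices to generate all of $K$.

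The main obstacle is that the corrections $u,v$ change the $\phi_2$-images from $\alpha,\beta$ to $\alpha\phi_2(u),\beta\phi_2(v)$, so that the commutator $[\tilde\alpha u,\tilde\beta v]$ need not equal the prescribed $[\alpha,\beta]$. One has to either exploit the freedom in the choice of $u,v$ to absorb this discrepancy (using surjectivity of the commutator map on $\SL_d\times\SL_d$ together with the density of $\langle S\rangle$ in $\SL_d(\wh\Z)$), or replace the commutator presentation by a Carter--Keller-style presentation of $\SL_d$ by elementary matrices $E_{ij}(t)$: elementary matrices supported in $K$, namely $E_{ij}(q_{i-1}t)$, can then be built from a single one by iterated commutation with short words in $S$, bypassing any correction argument. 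In either approach the delicate bookkeeping is to verify that only $O(1)$ factors are needed and each factor has length $O(L_1+L_2)$.
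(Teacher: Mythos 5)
Your reduction to the statement ``every element of the kernel $K=\Ga_{q_{i-1}}/\Ga_{q_i}$ is a word of length $O(L_1+L_2)$'' is the right first step, but the heart of the matter is exactly the point you flag and then leave unresolved: how to realize a \emph{prescribed} element of $K$ by a word of controlled length. Neither of your suggested fixes closes this gap. Choosing the corrections $u,v$ ``using surjectivity of the commutator map and density of $\langle S\rangle$'' gives existence of suitable words but no bound on their length; a length bound for a word with prescribed images simultaneously mod $q_{i-1}$ and mod $p$ is essentially the diameter bound for $\SL_d(\Z/q_i\Z)$ you are trying to prove, so this route is circular. The Carter--Keller variant stalls one step earlier: you never explain how to produce even one specific nontrivial element of $K$ (say an $E_{ij}(q_{i-1}t)$) as a short word, and the commutator width of $\SL_d(\F_p)$ is irrelevant to that difficulty. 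Also, in case (b) irreducibility of the adjoint representation only gives that the $S$-orbit of one nonzero element \emph{spans} $\fsl_d(\F_p)$; writing an arbitrary element as a product of boundedly many (uniformly in $p$) orbit elements is a genuinely stronger covering statement. Finally, you never ensure the kernel element you work with is non-central, which matters in the coprime case.

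The paper's proof is built precisely to avoid prescribing anything. Since $S^D\cdot\Ga_{q_{i-1}}=\SL_d(\Z/q_i\Z)$ with $D=\max(L_1,L_2)$ and $S$ generates, a pigeonhole argument produces some nontrivial $g_0\in S^{2D+1}\cap\Ga_{q_{i-1}}$, and a refined pigeonhole (intersecting a coset in more than $|Z(\SL_d(\Z/p\Z))|$ points) makes $g_0$ non-central mod $\Ga_{q_i}$ when $p\nmid q_{i-1}$. Conjugating $g_0^{\pm1}$ by $S^D$, which surjects onto the group controlling the conjugation action on $K$, yields the full conjugacy class (resp.\ adjoint orbit) of $g_0$ at word length $O(D)$. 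Covering theorems then finish the job: Lev's theorem that a bounded power of a non-central class covers $\SL_d(\Z/p\Z)$ in the coprime case, and \cite[Lemma 5]{BV} for the adjoint orbit in $\fsl_d(\Z/p\Z)$ when $p\mid q_{i-1}$. This mechanism --- one \emph{unprescribed} short kernel element, plus conjugation by $S^D$, plus a covering theorem --- is the missing ingredient; without it, or some substitute that likewise sidesteps length-controlled prescription, your argument does not go through.
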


\begin{proof}
Let
\[
D=\max\{\diam(\SL_d(\Z/q_{i-1}\Z),S),\diam(\SL_d(\Z/p\Z),S)\}.
\]
Then
\[
S^D\cdot\Ga_{q_{i-1}}=\SL_d(\Z/q_i\Z).
\]

Since $S$ is generating, $S^{D+1}$ must intersect some
$\Ga_{q_{i-1}}$-coset in at least two points.
Thus there is
\[
1\neq g_0\in S^{2D+1}\cap\Ga_{q_{i-1}}.
\]

If $p\nmid q_{i-1}$, and hence
$\Ga_{q_{i-1}}/\Ga_{q_i}=\SL_d(\Z/p\Z)$,
we also want to show that $g_0$ can be
taken non-central in $\SL_d(\Z/p\Z)$.
With the same argument as above, we can show that
$S^{(j+1)D+j}$ intersects all $\Ga_{q_{i-1}}$-cosets in at least
$j+1$ points.
Taking $j=|Z(SL_d(\Z/p\Z))|$, we can find a suitable $g_0$
in $S^{(j+1)D+j}$.

We put
\[
X=\{g^{-1}g_0g,g^{-1}g_0^{-1}g:g\in S^D\}.
\]
We show that
\be\label{eq_conjclaim}
X^{C}=\Ga_{q_{i-1}}/\Ga_{q_i}
\ee
for some constant $C$ depending on $d$.

We have two cases.
First, we suppose that $p\nmid q_{i-1}$.
Then $\Ga_{q_{i-1}}/\Ga_{q_i}=\SL_d(\Z/p\Z)$, and
$X$ is a non-trivial conjugacy class.
In this case (\ref{eq_conjclaim}) is a result of Lev \cite[Theorem 2]{Lev}.

Now suppose that $p|q_{i-1}$.
In this case $\Ga_{q_{i-1}}/\Ga_{q_i}$ is isomorphic to
$\fsl_d(\Z/p\Z)$, and the conjugation action
\[
h\mapsto g^{-1}hg,\quad g\in\SL_d(\Z/q_i\Z),\;h\in\Ga_{q_{i-1}}/\Ga_{q_i}
\]
factors through $G/\Ga_p=\SL_d(\Z/p\Z)$.
Now the claim (\ref{eq_conjclaim}) follows from \cite[Lemma 5]{BV}.

Therefore, we have $S^{C\cdot D}\supset\Ga_{q_{i-1}}/\Ga_{q_i}$
for some other constant $C$ depending on $d$.
This implies $S^{(C+1)\cdot D}=\SL_d(\Z/q_{i}\Z)$ which was to be proved.
\end{proof}

Now Corollary \ref{cr_sld} is immediate.
By \cite[Theorem 7.1]{BGT} and \cite[Theorem 2]{PSz} we have
\[
\diam(\SL_d(\Z/p\Z),S)\le C\log^{A}(p)
\]
for all primes $p$.
We can use Lemma \ref{lm_diam} repeatedly to show
\[
\diam(\SL_d(\wh\Z)/\Ga_q,S)\le C\log^{A}(p)
\]
for $\Ga_q\in\O_2$,
where $p$ is the largest prime factor of $q$ and $C$ is a different
constant depending on $M$.
This is precisely the poly-log diameter estimate we were looking for.

\section{Compact Lie groups}
\label{sc_comp}

The purpose of this section is to prove Theorem \ref{th_comp}.
Recall the definitions of $T,\L T, \L T^*, I, I^*,R, R_+,K$
and $\pi_v$
from Section \ref{sc_intro}.
Let $\mu$  be a probability measure on $G$.
Denote by $\chi_v$ the character of $\pi_v$.
For a continuous function $f$ and a
measure $\nu$ on $G$, we write
\[
f(\nu)=\int f(x)\igap d\nu(x).
\]
If $f$ is a continuous class function on $G$, then by the Peter-Weyl
theorem, we can decompose it as a linear
combination of irreducible characters.
Denote by $m_v(f)$ the coefficient of $\chi_v$ in this decomposition.

We introduce two partial orders on the space of continuous
class functions on $G$.
We write $f_1\le f_2$, if $f_1(g)\le f_2(g)$ for every $g\in G$.
We write $f_1\sqsubseteq f_2$ if $m_v(f_1)\le m_v(f_2)$ for all
$v\in \overline K\cap I^*$.
Denote by $\preceq$ the transitive closure of the union of $\le$
and $\sqsubseteq$, i.e. we write $f_1\preceq f_2$ if there is
a sequence of class functions $\f_i$ such that
\[
f_1\le\f_1\sqsubseteq \f_2\le \f_3\sqsubseteq\ldots\le \f_n\sqsubseteq f_2.
\]

These relations have a crucial property contained in the
following Lemma.
Recall that for a measure $\nu$ on $G$,
we define the measure $\wt\nu$ by
\[
\int f(x)\igap d\wt\nu(x)=\int f(x^{-1})\igap d\nu(x)
\]
for all continuous functions $f$.
We say that $\nu$ is symmetric if $\nu=\wt\nu$.

\begin{lem}\label{lm_prec}
Let $\nu$ be a symmetric probability measure and let
$f_1\preceq f_2$ be two continuous class functions on $G$.
Then
\[
f_1(\nu*\nu)\le f_2(\nu*\nu).
\] 
\end{lem}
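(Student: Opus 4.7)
The plan is to reduce the statement to the two generating relations of $\preceq$ and handle each separately. By definition of the transitive closure, if $f_1 \preceq f_2$ then there is a finite chain of continuous class functions interpolating between them in which consecutive pairs are comparable either under $\le$ or under $\sqsubseteq$, so it suffices to verify the conclusion in these two cases.

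The $\le$ case is immediate: if $f_1 \le f_2$ pointwise then $f_2 - f_1$ is a non-negative continuous function and $\nu * \nu$ is a positive Borel measure, so $f_2(\nu * \nu) - f_1(\nu * \nu) = \int (f_2 - f_1) \, d(\nu * \nu) \ge 0$.

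The substance of the lemma lies in the $\sqsubseteq$ case, and the single key computation I would carry out first is the positivity
\[
\chi_v(\nu * \nu) = \Tr\bigl(\pi_v(\nu)^2\bigr) \ge 0
\]
for every dominant weight $v \in \overline{K} \cap I^*$. This uses the symmetry hypothesis in an essential way: directly from the definition of $\wt\nu$ one checks $\pi_v(\nu)^* = \pi_v(\wt\nu)$, and combining with $\wt\nu = \nu$ shows that $\pi_v(\nu)$ is self-adjoint, so $\pi_v(\nu)^2$ is positive semidefinite and its trace is non-negative. Granted this, the Peter--Weyl expansion formally yields
\[
f_2(\nu*\nu) - f_1(\nu*\nu) = \sum_v \bigl(m_v(f_2) - m_v(f_1)\bigr)\, \chi_v(\nu * \nu),
\]
in which by hypothesis every coefficient $m_v(f_2) - m_v(f_1)$ is non-negative, and by the computation above every factor $\chi_v(\nu*\nu)$ is non-negative, so the right-hand side is $\ge 0$.

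The only delicate point, and the part I would worry about most, is justifying the interchange of sum and integral above when $f_1,f_2$ are merely continuous, since the Peter--Weyl series of a continuous class function need not converge absolutely or uniformly. To handle this I would approximate $f_1$ and $f_2$ uniformly by smooth class functions, for instance by convolving with a smooth conjugation-invariant approximate identity $\psi_\e$ chosen so that its own Peter--Weyl coefficients are non-negative (e.g.\ $\psi_\e = \phi_\e * \wt\phi_\e$ for a smooth bi-invariant bump $\phi_\e$). Such a mollification preserves the componentwise inequalities $m_v(f_1) \le m_v(f_2)$ and produces smooth class functions whose Peter--Weyl series do converge uniformly, so the displayed identity holds for the mollified pair and yields the conclusion after passing to the limit $\e \to 0$.
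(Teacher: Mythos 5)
Your argument is correct and follows the paper's proof essentially verbatim: you reduce to the two relations $\le$ and $\sqsubseteq$, handle the first by positivity of the measure $\nu*\nu$, and handle the second by observing that $\pi_v(\nu*\nu)=\pi_v(\nu)\pi_v(\nu)^*$ is a positive self-adjoint operator (via symmetry of $\nu$), so that $\chi_v(\nu*\nu)=\Tr(\pi_v(\nu*\nu))\ge0$, exactly as in the paper. The only difference is your additional mollification step justifying termwise integration of the Peter--Weyl series for merely continuous class functions; the paper passes over this point silently (it is vacuous in its applications, where the class functions involved are finite sums of characters), and your fix is valid.
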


\begin{proof}
Clearly, it is enough to prove the statements for $\le$ and $\sqsubseteq$
in place of $\preceq$.
For $\le$ it easily follows from the definitions and from the fact that
$\nu*\nu$ is a positive measure.

Suppose $f_1\sqsubseteq f_2$.
Observe that
\[
f_i(\nu*\nu)=\sum_{v\in \overline K\cap I^*}
m_v(f_i)\chi_v(\nu*\nu).
\]
Hence the claim follows from $m_v(f_1)\le m_v(f_2)$ once we prove
that $\chi_v(\nu*\nu)\ge0$ for all $v$.
This follows from $\chi_v(\nu*\nu)=\Tr(\pi_v(\nu*\nu))$
and from the fact that $\pi_v(\nu*\nu)=\pi_v(\nu)\cdot\pi_v(\nu)^*$
is a positive selfadjoint operator.
\end{proof}

Now we explain the strategy of the proof.
First of all, we note that by the argument at the beginning
of Section \ref{sc_thfin} we can assume that $\mu$
is symmetric.
Hence Lemma \ref{lm_prec} applies for
$\nu=\mu^{*(2l)}$
for all positive integers $l$.

We write for $r\ge 1$
\[
\chi_r=\lf(\sum_{|v|\le r}\chi_v\r)^2\cdot r^{-\dim \L T}
\]
which plays the role of $\chi_{G/\Ga}$ used in the previous section.
We also write
\[
l_r=2\lfloor C_r\log^{A+1} r\rfloor,
\]
where
\[
C_r=C_0\lf(10-{\log^{-1/10}(r)}\r)
\]
and $C_0$ is a suitably large constant to be set later.

Our goal is to prove the inequality
\be\label{eq_compgoal}
\chi_r\lf(\mu^{*(l_r)}\r)\le E
\ee
for some constant $E$ independent of $r$.
This will easily imply the theorem.

We assume that (\ref{eq_compgoal}) holds for some range $1\le r\le r_1$.
(This can be verified easily for $r_1=r_0$ if $C_0$ is suitably large
in terms of $\gap_{r_0}(\mu)$.)
And then we show that (\ref{eq_compgoal}) also holds for a suitable
$r=r_2$.
Iterating this argument, we can prove the claim for all $r$.

We prove the ``induction step" in the following way.
We find suitable functions $\f_1,\ldots,\f_n$
such that
\[
\chi_{r_2}\sqsubseteq\f_1+\ldots+ \f_n.
\]
Then it will be enough to estimate $\f_i(\mu^{*(l_{r_2})})$.
We will show that $\f_i\preceq B_{i}\chi_{r_1}$,
where $B_i$ is a number depending on $i,r_1$ and $r_2$.
This allows us to estimate $\f_i(\mu^{*(l_{r_1})})$.
We will also show that $m_v(\f_i)$ is either ``large" or 0,
and this will yield an estimate on the eigenvalues of 
$\pi_v(\mu^{*(l_{r_1})})$ for all $v$ which contributes to $\f_i$.
Finally this allows us to get a refined estimate on 
$\f_i(\mu^{*(l_{r_2})})$.

To implement the above plan we need methods to estimate
$m_v(f)$.
In our examples $f$ will always be the character of a representation
which is obtained from other representations using tensor products.
Explicit formulas are available for $m_v(f)$ in such cases, however,
they do not seem very practical for our purposes.
Instead, we will use elementary methods to estimate these coefficients
based on double-counting dimensions.
However, we still need some very basic facts about the representations
$\pi_v$.

The first fact is Weyl's dimension formula
\cite[{{Chapter VI. (1.7) (iv)}}]{BtD}:
\[
\dim\pi_v=\prod_{u\in R_+}
\frac{\langle u,v+\rho\rangle}{\langle u,\rho\rangle},
\]
where
\[
\rho=\frac{1}{2}\sum_{u\in R_+} u
\]
is the half sum of the positive roots.

The second fact is the content of the following lemma which
bounds the highest weight of possible irreducible constituents
of $\pi_v\otimes\pi_u$.
Recall that we denote the Haar measure on $G $ by $m_G$.
\begin{lem}
\label{lm_possible}
There is a constant $D$ depending only on $G$ such that
if
\be\label{eq_possible}
\int \chi_v\chi_u\overline{\chi_w}\igap dm_G\neq0
\ee
for some $v,u,w\in \overline K\cap I^*$, then
$|v-w|<D|u|$.
\end{lem}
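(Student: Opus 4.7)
The plan is to identify the integral as a tensor product multiplicity, apply the Brauer--Klimyk formula to extract a weight equation, and finish with a short convexity computation in the Weyl chamber.

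The integral $\int\chi_v\chi_u\overline{\chi_w}\igap dm_G$ equals the multiplicity of $\pi_w$ in $\pi_v\otimes\pi_u$ by character orthogonality; the hypothesis thus says this multiplicity is positive. I will invoke the Brauer--Klimyk formula,
\[
\chi_v\chi_u=\sum_{u'}m_u(u')\,\chi^{\mathrm{ext}}_{v+u'},
\]
where $u'$ ranges over the weights of $\pi_u$ with multiplicity $m_u(u')$, and $\chi^{\mathrm{ext}}_\lambda=\varepsilon(\sigma)\chi_{\sigma(\lambda+\rho)-\rho}$ if $\lambda+\rho$ is regular (with $\sigma$ the unique Weyl element sending it to the open dominant chamber) and zero otherwise. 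A non-vanishing coefficient of $\chi_w$ then guarantees the existence of a weight $u'$ of $\pi_u$ and some $\sigma$ in the Weyl group with $\sigma(v+u'+\rho)=w+\rho$, equivalently $u'=\sigma^{-1}(w+\rho)-(v+\rho)$.

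Since the weights of $\pi_u$ lie in the convex hull of the Weyl orbit of $u$, we have $|u'|\le|u|$ in the Weyl-invariant norm. Expanding
\[
|u'|^2=|w+\rho|^2+|v+\rho|^2-2\langle\sigma^{-1}(w+\rho),v+\rho\rangle,
\]
and applying the standard inequality $\langle\tau x,y\rangle\le\langle x,y\rangle$ for $\tau$ in the Weyl group and $x,y\in\overline K$ (a consequence of $x-\tau x$ being a nonnegative integer combination of positive roots paired with dominant $y$) to $x=w+\rho$, $y=v+\rho$, I conclude
\[
|u|^2\ge|u'|^2\ge|w+\rho|^2+|v+\rho|^2-2\langle w+\rho,v+\rho\rangle=|w-v|^2,
\]
whence $|v-w|\le|u|$, and the lemma holds with $D=2$.

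The only nontrivial input is the Brauer--Klimyk formula, which follows from Weyl's character formula via a short antisymmetrization argument; after that the remainder is an elementary Weyl-chamber computation, and I expect no essential obstacle.
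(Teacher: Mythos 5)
Your proof is correct, but it runs along a genuinely different route than the paper's. You identify the integral as the multiplicity of $\pi_w$ in $\pi_v\otimes\pi_u$ and then use the Brauer--Klimyk (Racah--Speiser) formula to produce a weight $u'$ of $\pi_u$ and a Weyl element $\sigma$ with $\sigma(v+u'+\rho)=w+\rho$; combining $|u'|\le|u|$ (convex hull of the Weyl orbit) with the standard inequality $\langle\sigma^{-1}(w+\rho),v+\rho\rangle\le\langle w+\rho,v+\rho\rangle$ for dominant vectors gives the clean two-sided bound $|v-w|\le|u|$ in one stroke, so any $D>1$ works. The paper avoids Klimyk's formula entirely: it cites only the dominance statement \cite[Chapter VI, Lemma (2.8)]{BtD}, applied twice (to $\pi_w\subset\pi_v\otimes\pi_u$ and to $\pi_v\subset\pi_w\otimes\overline{\pi_u}$), which yields $|\langle w-v,t_j\rangle|\le|u|$ on a basis of dominant unit vectors and hence $|v-w|<D|u|$ with an unspecified $D$ depending on the chamber geometry. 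So your argument uses a heavier (though completely standard) input and buys a sharper, explicit constant, while the paper's is lighter on representation-theoretic machinery and content with a $G$-dependent $D$, which is all that is needed in Lemmata \ref{lm_mult2} and \ref{lm_mult3}. One triviality: in the degenerate case $u=0$ your conclusion (like the paper's own derivation) only gives $|v-w|\le 0$, i.e.\ $v=w$, so the strict inequality as literally stated fails there; this is harmless for every use of the lemma, but worth a half-sentence if you want to be scrupulous.
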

\begin{proof}
If (\ref{eq_possible}) holds then $\pi_w$ is contained in
$\pi_v\otimes\pi_u$.
By \cite[Chapter VI, Lemma (2.8)]{BtD}, $v+u$ dominates
$w$ that is
\[
\langle v+u,t\rangle\ge\langle w,t\rangle
\]
for every $t\in \overline K$.
Similarly, if  (\ref{eq_possible}) holds then $\pi_v$ is contained in
$\pi_w\otimes\overline{\pi_u}$, hence
\[
\langle w+\overline{u},t\rangle\ge\langle v,t\rangle.
\]
Here $\overline{u}$ is the highest weight of $\overline{\chi_u}$.

Now let $t_1,\ldots, t_{\dim T}$ be a basis of $\L T^*$ consisting
of unit vectors in $\overline{K}$.
By the above inequalities, we have
\[
|\langle w-v,t_j\rangle|\le |u|
\]
for all $1\le j\le \dim T$.
The claim follows from this with the constant $D$ equal to the length
of the longest vector in the set
\[
\{x\in \L T^*:|\langle x,t_j\rangle|\le 1\}.
\]
\end{proof}

We proceed by some Lemmata which bound the multiplicities of some
irreducible constituents in certain tensor products.

\begin{lem}
\label{lm_mult1}
We have
\[
\int \lf(\sum_{|v|\le r}\chi_v\r)^2\igap  dm_G
=|\{v\in \overline{K}\cap I^*: |v|\le r\}|.
\]
In particular
\[
c\le m_{\chi_0}(\chi_r)\le C,
\]
where $c,C>0$ are constants depending on $G$ 
\end{lem}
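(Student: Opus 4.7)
The plan is to prove both assertions via Schur orthogonality and a standard lattice-point count.

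Expanding the square,
\[
\int\lf(\sum_{|v|\le r}\chi_v\r)^2\igap dm_G=\sum_{|v|,|u|\le r}\int \chi_v\chi_u\igap dm_G.
\]
By Schur orthogonality, $\int\chi_v\chi_u\igap dm_G$ equals the multiplicity of the trivial representation in $\pi_v\otimes\pi_u$, which is $1$ when $\pi_u$ is isomorphic to the dual of $\pi_v$ and $0$ otherwise. In highest-weight terms this happens precisely when $u=-w_0 v$, where $w_0$ is the longest element of the Weyl group. Since the inner product on $\L T^*$ is Weyl-invariant, the map $v\mapsto-w_0 v$ is a norm-preserving involution of $\overline K\cap I^*$, hence a bijection of $\{v\in\overline K\cap I^*:|v|\le r\}$ with itself. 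Exactly one term in the double sum is nonzero for each such $v$, yielding the claimed count.

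For the second part, $\chi_0\equiv 1$, so using the definition of $\chi_r$ together with the identity just proved,
\[
m_{\chi_0}(\chi_r)=\int\chi_r\igap dm_G=r^{-\dim\L T}\cdot|\{v\in\overline K\cap I^*:|v|\le r\}|.
\]
The set $\overline K\cap I^*$ is the intersection of a full-dimensional polyhedral cone in $\L T^*$ with a lattice of full rank, so by a standard argument (tiling $\overline K\cap B_r$ by fundamental domains of $I^*$ and bounding the boundary error) one obtains
\[
c'r^{\dim\L T}\le|\{v\in\overline K\cap I^*:|v|\le r\}|\le C'r^{\dim\L T}
\]
for some $c',C'>0$ depending only on the lattice $I^*$ and the cone $\overline K$, that is, only on $G$. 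Dividing by $r^{\dim\L T}$ gives the stated bounds on $m_{\chi_0}(\chi_r)$.

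The only mild issue is small $r$: if $r$ is very small the set can degenerate to $\{0\}$, so that the lower bound from the boundary-error estimate fails. This is harmless because $\chi_r$ is used in Theorem \ref{th_comp} only for $r$ beyond some threshold depending on $G$, and any such degenerate regime can be absorbed into the $G$-dependent constants.
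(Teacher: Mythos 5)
Your proof is correct and takes essentially the same route as the paper: the first identity is character orthonormality (the paper silently uses that the index set $\{|v|\le r\}$ is closed under the duality $v\mapsto -w_0v$, which you make explicit via Schur orthogonality), and the second part is the same computation $m_{\chi_0}(\chi_r)=r^{-\dim\L T}\,|\{v\in\overline{K}\cap I^*:|v|\le r\}|$ combined with the standard lattice-point count, which the paper leaves implicit. Your closing caveat about small $r$ is harmless since $\chi_r$ is only defined for $r\ge 1$, where the bounded range $1\le r\le R_0$ is absorbed into the $G$-dependent constants.
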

\begin{proof}
Since characters form an orthonormal basis, we have
\[
\int \lf(\sum_{|v|\le r}\chi_v\r)^2\igap  dm_G=
\int \sum_{|v|\le r}\chi_v\overline{\chi_v}\igap  dm_G
=|\{v\in \overline{K}\cap I^*: |v|\le r\}|.
\]
For the second statement notice that
\[
m_{\chi_0}(\chi_r)=\frac{1}{r^{\dim\L T}}
\int \lf(\sum_{|v|\le r}\chi_v\r)^2\chi_0\igap  dm_G
=\frac{|\{v\in \overline{K}\cap I^*: |v|\le r\}|}{r^{\dim\L T}}.
\]
\end{proof}

\begin{lem}
\label{lm_mult2}
There is a constant $C>0$ depending on $G$
such that the following holds.
Let $u\in \overline{K}\cap I^*$, and $r\ge 1$.
\[
m_{\chi_u}(\chi_r)\le C\dim \chi_u.
\]
Moreover, $m_{\chi_u}(\chi_r)=0$ if $|u|>Cr$.
\end{lem}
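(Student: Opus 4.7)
The plan is to unravel the definitions and reduce both bounds to elementary estimates about lattice points and traces of characters, using Lemma~\ref{lm_possible} for the support claim. Write $f_r=\sum_{|v|\le r}\chi_v$ so that $\chi_r=f_r^2/r^{\dim\L T}$, and recall from Lemma~\ref{lm_mult1} that $\int f_r^2\,dm_G=|\{v\in\overline K\cap I^*:|v|\le r\}|$. The point is that this function $f_r^2$ is the character of an actual (reducible) representation of $G$, because the set $\{|v|\le r\}$ is stable under $v\mapsto -w_0 v$ (the Weyl group preserves the inner product, so complex conjugation of $\pi_v$ lands in the same set), so $f_r$ is real-valued and $f_r^2=f_r\overline{f_r}$ is a character, guaranteeing that all the multiplicities $m_v(f_r^2)$ are non-negative integers.

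For the first inequality I would estimate
\[
m_{\chi_u}(f_r^2)=\int f_r^2\,\overline{\chi_u}\,dm_G
\le \|\chi_u\|_\infty\int f_r^2\,dm_G
\le \dim\pi_u\cdot|\{v\in\overline K\cap I^*:|v|\le r\}|,
\]
using only the crude pointwise bound $|\chi_u|\le\dim\pi_u$, which is legitimate since the integrand $f_r^2\overline{\chi_u}$ has non-negative integral. A standard lattice-point estimate in the cone gives $|\{v\in\overline K\cap I^*:|v|\le r\}|\le C r^{\dim\L T}$ for $r\ge1$ with $C$ depending only on $G$, and dividing by $r^{\dim\L T}$ yields $m_{\chi_u}(\chi_r)\le C\dim\pi_u$ as required.

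For the vanishing assertion I would expand
\[
m_{\chi_u}(f_r^2)=\sum_{|v_1|,|v_2|\le r}\int\chi_{v_1}\chi_{v_2}\overline{\chi_u}\,dm_G
\]
and observe that by Lemma~\ref{lm_possible} (with the roles $(v,u,w)\mapsto(v_1,v_2,u)$) each integral can be non-zero only when $|v_1-u|<D|v_2|\le Dr$. Combined with $|v_1|\le r$ this forces $|u|\le(D+1)r$, so taking $C\ge D+1$ (and at least as large as the constant from the first part) makes the entire sum vanish whenever $|u|>Cr$. The whole argument is short; the only step that requires any care is noticing that $f_r$ is real-valued, which is what licenses both the pointwise bound on $|\chi_u|$ and the non-negativity of the multiplicities.
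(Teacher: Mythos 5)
Your argument is correct, and for the main inequality it takes a genuinely different route from the paper. The paper expands $r^{\dim \L T}m_{\chi_u}(\chi_r)=\sum_{|v|,|w|\le r}\int\chi_v\chi_w\overline{\chi_u}\,dm_G$ and, for each fixed $v$, bounds the total multiplicity of those irreducible constituents of $\pi_v\otimes\overline{\pi_u}$ whose dimension is at least $\dim\chi_v$ by $\dim(\pi_v\otimes\overline{\pi_u})/\dim\chi_v=\dim\chi_u$; summing over the at most $Cr^{\dim\L T}$ admissible $v$ gives the claim. You instead use pointwise non-negativity of $\bigl(\sum_{|v|\le r}\chi_v\bigr)^2$ together with the sup-norm bound $|\chi_u|\le\dim\chi_u$ and the $L^1$-mass computed in Lemma \ref{lm_mult1}; this trades the dimension double-counting for a pointwise domination argument, very much in the spirit of the comparisons the paper uses elsewhere (e.g.\ the step before (\ref{eq_fiest}), or (\ref{eq_piest2}) in the profinite section), and it yields the same bound with a constant of the same order (the lattice-point density of $I^*$). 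The one extra ingredient your route requires, namely that $f_r=\sum_{|v|\le r}\chi_v$ is real-valued because $v\mapsto -w_0v$ is a norm-preserving involution of $\overline K\cap I^*$ stabilizing the index set, you justify correctly; the paper's own proof of this lemma avoids needing it by using non-negativity of the coefficients $\int\chi_v\chi_w\overline{\chi_u}\,dm_G$ instead, though non-negativity of $\chi_r$ is invoked later in the paper in any case. Your treatment of the vanishing statement via Lemma \ref{lm_possible} (with the roles of the weights permuted, forcing $|u|\le(D+1)r$) is exactly the argument the paper leaves implicit when it says the second part is immediate.
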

\begin{proof}
For the first part of the lemma we write
\bean
r^{\dim \L T}m_{\chi_u}(\chi_r)
&=&\int \lf(\sum_{|v| \le r}\chi_v\r)^2\overline{\chi_u}\igap  dm_G\\
&\le&2\sum_{\substack{|v|,|w| \le r\\\dim\chi_v\le\dim\chi_w}}\int 
\chi_v\chi_w\overline{\chi_u} \igap dm_G\\
&=&2\sum_{|v|\le r}
\lf[\sum_{\substack{|w|\le r\\\dim\chi_w\ge\dim\chi_v}}
m_{\overline{\chi_w}}(\chi_v\overline{\chi_u})\r].
\eean

The sum of the multiplicities of some irreducible components
of $\pi_v\otimes\overline{\pi_u}$ can not be bigger than
the dimension of $\pi_v\otimes\overline{\pi_u}$ divided by
the minimal dimension of the irreducible components we consider.
Thus
\[
r^{\dim \L T}m_{\chi_u}(\chi_r)
\le2\sum_{|v|\le r}\frac{\dim(\chi_v\overline{\chi_u})}
{\dim(\chi_v)}
\le Cr^{\dim \L T}\dim\chi_u.
\]

The second part follows immediately from Lemma \ref{lm_possible}.
\end{proof}

For $u\in\overline{K}\cap I^*$ and $r\ge1$,
we write
\[
\psi_{u,r}=\chi_r\cdot\sum_{w:|u-w|<3Dr}\chi_w.
\]
We show that $\chi_z$ is contained in $\psi_{u,r}$ with high multiplicity
if $|u-z|\le Dr$.

\begin{lem}\label{lm_mult3}
Let $r\ge1$ and $z,u\in\overline K\cap I^*$ with $|z-u|\le Dr$.
Then
\[
m_{\chi_z}(\psi_{u,r})\ge c\cdot\frac{\dim(\chi_z)\cdot r^{\dim G}}
{\max_{{w:|u-w|<3Dr}}\{\dim (\chi_w)\}},
\]
where $c>0$ is a constant depending only on $G$.
\end{lem}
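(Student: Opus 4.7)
My plan is to decompose $\chi_r$ into its irreducible constituents and, for each piece, combine Frobenius reciprocity with a dimension count.

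First I would write
$$m_{\chi_z}(\psi_{u,r}) \;=\; \sum_{y\in\overline K\cap I^*} m_{\chi_y}(\chi_r)\cdot M_y(z),\qquad M_y(z):=\sum_{w:\,|u-w|<3Dr}m_{\chi_z}(\chi_y\chi_w),$$
which is just the bilinear expansion of $\chi_r=\sum_y m_{\chi_y}(\chi_r)\chi_y$. Using the standard identity $m_{\chi_z}(\chi_y\chi_w)=m_{\chi_w}(\overline{\chi_y}\chi_z)$, the quantity $M_y(z)$ counts (with multiplicity) the irreducible summands $\pi_{w'}$ of $\overline{\pi_y}\otimes\pi_z$ whose highest weight satisfies $|u-w'|<3Dr$. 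The crucial geometric claim — where the constant $3D$ in the definition of $\psi_{u,r}$ is used — is that this side condition is vacuous: for $y$ with $m_{\chi_y}(\chi_r)>0$, Lemma \ref{lm_mult2} forces $|y|\lea r$, and then Lemma \ref{lm_possible} shows every irreducible constituent $\pi_{w'}$ of $\overline{\pi_y}\otimes\pi_z$ satisfies $|z-w'|<D|y|$; combined with the hypothesis $|z-u|\le Dr$ this yields $|u-w'|<3Dr$.

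Consequently $M_y(z)$ is the total multiplicity of all irreducible summands of $\overline{\pi_y}\otimes\pi_z$, and a dimension count gives
$$\dim\chi_y\cdot\dim\chi_z=\dim(\overline{\pi_y}\otimes\pi_z)=\sum_{w'}m_{\chi_{w'}}(\overline{\chi_y}\chi_z)\dim\chi_{w'}\le M_y(z)\cdot\max_{w:\,|u-w|<3Dr}\dim\chi_w,$$
so $M_y(z)\ge\dim\chi_y\dim\chi_z/\max_w\dim\chi_w$. Summing over $y$ and using $\sum_y m_{\chi_y}(\chi_r)\dim\chi_y=\chi_r(1)$ produces
$$m_{\chi_z}(\psi_{u,r}) \;\ge\; \frac{\dim\chi_z}{\max_{w:\,|u-w|<3Dr}\dim\chi_w}\cdot\chi_r(1).$$

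Finally I would estimate $\chi_r(1)=r^{-\dim\L T}\bigl(\sum_{|v|\le r}\dim\chi_v\bigr)^2\gea r^{\dim G}$ via Weyl's dimension formula: restricting the inner sum to $v\in\overline K\cap I^*$ inside a fixed sub-cone strictly interior to $\overline K$ with $r/2\le|v|\le r$ gives $\gea r^{\dim\L T}$ lattice points, each with $\dim\chi_v\gea r^{|R_+|}$; squaring and dividing by $r^{\dim\L T}$, together with the identity $\dim G=\dim\L T+2|R_+|$, delivers the bound. The main obstacle — and essentially the only non-formal step — is the geometric range check in the first paragraph, which is precisely what pins down the constant $3D$ in the definition of $\psi_{u,r}$.
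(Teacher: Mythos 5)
Your strategy is essentially the paper's (a Frobenius reciprocity/dimension count showing every relevant constituent lands in the window $|u-w|<3Dr$, then the lattice-point lower bound $\chi_r(1)\gea r^{\dim G}$ from Weyl's formula), but the step you yourself flag as crucial — that the side condition $|u-w'|<3Dr$ is vacuous — is not justified by the lemmas you cite, and as written it fails quantitatively. Lemma \ref{lm_mult2} only gives $|y|\le Cr$ for the constituents $\chi_y$ of $\chi_r$, and tracing its proof the constant is $C=1+D$, where $D\ge 1$ is the constant of Lemma \ref{lm_possible} (and $D>1$ except in rank one). Feeding this into Lemma \ref{lm_possible} gives, for a constituent $\pi_{w'}$ of $\overline{\pi_y}\otimes\pi_z$, only $|z-w'|<D|y|\le D(1+D)r$, hence $|u-w'|<(2+D)Dr$, which exceeds $3Dr$ as soon as $D>1$. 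If some constituents escape the window, your inequality $\dim\chi_y\dim\chi_z\le M_y(z)\cdot\max_{w:|u-w|<3Dr}\dim\chi_w$ collapses: the escaping constituents carry part of the dimension and their dimensions are not controlled by the maximum over the window, so the lower bound on $M_y(z)$ does not follow.

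The gap is repairable, and the repair is exactly where your route diverges from the paper's. What you actually need is $|y|\le 2r$ for every constituent $\chi_y$ of $\chi_r$; this is true, but it comes from the domination statement \cite[Chapter VI, Lemma (2.8)]{BtD} used inside the proof of Lemma \ref{lm_possible}: if $\pi_y\subset\pi_{v_1}\otimes\pi_{v_2}$ with $|v_1|,|v_2|\le r$, then $v_1+v_2$ dominates $y$, and pairing with $t=y/|y|\in\overline K$ gives $|y|\le|v_1+v_2|\le 2r$. With that, your chain gives $|u-w'|<Dr+2Dr=3Dr$ and the rest of your argument goes through. Alternatively, do as the paper does: keep the square $\bigl(\sum_{|v|\le r}\chi_v\bigr)^2$ unexpanded and apply Lemma \ref{lm_possible} twice to $\overline{\chi_{v_1}}\,\overline{\chi_{v_2}}\chi_z$ with $|v_1|,|v_2|\le r$, which yields $|w-z|<2Dr$ directly; the dimension count and the estimate $\chi_r(1)\gea r^{\dim G}$ in your last paragraph then coincide with the paper's proof.
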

\begin{proof}
We can write
\bea
r^{\dim \L T}m_{\chi_z}(\psi_{u,r})
&=&\sum_{w:|u-w|<3Dr}\lf[
\sum_{|v_1|,|v_2|\le r}
\int\chi_{v_1}\chi_{v_2}\chi_{w}\overline{\chi_z}\igap dm_G\r]\nonumber\\
&=&\sum_{w:|u-w|<3Dr}
m_{\chi_w}\lf(\sum_{|v_1|,|v_2|\le r}
\overline{\chi_{v_1}}\,\overline{\chi_{v_2}}\chi_z\r)\nonumber\\
&\ge&\frac{\sum_{|v_1|,|v_2|\le r}
\dim{\chi_{v_1}}\cdot\dim{\chi_{v_2}}\cdot\dim\chi_z}
{\max_{w:|u-w|<3Dr}\dim\chi_w}.\label{eq_mult3}
\eea
In the last line we used the fact that all irreducible components
of $\overline{\chi_{v_1}}\,\overline{\chi_{v_2}}\chi_z$ has
highest weight $w$ satisfying $|w-z|\le 2Dr$
and hence $|u-w|<3Dr$ which follows from two
applications of Lemma \ref{lm_possible}.
Hence all possible irreducible component appears in the range of
summation, and the inequality follows by comparing dimensions.

Let $K'$ be a closed convex cone strictly contained
in the positive Weyl chamber $K$.
For $v\in K'\cap I^*$ and $|v|\ge r/2$ it follows from
Weyl's dimension formula that
$\dim(\chi_v)\ge c r^{|R_+|}$ for some constant $c>0$ depending only on
$G,K'$.
Thus the numerator in (\ref{eq_mult3}) is bounded below by
\[
c\dim(\chi_z)\cdot r^{2\dim \L T+2|R_+|}.
\]
The proof is finished by noting that $\dim G=\dim\L T+2|R_+|$.
\end{proof}

We continue implementing our plan described above.
Recall that  $r_2>r_1\ge 1$, are numbers that we will choose later
and that
we assume 
\be\label{eq_cmpgoal}
\chi_r\lf(\mu^{*(l_r)}\r)\le E
\ee
for $r\le r_1$ and for some number $E$ to be chosen later, as well.
Our goal is to prove (\ref{eq_cmpgoal}) with $r=r_2$.

Let $u_0=0,u_1,\ldots u_m$ be a maximal $Dr_1$ separated subset
of $\{v\in\overline K\cap I^*:|v|\le Cr_2\}$, where $C$
is the constant from Lemma \ref{lm_mult2}.
For $i=0,\ldots, m$, let
\[
M_{i}=\frac{C\max_{w:|u_i-w|<3Dr_1}\{\dim\chi_w\}}{cr_1^{\dim G}},
\]
where $c$ is the constant from Lemma \ref{lm_mult3} and $C$
is as above.
Write
\[
\f_i=M_i\sum_{w:|u_i-w|<Dr_1}m_{\chi_w}(\psi_{u_i,r_1})\chi_w,
\]
i.e. we removed from $\psi_{u_i,r_1}$ those irreducible
components whose multiplicities we cannot bound below.
Hence $m_{\chi_w}(\f_i)\ge C\cdot\dim(\chi_w)$ if $|w-u_i|<Dr_1$
by Lemma \ref{lm_mult3} (applied with $r=r_1$).

It follows from Lemma \ref{lm_mult2} (applied with $r=r_2$)
that
\[
\chi_{r_2}\sqsubseteq\sum_{i=0}^m\f_i.
\]
Moreover, we have
\[
\f_i\sqsubseteq M_i\psi_{u_i,r_1}\le
\lf[M_i\sum_{w:|u_i-w|<3Dr_1}\dim(\chi_w)\r]\cdot\chi_{r_1},
\]
because $\chi_{r_1}$ is non-negative, and
\[
\sum_{w:|u_i-w|<3Dr_1}\chi_w(g)\le
\sum_{w:|u_i-w|<3Dr_1}\dim\chi_w
\]
for every $g\in G$.
Clearly
\[
\sum_{w:|u_i-w|<3Dr_1}\dim\chi_w\le Cr_1^{\dim \L T}
\max_{w:|u_i-w|<3Dr_1}\{\dim\chi_w\},
\]
hence
\be\label{eq_fiest}
\f_i\preceq C\frac{(\max_{w:|u_i-w|<3Dr_1}\{\dim\chi_w\})^2}
{r_1^{2|R_+|}}\cdot\chi_{r_1}.
\ee

Denote by $N_i$ the number of positive roots $v\in R_+$ such that
$\langle u_i,v\rangle\le 4Dr_1|v|$.
Then it follows from Weyl's dimension formula that
\be\label{eq_minmax}
\max_{w:|u_i-w|<3Dr_1}\dim\chi_w\le C r_1^{N_i}
\min_{w:|u_i-w|<3Dr_1}\dim\chi_w.
\ee

After these preparations, we can give an estimate on
$\f_i(\mu^{*(l_{r_2})})$.
This is done in the next two Lemmata.

\begin{lem}\label{lm_fiest1}
Fix $0\le i\le m$ such that $N_i<|R_+|$.
If $r_1$ is sufficiently large and
\[
\log^{1/3} r_1\le \log r_2-\log r_1\le \log^{1/2} r_1,
\]
then
\[
\f_i\lf(\mu^{*(l_{r_2})}\r)\le \lf(\frac{r_1}{r_2}\r)^{(A-1)(|R_+|-N_i)}.
\]
\end{lem}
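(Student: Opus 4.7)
The plan is to bound $\f_i(\mu^{*(l_{r_2})})$ in three steps: (i) use the inductive hypothesis at $l_{r_1}$ and (\ref{eq_fiest}) to bound each individual character value $\chi_w(\mu^{*(l_{r_1})})$ (for $w$ in the support of $\f_i$); (ii) extrapolate to the larger exponent $l_{r_2}$ by exploiting the positivity of the eigenvalues of $\pi_w(\mu)^{l_{r_1}}$; (iii) sum with the multiplicities $m_{\chi_w}(\f_i)$ and absorb the max-dimension estimate via Weyl's formula. Throughout, set $\alpha := |R_+| - N_i$, which is positive by hypothesis, and $X_j := \log r_j$.

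\textbf{Individual bound at $l_{r_1}$.} Applying Lemma \ref{lm_prec} to (\ref{eq_fiest}) with $\nu = \mu^{*(l_{r_1}/2)}$ together with the inductive hypothesis $\chi_{r_1}(\mu^{*(l_{r_1})}) \le E$ gives
\[
\f_i(\mu^{*(l_{r_1})}) \le C\,\frac{(\max_{w'}\dim\chi_{w'})^2\,E}{r_1^{2|R_+|}},
\]
the maximum being over $w'$ with $|u_i-w'|<3Dr_1$. By the choice of $M_i$ and Lemma \ref{lm_mult3}, every $w$ with $|u_i-w|<Dr_1$ satisfies $m_{\chi_w}(\f_i)\ge c\dim\chi_w$. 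Moreover $\chi_{w'}(\mu^{*(l_{r_1})})\ge 0$ for every $w'$, since $\pi_{w'}(\mu)^{l_{r_1}}$ is positive semidefinite ($\mu$ symmetric, $l_{r_1}$ even). Hence each term of $\f_i(\mu^{*(l_{r_1})})=\sum_w m_{\chi_w}(\f_i)\chi_w(\mu^{*(l_{r_1})})$ is nonnegative and
\[
\chi_w(\mu^{*(l_{r_1})})\le \frac{\f_i(\mu^{*(l_{r_1})})}{m_{\chi_w}(\f_i)} \le \frac{C\,(\max_{w'}\dim\chi_{w'})^2\,E}{r_1^{2|R_+|}\dim\chi_w}.
\]
Combining with (\ref{eq_minmax}) gives $\chi_w(\mu^{*(l_{r_1})})\le T$ where $T:=CE\,r_1^{N_i}(\max_{w'}\dim\chi_{w'})/r_1^{2|R_+|}$.

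\textbf{Extrapolation and the max-dimension estimate.} Let $\l_w$ be the largest eigenvalue in modulus of $\pi_w(\mu)$. Then $\l_w^{l_{r_1}}\le T$, and since every $\l_{w,j}^{l_{r_1}}\ge 0$,
\[
\chi_w(\mu^{*(l_{r_2})})=\sum_j \l_{w,j}^{l_{r_1}}\l_{w,j}^{l_{r_2}-l_{r_1}}\le \l_w^{l_{r_2}-l_{r_1}}\chi_w(\mu^{*(l_{r_1})})\le T^{(l_{r_2}-l_{r_1})/l_{r_1}}\chi_w(\mu^{*(l_{r_1})}).
\]
Summing with the weights $m_{\chi_w}(\f_i)$ yields $\f_i(\mu^{*(l_{r_2})})\le T^{(l_{r_2}-l_{r_1})/l_{r_1}}\f_i(\mu^{*(l_{r_1})})$. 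Using Weyl's dimension formula: each of the $N_i$ "short" positive roots $v$ (with $\langle u_i,v\rangle\le 4Dr_1|v|$) contributes a factor $\le Cr_1$ to $\dim\chi_{w'}$, while each of the $\alpha$ "long" roots contributes $\le \langle v,u_i\rangle+Cr_1\le C|u_i|\le Cr_2$. Therefore $\max_{w'}\dim\chi_{w'}\le Cr_1^{N_i}r_2^{\alpha}$, whence $(\max\dim)^2/r_1^{2|R_+|}\le C(r_2/r_1)^{2\alpha}$ and $T\le CE(r_2/r_1^2)^{\alpha}$.

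\textbf{Exponent calculation, and the main obstacle.} Taking logarithms,
\[
\log\f_i(\mu^{*(l_{r_2})})\le 2\alpha(X_2-X_1)+\frac{l_{r_2}-l_{r_1}}{l_{r_1}}\bigl[\alpha(X_2-2X_1)+O(1)\bigr]+O(1).
\]
A Taylor expansion of $l_r=2\lfloor C_0(10-X^{-1/10})X^{A+1}\rfloor$ around $X=X_1$, valid because $X_2-X_1\le X_1^{1/2}$, gives
\[
\frac{l_{r_2}-l_{r_1}}{l_{r_1}}=(A+1)\frac{X_2-X_1}{X_1}+\frac{X_2-X_1}{100\,X_1^{11/10}}+O\!\left(\frac{(X_2-X_1)^2}{X_1^2}\right).
\]
Writing $X_2-2X_1=-X_1+(X_2-X_1)$, the leading terms collapse to $[2\alpha-(A+1)\alpha](X_2-X_1)=-\alpha(A-1)(X_2-X_1)=\log(r_1/r_2)^{(A-1)\alpha}$, exactly the bound sought. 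The hard part is verifying that all remaining error terms are negligible. The crucial ingredient is the \emph{negative} correction $-\alpha(X_2-X_1)/(100\,X_1^{1/10})$ produced by the second term of the Taylor expansion above: the hypothesis $X_2-X_1\ge X_1^{1/3}$ forces $(X_2-X_1)X_1^{-1/10}\ge X_1^{7/30}\to\infty$, so for $r_1$ sufficiently large this correction dominates the $O(1)$ constants arising from $\log(CE)$ as well as the quadratic remainder. This is precisely the reason the definition $C_r=C_0(10-\log^{-1/10}(r))$ contains the $\log^{-1/10}$ subtraction.
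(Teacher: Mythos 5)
Your proof is correct and follows essentially the same route as the paper: the bound $\f_i(\mu^{*(l_{r_1})})\le CE(\max\dim)^2/r_1^{2|R_+|}$ from (\ref{eq_fiest}) and the inductive hypothesis, the eigenvalue bound via $m_{\chi_w}(\f_i)\ge c\dim\chi_w$ and (\ref{eq_minmax}), the Weyl-formula estimate $\max\dim\le Cr_1^{N_i}r_2^{|R_+|-N_i}$, extrapolation from $l_{r_1}$ to $l_{r_2}$ using positivity of even powers, and the exponent computation in which the $\log^{-1/10}$ correction in $C_r$ supplies a diverging term (of size $\gtrsim\log^{7/30}r_1$, thanks to $\log r_2-\log r_1\ge\log^{1/3}r_1$) that absorbs the constants and the quadratic remainders controlled by $\log r_2-\log r_1\le\log^{1/2}r_1$. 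The only cosmetic difference is that you bound each $\chi_w(\mu^{*(l_{r_1})})$ and extrapolate character by character, while the paper works with a single $\l_{\max}$ over all constituents of $\f_i$; the content is identical.
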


Recall that the value of $A$ is given in the statement of Theorem
\ref{th_comp} and it also appears in the definition of $l_r$ above.

\begin{proof}
For notational simplicity, write
\[
X=\max_{w:|u_i-w|<3Dr_1}\dim\chi_w,
\]
and note that by Weyl's dimension formula, we have
\be\label{eq_Xest}
X\le C r_1^{N_i}r_2^{|R_+|-N_i}.
\ee

By (\ref{eq_fiest}), the induction hypothesis and Lemma \ref{lm_prec},
we have
\be\label{eq_fiest1}
\f_i\lf(\mu^{*(l_{r_1})}\r)\le CE\frac{X^2}{r_1^{2|R_+|}}.
\ee

Denote by $\l_{\max}$ the maximum of the absolute values of the
eigenvalues of the operators $\pi_v(\mu)$ for $|v-u_i|\le D{r_1}$,
i.e. for the irreducible characters contained in $\f_i$.
Clearly
\be\label{eq_lmaxest1}
\l_{\max}^{l_{r_1}}\le CE\frac{X^2}{r_1^{2|R_+|}m_{\chi_v}(\f_i)},
\ee
where $\chi_v$ is the character of the representation, which contains
$\l_{\max}$.
Recall that by Lemma \ref{lm_mult3} and the definition of $\f_i$, we have
\[
m_{\chi_v}(\f_i)\ge c\dim\chi_v.
\]
If we combine this with (\ref{eq_minmax}) and (\ref{eq_lmaxest1}),
we get
\be\label{eq_lmaxest2}
\l_{\max}^{l_{r_1}}\le CE\frac{X}{r_1^{2|R_+|-N_i}}
\le CE\frac{r_2^{|R_+|-N_i}}{r_1^{2|R_+|-2N_i}}.
\ee
(For the second inequality we used (\ref{eq_Xest}).)

By (\ref{eq_fiest1}), we clearly have
\bea
\f_i\lf(\mu^{*(l_{r_2})}\r)&\le& CE\frac{X^2}{r_1^{2|R_+|}}\cdot
\l_{\max}^{l_{r_2}-l_{r_1}}\nonumber\\
&\le& CE\frac{r_2^{2|R_+|-2N_i}}{r_1^{2|R_+|-2N_i}}\cdot
\lf(CE\frac{r_2^{|R_+|-N_i}}
{r_1^{2|R_+|-2N_i}}\r)^{\frac{l_{r_2}-l_{r_1}}{l_{r_1}}}\label{eq_fiest2}.
\eea

By a computation very similar to (\ref{eq_lcomput}), we get
\be\label{eq_lest}
\frac{l_{r_2}-l_{r_1}}{l_{r_1}}
\ge \lf(A+1+\frac{c}{\log^{1/10} r_1}\r)
\frac{\log r_2-\log r_1}{\log r_1},
\ee
where $c$ is an absolute constant.

Now we write
\be\label{eq_lest2}
(\log r_2-2\log r_1)\frac{\log r_2-\log r_1}{\log r_1}
=(\log r_1 -\log r_2)\lf(1-\frac{\log r_2-\log r_1}{\log r_1}\r).
\ee
and
\be\label{eq_lest3}
\lf(A+1+\frac{c}{\log^{1/10} r_1}\r)
\lf(1-\frac{\log r_2-\log r_1}{\log r_1}\r)
\ge A+1+\frac{c}{2\log^{1/10}r_1}
\ee
which follows from our assumption
$\log r_2-\log r_1\le \log^{1/2} r_1$.

Combining (\ref{eq_lest}), (\ref{eq_lest2}) and (\ref{eq_lest3})
we get
\[
\lf(\frac{r_2^{|R_+|-N_i}}
{r_1^{2|R_+|-2N_i}}\r)^{\frac{l_{r_2}-l_{r_1}}{l_{r_1}}}
\le\lf(\frac{r_1}{r_2}\r)^{(A+1)(|R_+|-N_i)+c\cdot\log^{-1/10}r_1}.
\]
If we plug this into (\ref{eq_fiest2}), we get
\[
\f_i\lf(\mu^{*(l_{r_2})}\r)\le
\lf(\frac{r_1}{r_2}\r)^{(A-1)(|R_+|-N_i)+c\cdot\log^{-1/10}r_1}
(CE)^{l_{r_2}/l_{r_1}}.
\]

Finally we note that by $\log r_2-\log r_1\ge\log^{1/3} r_1$, we have
\[
\lf(\frac{r_2}{r_1}\r)^{c/\log^{1/10}r_1}
\ge e^{c\log^{7/30} r_1}\ge(CE)^2
\]
if $r_1$ is sufficiently large depending on $G$ and $E$.
On the other hand, by the computation leading to (\ref{eq_lest}),
we have $l_{r_2}/l_{r_1}<2$.
This finishes the proof.
\end{proof}

\begin{lem}\label{lm_fiest2}
Fix $0\le i\le m$ such that $N_i=|R_+|$.
If $\log r_2-\log r_1>\log^{1/3}r_1$ and $r_1$ is sufficiently large
then
\[
\f_i(\mu^{*(l_{r_2})})<C,
\]
where $C$ is a constant depending only on $G$ (and not on $E$).
\end{lem}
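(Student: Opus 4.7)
The plan is to adapt the proof of Lemma \ref{lm_fiest1} to the degenerate case $N_i = |R_+|$.

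A geometric reduction comes first. The condition $\langle u_i, v \rangle \le 4Dr_1 |v|$ for every $v \in R_+$, together with the fact that the positive roots span $\L T^*$ in a semi-simple Lie algebra, forces $|u_i| \le c_G r_1$ for a constant $c_G$ depending only on $G$; hence $|w| \le c'_G r_1$ and, by Weyl's dimension formula, $\dim \chi_w \le C r_1^{|R_+|}$ for every $w$ with $|u_i - w| < 3Dr_1$. Plugged into (\ref{eq_fiest}), this collapses the prefactor $(\max \dim \chi_w)^2/r_1^{2|R_+|}$ to a $G$-dependent constant, so $\f_i \preceq C \chi_{r_1}$.

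By Lemma \ref{lm_prec} and the induction hypothesis (\ref{eq_cmpgoal}), this immediately gives $\f_i(\mu^{*(l_{r_1})}) \le CE$. Following the eigenvalue argument of Lemma \ref{lm_fiest1} (Lemma \ref{lm_mult3} together with (\ref{eq_minmax}) and the bound on $X$), the inequalities (\ref{eq_lmaxest1})--(\ref{eq_lmaxest2}) degenerate here to $\lambda_{\max}^{l_{r_1}} \le CE$, and then (\ref{eq_fiest2}) yields $\f_i(\mu^{*(l_{r_2})}) \le (CE)^{l_{r_2}/l_{r_1}} \le (CE)^2$, using the estimate $l_{r_2}/l_{r_1} < 2$ from the proof of Lemma \ref{lm_fiest1}.

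The final step upgrades this to a bound independent of $E$, mimicking the closing paragraph of Lemma \ref{lm_fiest1}'s proof. The hypothesis $\log r_2 - \log r_1 > \log^{1/3} r_1$, combined with the reserve $c/\log^{1/10} r_1$ contained in (\ref{eq_lest}) (which itself originates in the factor $(10 - \log^{-1/10} r)$ in the definition of $C_r$), produces a decay of the form $(r_2/r_1)^{c/\log^{1/10} r_1} \ge e^{c \log^{7/30} r_1} \ge (CE)^2$ as soon as $r_1$ is sufficiently large in terms of $E$ and $G$. Absorbing the $(CE)^2$ factor by this reserve leaves a bound $\f_i(\mu^{*(l_{r_2})}) \le 1$, which establishes the lemma with any absolute constant $C$.

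The main obstacle is precisely that the polynomial saving $(r_1/r_2)^{(A-1)(|R_+|-N_i)}$ that drives Lemma \ref{lm_fiest1} vanishes identically when $N_i = |R_+|$; one must therefore extract all the necessary decay from the subleading $c/\log^{1/10} r_1$ reserve and from the freedom to take $r_1$ arbitrarily large, which is exactly what the "sufficiently large" hypothesis permits.
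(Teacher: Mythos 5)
There is a genuine gap, and it is exactly at the point you flag as the ``main obstacle.'' In the proof of Lemma \ref{lm_fiest1}, the factor $(r_1/r_2)^{c\log^{-1/10}r_1}$ that eventually absorbs $(CE)^{l_{r_2}/l_{r_1}}$ does not come from the exponent reserve alone: it comes from raising the base $r_2^{|R_+|-N_i}/r_1^{2|R_+|-2N_i}$, which is strictly less than $1$, to an exponent containing the reserve $c\log^{-1/10}r_1$. When $N_i=|R_+|$ your eigenvalue bound degenerates to $\l_{\max}^{l_{r_1}}\le CE$, i.e.\ the base is $\ge 1$, and then enlarging the exponent produces no decay whatsoever; no factor of the form $(r_1/r_2)^{c\log^{-1/10}r_1}$ appears anywhere in your chain of inequalities, so the ``absorption'' step is vacuous and your argument stops at $\f_i(\mu^{*(l_{r_2})})\le (CE)^2$, which still depends on $E$. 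A second, related problem is the trivial representation: for the index with $u_i$ near the origin, $\chi_0$ occurs in $\f_i$ with multiplicity bounded below, and $\chi_0(\mu^{*(l)})=1$ for every $l$, so $\f_i(\mu^{*(l_{r_2})})$ cannot be driven below a fixed positive constant by any decay mechanism; your claimed final bound ``$\le 1$ by absorbing everything'' ignores this non-decaying contribution (and indeed $\l_{\max}=1$ whenever $\chi_0\sqsubseteq\f_i$, which is precisely why your bound on $\l_{\max}$ is useless).

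The paper's proof supplies the two missing ingredients. First, it separates the trivial character, whose total contribution $m_{\chi_0}(\f_i)$ is bounded by a constant depending only on $G$; this is the source of the $G$-dependent constant $C$ in the statement. Second, for each nontrivial $v$ with $m_{\chi_v}(\f_i)>0$ it extracts a genuinely decaying eigenvalue bound $\l^{l_{r_1}}\le CE/r_1$: for $|v|$ large (but still $O(r_1)$) this follows by applying the induction hypothesis (\ref{eq_cmpgoal}) at the smaller scale $r=|v|$ together with $\dim\chi_v\ge c|v|$ and the growth of $l_{r_1}/l_{|v|}$, and for $|v|\le r_0$ it follows from the initial spectral gap $\gap_{r_0}(G,\mu)$, which is built into the choice $C_0>\gap_{r_0}^{-1}(G,\mu)$ in the definition of $l_r$. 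Raising $CE/r_1$ to the power $(l_{r_2}-l_{r_1})/l_{r_1}\ge c(\log r_2-\log r_1)/\log r_1$ and using $\log r_2-\log r_1>\log^{1/3}r_1$ then yields a factor like $e^{-c\log^{1/3}r_1}$, which beats $CE$ for $r_1$ large; this is where the $E$-dependence is removed. Your geometric reduction ($|u_i|\le Cr_1$, hence $\dim\chi_w\le Cr_1^{|R_+|}$ and $\f_i\preceq C\chi_{r_1}$) is correct and matches the paper, but without the per-eigenvalue decay and the splitting off of $\chi_0$ the conclusion cannot be reached.
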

\begin{proof}
Let $v\in \overline{K}\cap I^*$, such that $m_{\chi_v}(\f_i)>0$.
By Weyl's formula, we have $\dim(\chi_v)\le Cr_1^{|R_+|}$, since
$N_i=|R_+|$.
Let $\l$ be an eigenvalue of $\pi_v(\mu)$.
Then by (\ref{eq_lmaxest1})
we get
\[
\l^{l_{r_1}}\le \frac{CE}{\dim \chi_v}.
\]

In fact, we can get a better bound if $|v|\le r_1$.
By a similar argument and using the induction hypothesis
for $r=|v|\le r_1$, we get
\[
\l^{l_{|v|}}\le \frac{CE}{\dim \chi_v}.
\]
Weyl's dimension formula gives $\dim\chi_v\ge c|v|$,
and an easy calculation shows that
\be\label{eq_lmax10}
\l^{l_{r_1}}\le \lf(\frac{CE}{|v|}\r)^{l_{r_1}/l_{|v|}}\le \frac{CE}{r_1}
\ee
if $|v|$ is sufficiently large (depending on $G$ and $E$).

We suppose that $r_0$ is so large that $|v|<r_0$ for those $v$
which are too small for the above argument.
We set $C_0>\gap_{r_0}^{-1}(G,\mu)$, hence for $r_0 \ge |v|\neq 0$ we have
\[
\l^{l_{r_1}}\le e^{-\gap_{r_0}(G,\mu)l_{r_1}}\le e^{-\log^{A+1}(r_1)},
\]
which is stronger than (\ref{eq_lmax10}).

By (\ref{eq_fiest}), the induction hypothesis and Lemma \ref{lm_prec},
we have
\[
\f_i\lf(\mu^{*(l_{r_1})}\r)\le CE,
\]
hence
\[
\f_i\lf(\mu^{*(l_{r_1})}\r)\le m_{\chi_0}(\f_i)
+CE\lf(\frac{CE}{r_1}\r)^{\frac{l_{r_2}-l_{r_1}}{l_{r_1}}}.
\]
Since $\log r_2-\log r_1>\log^{1/3}r_1$ and
\[
\frac{l_{r_2}-l_{r_1}}{l_{r_1}}>c\frac{\log r_2-\log r_1}{\log r_1}
\]
we easily get
\[
\f_i\lf(\mu^{*(l_{r_1})}\r)\le m_{\chi_0}(\f_i)+1
\]
if $r_1$ is sufficiently large (depending on $G$ and $E$)
which was to be proved.
\end{proof}

It is left to estimate the number of $u_i$ for which $N_i$
takes a particular value. This is done with the help of the next lemma.

Denote by $S$ the set of simple roots.
(This is not to be confused with the generating set of the
random walk, which is denoted by $S$ in other sections.)

\begin{lem}\label{lm_roots}
Let $S'\subsetneq S$ be a set of simple roots.
Denote by $R'\subset R_{+}$ the set of all positive roots which can
be expressed as a combination of elements of $S'$.
We have
\[
\frac{|S|-|S'|}{|R_+|-|R'|}\le A(G)-1,
\]
where the value of $A(G)$ is given in Table \ref{tbl_A},
for simple Lie groups and for non-simple ones
it is defined to be $A(G)=\max \{A(H)\}$ where $H$ runs through all simple
quotients.
\end{lem}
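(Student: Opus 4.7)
A direct inspection of Table~\ref{tbl_A} gives $A(G) - 1 = |S|/|R_+|$ for every simple type---equivalently $|R_+|/|S| = h/2$, where $h$ is the Coxeter number, since $\dim\fg = |S|(h+1) = 2|R_+| + |S|$ for simple $\fg$. So for simple $G$ the inequality we wish to prove rearranges to
\[
\frac{|R'|}{|S'|}\;\le\;\frac{|R_+|}{|S|},
\]
valid whenever $\emptyset\neq S'\subsetneq S$; the boundary case $S' = \emptyset$ gives equality trivially in the original formulation.

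The first step is to reduce to simple $G$. If $G$ is semisimple with simple factors $G_j$, then $S$, $R_+$, $S'$, $R'$ split as disjoint unions $\bigsqcup_j S_j$, $\bigsqcup_j R_{+,j}$, $\bigsqcup_j S'_j$, $\bigsqcup_j R'_j$. For each $j$ with $S'_j\subsetneq S_j$ the simple case would yield
\[
\frac{|S_j|-|S'_j|}{|R_{+,j}|-|R'_j|}\;\le\; A(G_j)-1\;\le\; A(G)-1,
\]
and the elementary mediant inequality $\sum a_i/\sum b_i \le \max_i(a_i/b_i)$ (for $b_i > 0$), summed over such $j$, delivers the bound for $G$. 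Within a simple $G$, I would then decompose $S' = \bigsqcup_i S_i$ into connected components of the induced sub-Dynkin diagram. Since simple roots in different components are orthogonal, the positive roots in the span of $S'$ split correspondingly as $R' = \bigsqcup_i R'_i$, and by the same mediant argument it suffices to prove $|R'_i|/|S_i|\le |R_+|/|S|$ for every proper non-empty connected sub-Dynkin diagram $S_i\subsetneq S$.

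This reduces the lemma to a finite case check. The proper connected sub-Dynkin diagrams of each irreducible Dynkin diagram are standard: inside $A_n$ only $A_k$ with $k\le n-1$; inside $B_n$ (resp.\ $C_n$) also $B_k$ (resp.\ $C_k$) with $k\le n-1$; inside $D_n$ also $D_k$ with $k\le n-1$; inside $E_n$ also smaller $D_k$ and $E_k$; inside $F_4$ one finds $A_k, B_k, C_k$ with $k\le 3$; inside $G_2$ only $A_1$. For each such subdiagram $\Phi_i$ one has $|R'_i|/|S_i| = h(\Phi_i)/2$, and inspection of the Coxeter numbers shows $h(\Phi_i) < h(G)$ in every case---for instance the maximal proper connected subdiagrams of $E_8$ are $E_7, D_7, A_7$, with Coxeter numbers $18, 12, 8$, all strictly below $h(E_8) = 30$; the other types are similarly immediate. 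This verification is the only concrete computation in the proof, and it is pure bookkeeping; no serious obstacle arises.
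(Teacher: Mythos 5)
Your proposal is correct and follows essentially the same route as the paper: reduce to a connected diagram via the mediant inequality, identify $A(G)-1=|S|/|R_+|$, and then verify that every (connected) proper sub-Dynkin diagram has root-to-rank ratio no larger than that of the whole diagram, which the paper does by inspecting Table \ref{tbl_A} and you phrase equivalently as $h(\Phi')\le h(G)$ for Coxeter numbers. The only differences are cosmetic (the Coxeter-number bookkeeping and the explicit handling of the $S'_j=S_j$ and $S'=\emptyset$ cases), so no further comment is needed.
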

\begin{proof}
If the Dynkin diagram of $G$ is not connected, we can write
$R_+=R_1\cup\ldots\cup R_n$, where $R_i$ is a system of positive
roots in a root system with connected diagram.
Clearly
\[
\frac{|S|-|S'|}{|R_+|-|R'|}\le\max_{1\le i\le n}
\frac{|S\cap R_i|-|S'\cap R_i|}{|R_i|-|R'\cap R_i|}.
\]
Hence we can assume without loss of generality that the diagram of
$G$ is connected.

By a simple calculation, one can verify that
$|S|/|R_{+}|=A(G)-1$ as given in Table \ref{tbl_A}.
Now notice that $R'$ is itself (the set of positive roots in)
a root system and its diagram is the
subgraph spanned by $S'$ in the diagram of $G$.
Examining Table \ref{tbl_A} it is easy to check that
\[
\frac{|S'|}{|R'|}\ge A(G)-1=\frac{|S|}{|R_{+}|}.
\]
(In fact, it is enough to check that the
value in the table is never smaller for connected subdiagrams.)
Then
\[
\frac{|S|-|S'|}{|R_+|-|R'|}\le
\frac{|S|-|S||R'|/|R_+|}{|R_+|-|R'|}
=\frac{|S|}{|R_{+}|}=A(G)-1.
\]
\end{proof}

Let $S'\subset S$  be a subset of the simple roots.
We write $\O(S')$ for the set of indices $i$ such that
$\langle u_i,v\rangle\le 4Dr_1$ for $v\in S$ if and only if
$v\in S'$.

We estimate $|\O(S')|$.
Since $S'$ consist of linearly independent vectors, the elements
of $\O(S')$ are in a $Cr_1$ neighborhood of a subspace of $\L T$
of dimension $|S|-|S'|$.
Since they are $Dr_1$-separated, we have
\be\label{eq_sizeO}
|\O(S')|\le C\lf(\frac{r_2}{r_1}\r)^{|S|-|S'|}.
\ee

All positive roots are positive linear combinations of simple roots,
hence $N_i\le|R'|$ for $i\in\O(S')$, where $R'$ is the set of positive
roots which are combinations of the elements of $S'$.
If $S'\neq S$, then Lemmata \ref{lm_fiest1} and \ref{lm_roots}
together with (\ref{eq_sizeO}) gives
\[
\sum_{i\in \O(S')}\f_i\lf(\mu^{*(l_{r_2})}\r)\le C
\]
with a constant $C$ depending on $G$.
If $S'=S$, the same follows from Lemma \ref{lm_fiest2}.
Summing this up for all $S'\subset S$, we get
\be\label{eq_findend}
\sum_{i}\f_i\lf(\mu^{*(l_{r_2})}\r)\le C.
\ee
This completes the proof of (\ref{eq_cmpgoal}) for $r=r_2$
with $E=C$, where
$C$ is the constant in (\ref{eq_findend}).

We explain how to set the various parameters and how to complete
the induction.
We set $E=C$ with the constant $C$ from (\ref{eq_findend})
in the previous paragraph.
Then we pick $r_0$
to be sufficiently large (depending on $E$ and $G$)
so that all of the above arguments are
valid with $r_1\ge r_0$.

For $r_0\ge r\ge 1$, we have
\bea
\chi_r\lf(\mu^{*(l_r)}\r)
&\le& m_{\chi_0}(\chi_r)+e^{-l_r\cdot\gap_{Cr}(G,\mu)}
\cdot\sum_{0<|v|\le Cr} m_{\chi_v}(\chi_r)\nonumber\\
&\le& C+Cr^{-C_0\cdot\gap_{Cr}(G,\mu)}
\cdot\sum_{0<|v|\le Cr} \dim(\chi_v)\nonumber\\
&\le& C+Cr^{C(\dim\L T+|R_+|)-C_0\cdot\gap_{Cr}(G,\mu)}\label{eq_smallr}.
\eea
Here we first used the definition of $\gap_r(G,\mu)$,
then the definition of $l_r$ and Lemmata \ref{lm_mult1} and
\ref{lm_mult2}, finally Weyl's dimension formula.

We put
$C_0=C\cdot\gap_{Cr_0}^{-1}(G,\mu)$,
where $C$ is a suitable constant depending on the
constant in (\ref{eq_smallr}) such that
\[
\chi_r\lf(\mu^{*(l_r)}\r)\le E
\]
for $1\le r\le r_0$.
(Recall that the only constraint we had for $C_0$ above is
 in the proof of Lemma \ref{lm_fiest2} and it is satisfied with this
choice.)

Thus we see that (\ref{eq_cmpgoal}) hold for $1\le r\le r_0$.
Once we know that (\ref{eq_cmpgoal}) holds on an interval $r\in [1,a]$,
we can extend it to $r\in [1,e^{\log(a)+\log^{1/2}(a)}]$.
This follows from the above argument with the choice
$r_2=r$ and any $r_1\le a$ which satisfies
\[
\log^{1/3}r_1\le\log r_2-\log r_1\le\log^{1/2}r_1.
\]
If we apply this repeatedly, we can conclude that (\ref{eq_cmpgoal})
holds for all $r\ge1$.

Finally, we conclude the proof of the theorem.
Fix $r\ge 1$ and
suppose that $\pi_{v_0}$ is the representation for which the maximum
in the definition of $\gap_r(G,\mu)$ is attained.
We use Lemma \ref{lm_mult3} with $z=u=v_0$ and (\ref{eq_cmpgoal}) to get
\bean
\chi_{v_0}\lf(\mu^{*(l_r)}\r)
&\le& C\frac{\max_{w:|v_0-w|<3Dr}\{\dim (\chi_w)\}}
{\dim(\chi_{v_0})\cdot r^{\dim G}}\cdot\psi_{v_0,r}\lf(\mu^{*(l_r)}\r)\\
&\le& CE\frac{(\max_{w:|v_0-w|<3Dr}\{\dim (\chi_w)\})
\cdot\sum_{w:|v_0-w|<3Dr}\dim(\chi_w)}
{\dim(\chi_{v_0})\cdot r^{\dim G}}.
\eean
We evaluate dimensions using Weyl's formula and get
\[
\lf\|\pi_{v_0}\lf(\mu^{*(l_r)}\r)\r\|
\le\chi_{v_0}\lf(\mu^{*(l_r)}\r)
\le CE/\dim(\chi_{v_0})
\le CE/r\le r^{-1/2},
\]
if $r\ge r_0$ and $r_0$ is sufficiently large, as we may assume.
This implies
\[
\gap_{r}(G,\mu)\ge(1/10) \frac{\log r}{l_r}.
\]
Inspecting the definition of $l_r$ and the above choice of $C_0$,
we see that this is exactly what was to be proved.

\section{Some technicalities}\label{sc_lip}

We begin this section by proving Corollary \ref{cr_comp}.
First we give a lemma which will be used for reducing the
problem to the connected case:

\begin{lem}
\label{lm_connected}
Let $G$ be a Lie group
and let $\mu$ be a symmetric probability measure
on it such that $\supp{\mu}$
generates a dense subgroup and $1\in\supp\mu$.
Write $G^{\circ}$ for the connected component of $G$ and let
$n=[G:G^\circ]$.
Then $\supp(\mu^{*(2n-1)})\cap G^\circ$ generates a dense subgroup
of $G^{\circ}$.
\end{lem}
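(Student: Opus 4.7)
The plan is to combine Schreier's lemma on coset representative generators with a density argument. Write $H=\supp\mu$, which is symmetric and contains $1$, and let $\Gamma = \langle H \rangle$, which by hypothesis is dense in $G$. Set $\Gamma^\circ = \Gamma\cap G^\circ$. Since $G^\circ$ is normal in $G$, the subgroup $\Gamma^\circ$ is normal in $\Gamma$, and since $\Gamma$ is dense in $G$ it surjects onto the discrete quotient $G/G^\circ$, so $[\Gamma:\Gamma^\circ]=n$. Note $\supp(\mu^{*(2n-1)})\supseteq H^{2n-1}$, so it suffices to show that $H^{2n-1}\cap G^\circ$ generates a dense subgroup of $G^\circ$.

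First I would produce short words reaching each coset. The image $\bar H\subseteq G/G^\circ$ generates this finite group of order $n$ and contains $1$, so with respect to $\bar H$ the Cayley graph of $G/G^\circ$ has diameter at most $n-1$. Because $1\in H$, I can pad any shorter word and choose a set $R\subseteq H^{n-1}$ of coset representatives for $\Gamma^\circ$ in $\Gamma$ (with $1\in R$). Applying Schreier's lemma, $\Gamma^\circ$ is generated by the elements $r\cdot h\cdot \overline{rh}^{-1}$ for $r\in R$ and $h\in H$. Since $R\subseteq H^{n-1}$, $H=H^{-1}$, and $\overline{rh}\in R\subseteq H^{n-1}$, each such Schreier generator lies in $H^{n-1}\cdot H\cdot H^{n-1}=H^{2n-1}$, and also in $G^\circ$. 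Hence $\langle H^{2n-1}\cap G^\circ\rangle = \Gamma^\circ$.

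Next I would prove that $\Gamma^\circ$ is dense in $G^\circ$. Let $L=\overline{\Gamma^\circ}$, a closed subgroup of $G^\circ$. Since $\Gamma^\circ$ is normal in $\Gamma$, conjugation gives $\gamma L\gamma^{-1}=L$ for every $\gamma\in\Gamma$; by continuity this extends to all $\gamma\in\overline\Gamma=G$, so $L$ is normal in $G$. The quotient $G/L$ is a Hausdorff topological group, and the image of $\Gamma$ in it is finite (being a quotient of $\Gamma/\Gamma^\circ$, which has order $n$) and dense. A Hausdorff space with a finite dense subset is finite, so $G/L$ is finite and $L$ is open. But $L\subseteq G^\circ$ is an open subgroup contained in the connected set $G^\circ$, hence $L=G^\circ$.

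Combining the two steps, the subgroup generated by $\supp(\mu^{*(2n-1)})\cap G^\circ$ contains $\Gamma^\circ$, which is dense in $G^\circ$, proving the lemma. The only nontrivial step is the Schreier generator bookkeeping — showing that by choosing coset representatives inside $H^{n-1}$ we can realise the generators of $\Gamma^\circ$ inside $H^{2n-1}$ — and this is where the symmetry of $\mu$ and the condition $1\in\supp\mu$ both enter essentially.
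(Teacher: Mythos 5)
Your proof is correct, but it follows a genuinely different route from the paper's. The paper argues by contradiction and induction on word length: it takes a word $g_1\cdots g_l$ in $\supp\mu$ approximating a given element of $G^\circ$, uses the pigeonhole principle to find a subword $g_i\cdots g_j$ with $j\le n$ lying in $G^\circ$, conjugates it to the front to produce an element of $\supp(\mu^{*(2n-1)})\cap G^\circ$ (this is where symmetry and $1\in\supp\mu$ enter), and peels it off, continuing with the strictly shorter remaining word. You instead separate the statement into an algebraic part and a topological part: Schreier's lemma with a transversal chosen inside $H^{n-1}$ (possible because the ball of radius $n-1$ in the symmetric set $\bar H\ni 1$ covers the order-$n$ group $G/G^\circ$) shows that $H^{2n-1}\cap G^\circ$ already generates $\Gamma\cap G^\circ$, and a density argument shows $\Gamma\cap G^\circ$ is dense in $G^\circ$. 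Both arguments are sound and of comparable length; yours is more modular and makes the role of the bound $2n-1$ transparent (transversal, generator, inverse transversal), while the paper's is self-contained word combinatorics needing no group-theoretic input. One remark: your density step is more elaborate than necessary — since $G^\circ$ is open in the Lie group $G$, every nonempty open subset of $G^\circ$ is open in $G$ and hence meets the dense subgroup $\Gamma$, so $\Gamma\cap G^\circ$ is dense in $G^\circ$ immediately, without introducing $L=\overline{\Gamma\cap G^\circ}$, its normality in $G$, or the finiteness of $G/L$ (that argument is nonetheless correct).
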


\begin{proof}
Suppose to the contrary
that for some $h\in G^\circ$ and $\e>0$
there is no $h'$ in the group generated
by $\supp(\mu^{*(2n-1)})\cap G^\circ$ such that $\dist(h,h')<\e$.
On the other hand, by assumption, there is $g=g_1\cdots g_l$
with $g_i\in\supp(\mu)$ and $\dist(h,g)<\e$.

We show that $g$ is in the group generated by
$\supp(\mu^{*(2n-1)})\cap G^\circ$, a contradiction.
If $l\le 2n-1$, then by definition, $g\in\supp(\mu^{*(2n-1)})\cap G^\circ$.
Suppose $l>2n-1$.
By the pigeon hole principle, there are $i\le j\le n$ such that
$g_i\cdots g_j\in G^\circ$.
We can write $g=g'\cdot g''$, where
\bean
g'&=&g_1\cdots g_{i-1}g_i\cdots g_jg_{i-1}^{-1}\cdots g_{1}^{-1}
\quad {\rm and}\\
g''&=&g_1\cdots g_{i-1} g_{j+1}\cdots g_l.
\eean
Now clearly $g',g''\in G^\circ$, and
$g'\in\supp(\mu^{*(2n-1)})$, since
it is a product of length at most $2n-1$.
Since the length of $g''$ is strictly less than that of $g$ the proof
can be completed by induction.
\end{proof}

If $G$ is a compact connected semi-simple Lie group,
the space $L^2(G)$ can be decomposed as an orthogonal sum of
finite dimensional irreducible representations.
We write $\HH_r\subset L^2(G)$ for the sum of those constituents
which have highest weight $v$ with $|v|\le r$.
In the next lemma we construct an approximate identity in $\HH_r$.

\begin{lem}\label{lm_approxid}
Let $G$ be a compact connected semi-simple Lie group.
Then for each $r$, there is a non-negative function $f_r\in\HH_r$
such that
\[
\int f_r\igap  dm_G=1, \quad
\|f_r\|_2\le Cr^{\dim G/4} \quad{\rm and}\quad
\int f_r(g)\dist(g,1)\igap  dm_G(g)\le C/\sqrt r, 
\]
where $C$ is a constant depending on $G$.
\end{lem}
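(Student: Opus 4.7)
My plan is to construct $f_r$ as the normalized fourth power of a Dirichlet-type reproducing kernel concentrated in a spectral window of width $\rho\asymp\sqrt r$, exploiting that squaring twice produces a non-negative element of $\HH_r$ via Lemma \ref{lm_possible}. Write $d=\dim G$, let $D$ be the constant from Lemma \ref{lm_possible}, and choose a small constant $c=c(G)>0$ so that $\rho:=c\sqrt r$ satisfies $(1+D)\rho\le r$. (The finitely many small $r$ for which this fails can be handled by taking $f_r$ equal to any fixed smooth non-negative bump on $G$.) I would define
\[
K(g):=\sum_{v\in I^*\cap\overline K,\ |v|\le\rho/2}\dim(\pi_v)\chi_v(g),
\]
the reproducing kernel of $\HH_{\rho/2}$ at the identity, set $F_r:=K^2/\|K^2\|_2$, and finally put $f_r:=F_r^2$. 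Then $f_r\ge 0$, $\int f_r\,dm_G=\|F_r\|_2^2=1$, and two applications of Lemma \ref{lm_possible} place $f_r\in\HH_{(1+D)\rho}\subset\HH_r$.

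For the $L^2$ bound I would use $\|f_r\|_2^2=\|F_r\|_4^4\le\|F_r\|_\infty^2\|F_r\|_2^2=\|F_r\|_\infty^2$ together with $\|F_r\|_\infty=K(1)^2/\|K^2\|_2$. Weyl's dimension formula gives $K(1)=\sum_{|v|\le\rho/2}(\dim\pi_v)^2\asymp\rho^d$. The matching lower bound $\|K^2\|_2\gea \rho^{3d/2}$ follows from a pointwise estimate $K(g)\ge cK(1)$ on a ball $B_{c'/\rho}(1)$, which I would derive from the Weyl character formula and the analyticity of the $\chi_v$ near $1$, combined with $\mathrm{vol}(B_{c'/\rho}(1))\asymp\rho^{-d}$. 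These together give $\|F_r\|_\infty\lea\rho^{d/2}$, hence $\|f_r\|_2\lea\rho^{d/2}\lea r^{d/4}$.

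The main technical obstacle is the distance estimate. The required bound $\int f_r(g)\dist(g,1)\,dm_G(g)\lea 1/\sqrt r$ is equivalent to
\[
\int |K(g)|^4\,\dist(g,1)\,dm_G(g)\lea \rho^{-1}\|K^2\|_2^2\asymp\rho^{3d-1}.
\]
I plan to establish this via the pointwise Dirichlet-type decay $|K(g)|\lea\min(\rho^d,\dist(g,1)^{-d})$, which follows from the Weyl character formula after summation-by-parts along the lattice $I^*\cap\overline K$, combined with direct integration over $G$ split into the scales $\dist(g,1)<\rho^{-1}$ and $\dist(g,1)\ge\rho^{-1}$, in exact analogy with the classical calculation of the first moment of $|D_N|^4$ on the torus.
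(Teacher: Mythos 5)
Your construction (normalized fourth power of the spectral reproducing kernel $K=\sum_{|v|\le\rho/2}\dim(\pi_v)\chi_v$) is sound as far as non-negativity, normalization, membership in $\HH_r$, and the $L^2$ bound go: the lower bound $K\ge K(1)/2$ on a ball of radius $c'/\rho$ does follow from a gradient bound linear in $|v|$, and it gives $\|K^2\|_2\gea\rho^{3\dim G/2}$ as you say. The genuine gap is at the step you yourself single out as the main obstacle: the claimed pointwise decay $|K(g)|\lea\min(\rho^{\dim G},\dist(g,1)^{-\dim G})$ is false, already for $G=\SU(2)$. There $K_N(\theta)=\frac{1}{\sin\theta}\sum_{n\le N}n\sin(n\theta)$, and at angles with $N\theta$ an odd multiple of $\pi$ one computes $\sum_{n\le N}n\sin(n\theta)\asymp N/\theta$, hence $|K_N(\theta)|\asymp N/\theta^2$, which exceeds $\theta^{-3}$ by a factor $\asymp N\theta$ once $\theta\gg 1/N$; moreover $|K_N|\asymp N^2$ in a $1/N$-neighbourhood of the central element $-1$, where $\dist(g,1)^{-3}\asymp1$. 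The analogy with the one-dimensional Dirichlet kernel breaks down because your spectral window is a ball in a lattice of rank $\ge1$ with polynomial (Weyl dimension) weights, divided by the Weyl denominator: such sums do not obey $\dist^{-\dim G}$ decay, and they are large near the singular set and near nontrivial central elements, not only near $1$. In rank one the true bound $\min(N^3,N/\theta^2)$ happens to still close the $L^4$ first-moment computation, but for general $G$ no argument is given, and the required estimate $\int|K|^4\dist(g,1)\,dm_G\lea\rho^{3\dim G-1}$ would need a genuinely more careful character/lattice-sum analysis (including the contribution of neighbourhoods of singular and central elements). So as written the distance estimate, and hence the lemma, is not proved.

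Two minor points: your fallback for small $r$ (a fixed smooth bump) is not in $\HH_r$; use the constant function $1$ instead, which satisfies all three bounds for bounded $r$ since $C$ may depend on $G$. Also the containment after squaring twice is $\HH_{(1+D)^2\rho/2}$ rather than $\HH_{(1+D)\rho}$, which is harmless since $\rho\asymp\sqrt r$. For comparison, the paper avoids all oscillatory issues by a much simpler device: it takes a fixed faithful representation $\pi$ with real character $\chi$ and $m=\dim\pi$, sets $f_r=c_r(\chi+m)^{\lfloor r/r_0\rfloor}$, and uses $m-c_1\dist(g,1)^2\le\chi(g)\le m-c_2\dist(g,1)^2$ to get Gaussian-type concentration at scale $1/\sqrt r$; membership in $\HH_r$ is immediate from the additivity of highest weights under tensor powers, and both the $L^2$ bound and the first moment follow from a one-line Laplace-type integral. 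If you want to keep your kernel construction, you would either need sharp bounds for $K$ on the maximal torus (splitting according to which roots $\langle\alpha,H\rangle$ are small) or replace the pointwise approach by an argument that bounds $\int|K|^4\dist^2$ spectrally, e.g. via $\dist(g,1)^2\lea m-\chi(g)$ for a fixed faithful $\pi$.
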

\begin{proof}
We fix a maximal torus $T\subset G$.
Let $\pi$ be a faithful (not necessarily irreducible) finite dimensional
unitary representation of $G$ with real character $\chi$.
We can decompose the representation space as the sum of weight spaces,
i.e. there is an orthonormal basis $\f_1,\ldots,\f_m$
(where $m=\dim \pi$) such that the following holds.
For each $\f_i$, there is a weight $u_j\in I^*$ such that for
the elements $g\in T$ we have
$\pi(g)\f_j=e^{2\pi i\langle\log g, u_j\rangle}\f_j$.
(Here $\log: T\to \L T$ is a branch of the inverse of the exponential map.)
Since $\chi$ is real, we have
\[
\chi(g)=\sum_{j=1}^m e^{2\pi i\langle\log g, u_j\rangle}
=\sum_{j=1}^m \cos(2\pi\langle\log g, u_j\rangle).
\]

Since $\pi$ is faithful, $\chi(g)<m$ for $g\neq1$ and we can deduce
from the above formula that
\be\label{eq_chiest}
m-c_1\dist(g,1)^2\le \chi(g)\le m-c_2\dist(g,1)^2,
\ee
where $c_1,c_2>0$ are constants depending on $G$.

Denote by $r_0$ the length of the highest weight of the irreducible
components of $\pi$.
We define
\[
f_r(g)=c_r(\chi(g)+m)^{\lfloor r/r_0\rfloor},
\]
where $c_r$ is a normalizing constant so that $\int f_r\igap  dm_G=1$.
Observe that $f_r\in\HH_r$.
By simple calculation based on (\ref{eq_chiest}),
we get
\[
c \frac{\lfloor r/r_0\rfloor^{\dim G/2}}{(2m)^{\lfloor r/r_0\rfloor}}
\le c_r\le C \frac{\lfloor r/r_0\rfloor^{\dim G/2}}
{(2m)^{\lfloor r/r_0\rfloor}},
\]
where $c,C>0$ are numbers depending on $G$.

We have
\[
\|f_r\|_\infty=c_r(2m)^{\lfloor r/r_0\rfloor}\le Cr^{\dim G/2}.
\]
The $L^2$ bound now follows from
$\|f_r\|_2^2\le\|f_r\|_1\|f_r\|_\infty$.

Now using again (\ref{eq_chiest}), we get
\bean
&&\!\!\!\!\!\!\!\!\!\!\!\!\!\!\!\!\!\!\!\!\!\!\!\!\!\!\!\!\!\!\!
\int f_r(g)\dist(1,g)\igap dm_G(g)\\
&\le&
c_r\int (2m-c_2\dist(g,1)^2)^{\lfloor r/r_0\rfloor}\dist(1,g)\igap dm_G(g)\\
&\le& C \lfloor r/r_0\rfloor^{\dim G/2}
\int e^{-c_3\dist(g,1)^2\lfloor r/r_0\rfloor}\dist(1,g)\igap dm_G(g)\\
&\le& C \lfloor r/r_0\rfloor^{\dim G/2}
\int_{\R^{\dim G}}
e^{-c_3|x|^2\lfloor r/r_0\rfloor}|x|\igap dx\\
&=& C \lfloor r/r_0\rfloor^{-1/2}
\int_{\R^{\dim G}}
e^{-c_3|y|^2}|y|\igap dy
\le C/\sqrt{r},
\eean
which was to be proved.
\end{proof}

\begin{proof}[Proof of Corollary \ref{cr_comp}]
Write $\Reg(g)f(h)=f(g^{-1}h)$ for $f\in L^2(G)$,
which is the left regular representation
of $G$.

Assume to the contrary that $f\in \Lip (G)$, $\int f=0$, $\|f\|_2=1$
and yet
\be\label{eq_corcontra}
\lf\|\int \Reg(g)f\igap d\mu(g)\r\|_2=\|\Reg(\mu)f\|_2
\ge 1-c_0\log^{-A}(\|f\|_{\Lip}+2)
\ee
with a constant $c_0$ which will be chosen to be sufficiently small
depending on $\mu$.

By the same argument as in the beginning of Section
\ref{sc_fin}, we have
$\|\Reg(\mu)f\|_2^2=\|\Reg(\wt\mu*\mu)f\|_2$.
Thus (\ref{eq_corcontra}) holds (with a different $c_0$) for
$\mu$ replaced by $\wt\mu*\mu$, hence we can assume that
$\mu$ is symmetric and $1\in\supp\mu$.

Write $n=[G:G^\circ]$ as in Lemma \ref{lm_connected}.
Furthermore, we define $\mu_1$ to be the probability measure
on $G^\circ$ that we obtain from $\mu^{*(2n-1)}$ by restricting it to
$G^\circ$ and then normalizing it.
By Lemma \ref{lm_connected}, we can apply Theorem \ref{th_comp}
for the measure $\mu_1$.
Now we need to find a suitable test function related to $f$.

First, we want to rule out the possibility that $f$ is ``almost constant"
on cosets of $G^\circ$.
It follows from (\ref{eq_corcontra}) that there is a set $X\subset G$
with $\mu(X)>1-\e$ such that
\be\label{eq_Xprop}
\|f-\Reg(g)f\|_2<1/(10n)
\ee
where
$\e>0$ is as small as we wish, if we choose $c_0$
in (\ref{eq_corcontra}) sufficiently small.
(In fact, we could obtain a much stronger estimate from
(\ref{eq_corcontra})).
In particular, we can ensure that $X\cdot G^{\circ}$
generates $G/G^\circ$.

Since $\int f\igap dm_G=0$,
\[
\int \langle\Reg(g)f,f\rangle \igap d m_G(g)=0.
\]
Hence there is $g\in G$ such that $\langle\Reg(g)f,f\rangle\le 0$,
in particular $\|\Reg(g)f-f\|_2\ge\sqrt2$.
We can write $g=h_1\cdots h_ng_0$, where $h_i\in X$ and
$g_0\in G^\circ$.
By the triangle inequality, either
\be\label{eq_case1}
\|\Reg(h_1\cdots h_ng_0)f-\Reg(h_1\cdots h_n)f\|_2>1
\ee
or
\be\label{eq_case2}
\|\Reg(h_1\cdots h_j)f-\Reg(h_1\cdots h_{j-1})f\|_2>(\sqrt2-1)/n
\ee
for some $1\le j\le n$.
Since $\Reg$ is unitary, the second case yields
$\|\Reg(h_j)f-f\|_2>(\sqrt2-1)/n$ which is a contradiction to
(\ref{eq_Xprop}).
Thus only the first case is possible, from which we conclude
$\|\Reg(g_0)f-f\|_2>1$ again by unitarity.

This shows that $f$ can not be ``almost constant" on all cosets of
$G^\circ$.
Let $x_1,\ldots,x_n\in G$ be a system of representatives
for $G^\circ$-cosets.
Write $f_i$ for the restriction of $f$ to the coset $G^\circ \cdot x_i$
considered a function on $G^\circ$.
More formally:
\[
f_i(g)=f(gx_i)\in L^2(G^\circ).
\]

We have
\[
\|\Reg(g_0)f-f\|_2^2=\sum_{i=1}^n\|\Reg(g_0)f_i-f_i\|_2^2,
\]
hence there is $1\le i_0\le n$
such that 
$\|\Reg(g_0)f_{i_0}-f_{i_0}\|_2\ge 1/\sqrt n$.

We define
\[
\f=\frac{f_{i_0}-\int f_{i_0}\igap  dm_G}
{\|f_{i_0}-\int f_{i_0} \igap dm_G\|_2}
\]
aiming to estimate $\|\Reg(\mu_1)\f\|_2$ using Theorem \ref{th_comp}.

It follows from the above considerations that
$\|f_{i_0}-\int f_{i_0}\igap  dm_G\|_2\ge1/(2\sqrt n)$.
Thus $\|\f\|_\Lip<2\sqrt{n}\|f\|_\Lip$.

{}From Lemma \ref{lm_connected}, we know that $\supp{\mu_1}$ is
not contained in a proper closed subgroup of $G^\circ$.
Thus we have
\[
\gap_r(G^\circ,\mu_1)>0
\]
for all $r$.
Then Theorem \ref{th_comp} implies that
\[
\gap_r(G^\circ,\mu_1)>c\log^{-A(G)} r
\]
with a constant $c>0$ depending on $\mu$.

To apply this spectral gap estimate, we need to approximate $\f$
by a function in $\HH_r$ with small $r$.
We use Lemma \ref{lm_approxid} with $r=D(\|\f\|_\Lip+2)^4$; we
will chose the sufficiently large number $D$ later.
Then the Lemma gives:
\[
\|f_{r}*\f-\f\|_\infty
\le\int f_{r}(g)\dist(g,1)\|\f\|_{\Lip}\igap dm_G(g)
\le \frac{C}{D^{1/2}(\|\f\|_\Lip+2)}.
\]

Clearly $f_{r}*\f\in \HH_r$, and moreover
\[
\int f_{r}*\f\igap dm_G=0.
\]
Thus
\bean
\|\Reg(\mu_1)\f\|_2&\le& \|\Reg(\mu_1)(f_{r}*\f)\|_2
+\frac{C}{D^{1/2}(\|\f\|_\Lip+2)}\\
&\le&1-\gap_{r}(G^\circ,\mu_1)
+\frac{C}{D^{1/2}(\|\f\|_\Lip+2)}\\
&\le&1-c\log^{-A(G)}(D^{1/2}(\|\f\|_\Lip+2))+
\frac{C}{D^{1/2}(\|\f\|_\Lip+2)}.
\eean
Now we choose $D$ sufficiently large depending on $\mu$ so that
for the quantities in the last line we have
\[
c\log^{-A(G)}(D^{1/2}(\|\f\|_\Lip+2))
\ge \frac{2C}{D^{1/2}(\|\f\|_\Lip+2)}.
\]
This in turn implies with a different constant $c$:
\[
\|\Reg(\mu_1)\f\|_2\le1-c\log^{-A(G)}(\|\f\|_\Lip+2).
\]

Furthermore, by the definition of $\f$, we have
\[
\|\Reg(\mu_1)f_{i_0}\|_2\le\|f_{i_0}\|_2-c\log^{-A(G)}(\|f\|_\Lip+2).
\]
This yields
\[
\|\Reg(\mu_1)f\|_2\le 1-c\log^{-A(G)}(\|f\|_\Lip+2).
\]
By selfadjointness
\[
\|\Reg(\mu)f\|_2^{2n-1}\le\|\Reg(\mu^{*(2n-1)})f\|_2
\le 1-  c'(1- \|\Reg(\mu_1)f\|_2),
\]
where $c'=\mu^{*(2n-1)}(G^\circ)$ is the normalizing constant
in the definition of $\mu_1$.
This is a contradiction to (\ref{eq_corcontra}),
if we choose there $c_0$ to be sufficiently small.
\end{proof}

The rest of the section is devoted to the proof of Lemma \ref{lm_folkcomp}.
We start with a Lemma which provides an estimate on the Lipschitz
norm of a function in $\HH_r$.

\begin{lem}\label{lm_lip}
Let $G$ be a connected compact semi-simple Lie group and
let $f\in\HH_r$ with $\|f\|_2=1$.
Then $\|f\|_\Lip\le Cr^{\dim G/2+1}$,
where $C$ is a constant depending on $G$.
\end{lem}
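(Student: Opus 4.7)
The plan is to establish two auxiliary estimates for $f\in\HH_r$ and then combine them. The first is a Sobolev-type bound $\|f\|_\infty\le Cr^{\dim G/2}\|f\|_2$, and the second is a derivative bound $\|Xf\|_2\le Cr\|f\|_2$ for every unit vector $X\in\fg$. Since $\HH_r$ is a sum of isotypical components for the right regular representation, it is preserved by $X$, so $Xf\in\HH_r$ and the two estimates combine to
\[
|Xf(g)|\le\|Xf\|_\infty\le Cr^{\dim G/2}\|Xf\|_2\le Cr^{\dim G/2+1}\|f\|_2
\]
for all $g\in G$. Since the metric is bi-invariant Riemannian, $\|f\|_\Lip$ equals the supremum of $|Xf(g)|$ over $g\in G$ and unit $X\in\fg$, which yields the claimed bound.

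For the Sobolev-type inequality, I would use Peter--Weyl to decompose $f=\sum_{|v|\le r}f_v$ into its isotypical components. Writing $f_v=\sum a^v_{ij}\sqrt{\dim\pi_v}\,m^v_{ij}$ in the orthonormal basis of matrix coefficients supplied by Schur orthogonality, two applications of Cauchy--Schwarz together with the unitarity of $\pi_v$ (which gives $\|\pi_v(g)\|_{\rm HS}^2=\dim\pi_v$) yield $|f_v(g)|\le\dim(\pi_v)\|f_v\|_2$. A further Cauchy--Schwarz over $v$ and Weyl's dimension formula then give
\[
\|f\|_\infty^2\le\dim(\HH_r)\cdot\|f\|_2^2=\Bigl(\sum_{|v|\le r}(\dim\pi_v)^2\Bigr)\|f\|_2^2\le Cr^{\dim G}\|f\|_2^2,
\]
using the identity $\dim G=\dim\L T+2|R_+|$ to collect the contributions of the $r^{|R_+|}$ factors in $\dim\pi_v$ and the $r^{\dim\L T}$ values of $v$.

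For the derivative bound, I would interpret $Xf$ as the infinitesimal generator of the right regular representation applied to $f$: on a matrix coefficient $m^v_{ij}$ it becomes a linear combination of matrix coefficients of $\pi_v$ with coefficients coming from $d\pi_v(X)$. Hence $Xf\in\HH_r$ and $\|Xf\|_2\le\max_{|v|\le r}\|d\pi_v(X)\|_{\rm op}\cdot\|f\|_2$. The desired bound then reduces to the representation-theoretic fact that $\|d\pi_v(X)\|_{\rm op}\le C|X|\cdot|v|$. When $X\in\L T$ this is transparent, because $d\pi_v(X)$ is diagonal with eigenvalues $2\pi i\langle X,w\rangle$ over the weights $w$ of $\pi_v$, each contained in the convex hull of the Weyl orbit of $v$ and hence of norm at most $|v|$; for a root vector one decomposes $\fg=\L T\oplus\bigoplus_\alpha\fg_\alpha$ and uses that inside each $\fsl_2$-triple the raising and lowering operators have operator norm bounded by $O(|v|)$ on $\pi_v$, since they shift a weight only by a root of bounded norm while all relevant weights themselves are of norm $O(|v|)$.

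The main obstacle will be the uniform operator-norm estimate $\|d\pi_v(X)\|_{\rm op}\le C|X|\cdot|v|$ for arbitrary $X\in\fg$: the diagonal case is immediate, but the root-vector case needs a short reduction via $\fsl_2$-triples. Once this is in hand, the remaining ingredients are routine manipulations with Peter--Weyl, Schur orthogonality and Weyl's dimension formula.
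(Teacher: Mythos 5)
Your argument is correct, but it is organized differently from the paper's. The paper never differentiates: it reduces the Lipschitz bound to increments $f(tg)-f(g)$ with $t$ in a fixed maximal torus $T$ (every element of $G$ lies in a maximal torus), expands $f$ in an orthonormal basis of weight vectors for left translation by $T$, uses $|e^{2\pi i\langle\log t,u\rangle}-1|\le C|u|\dist(1,t)$ with all occurring weights satisfying $|u|\le r$, and finishes with Cauchy--Schwarz against $\dim\HH_r\le Cr^{\dim G}$. You instead prove two separate inequalities --- the $L^\infty$--$L^2$ bound $\|h\|_\infty\le(\dim\HH_r)^{1/2}\|h\|_2$ on $\HH_r$ and the Bernstein-type bound $\|Xh\|_2\le Cr\|h\|_2$ --- and compose them; this is more modular (and gives the sup-norm bound on $\HH_r$ as a useful by-product), but it requires the extra representation-theoretic input $\|d\pi_v(X)\|_{\rm op}\le C|X|\,|v|$ for arbitrary $X\in\fg$, which is precisely what the paper's torus reduction sidesteps. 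Both routes extract the factor $r^{\dim G/2}$ from $\dim\HH_r\le Cr^{\dim G}$ via Weyl's dimension formula and $\dim G=\dim\L T+2|R_+|$, and the factor $r$ from the weights having length $O(r)$. Concerning the step you flag as the main obstacle: you do not need $\fsl_2$-triples. The operator norm is conjugation-invariant, $\|d\pi_v(\mathrm{Ad}(g)X)\|_{\rm op}=\|\pi_v(g)\,d\pi_v(X)\,\pi_v(g)^{-1}\|_{\rm op}=\|d\pi_v(X)\|_{\rm op}$, and every $X\in\fg$ is $\mathrm{Ad}(G)$-conjugate to an element of $\L T$ of the same length because the inner product is $\mathrm{Ad}$-invariant; so the general case reduces to the diagonal case you already settled, where the eigenvalues are $2\pi i\langle w,X\rangle$ with $|w|\le|v|$. (If you prefer to keep the root-vector route, it does work, but remember to absorb the additive constants using that nonzero $v\in I^*$ have $|v|$ bounded below.)
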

\begin{proof}
We fix a maximal torus $T$ and write
\[
\|f\|_{\Lip(T)}=\max_{t\in T,g\in G}
\lf\{\frac{f(tg)-f(g)}{\dist(1,t)}\r\}
\]
for $f\in\Lip(G)$.
Since every element of $G$ is contained in a maximal torus, it is
enough to bound this new semi-norm which a priori could be smaller.

As in the proof of Lemma \ref{lm_approxid},
we decompose $\HH_r$ as the sum of weight spaces for
$T$.
There is an orthonormal basis $\f_1,\ldots,\f_m\in\HH_r$
(where $m=\dim \HH_r$) such that for $g\in T$ we have
\[
\pi(g)\f_j=e^{2\pi i\langle\log g, u_j\rangle}\f_j,
\]
where $u_j\in I^*$ is a weight and $|u_j|\le r$.
Thus
$\|\f_j\|_{\Lip(T)}\le Cr$.
We can write $f=\sum\a_j\f_j$ such that $\sum \a_j^2=1$.
Then
\[
\|f\|_{\Lip(T)}\le Cr\sum \a_j\le Cr\sqrt m.
\]
By Weyl's dimension formula it follows that each irreducible
representation in $\HH_r$ is of dimension at most $Cr^{|R_+|}$
and each appears with multiplicity equal to its dimension.
The number of irreducible components is at most $Cr^{\dim T}$.
Putting these together we get $m\le Cr^{\dim G}$ which proves
the Lemma.
\end{proof}

\begin{proof}[Proof of Lemma \ref{lm_folkcomp}]
First we bound the diameter in terms of the spectral gap.
Let $\e>0$ be a number, and $g_0\in G$.
Suppose that for some integer $l$, there is no $g\in S^l$
such that $\dist(g,g_0)\le\e$.

We fix a number $D$ that we will specify later, and 
write $r=D\e^{-2\dim G-2}$ and let $f_r$ be the approximate identity
constructed 
in Lemma \ref{lm_approxid}.
Then
\bea
\mathop{\int}_{\dist(g,g_0)<\frac{\e}{2}}\!\!\!\!\!\!
\Reg(\mu_S)^l f_r(g)\igap dm_G(g)\!
&=&\!\!\mathop{\int}_{\dist(g,g_0)<\frac{\e}{2}}
\sum_{h\in S^l}\mu_S^{*(l)}(h)f_r(h^{-1}g)\igap dm_G(g)\nonumber\\
&\le&\mathop{\int}_{\dist(g,1)>\frac{\e}{2}}f_r(g)\igap dm_G(g)
\le \frac{C}{\e \sqrt r}\label{eq_C}.
\eea
For the inequality between the first and second lines, we used that
$\dist(h,g)\ge\e/2$, hence
$\dist(1,h^{-1}g)\ge\e/2$.

On the other hand, we have
\[
\|1-\Reg(\mu_S)^l f_r\|_2\le C r^{\dim G/4}e^{-l\cdot\gap_r(G,S)}.
\]
(Recall $\|f_r\|_2\le C r^{\dim G/4}$ from Lemma \ref{lm_approxid}.)
Hence
\bea
\mathop{\int}_{\dist(g,g_0)<\frac{\e}{2}}\Reg(\mu_S)^l f_r(g)\igap dm_G(g)
&\ge&\mathop{\int}_{\dist(g,g_0)<\frac{\e}{2}}1\igap dm_G(g) 
-\frac{C r^{\dim G/4}}{e^{l\cdot\gap_r(G,S)}}\nonumber\\
&\ge& c\e^{\dim G}\label{eq_c}.
\eea
provided
\[
l>\frac{\log\lf(2Cr^{\dim G/4}/(c\e^{\dim G})\r)}{\gap_r(G,S)}
\ge C'\cdot\frac{\log(\e^{-1})}{\gap_r(G,S)},
\]
where $C'$ depends on $G$ and $D$.
Now we choose $D$ such that
\[
\frac{C}{\e \sqrt r}\le c\e^{\dim G},
\]
where $C$ and $c$ are the constants form (\ref{eq_C}) and (\ref{eq_c}),
respectively.
This is impossible.
We can conclude
\[
\diam(G,S)\le \frac{C\log(\e^{-1})}{\gap_{D\e^{-2\dim G-2}}(G,S)}.
\]

Now we estimate the spectral gap in terms of the diameter.
This argument was communicated to me by Jean Bourgain.
Let $r>0$ be a number, and set $\e=Dr^{-\dim G/2-1}$,
where $D>0$ depends on $G$ and will be set later.
Let $f\in\HH_r$ and assume that $\|f\|_2=1$ and $\int f=0$.
Then $\int \langle\Reg(g)f,f\rangle \igap dm_G(g)=0$, hence there is
$g\in G$ such that $\langle\Reg(g)f,f\rangle\le0$.
Thus $\|\Reg(g)f-f\|_2\ge\sqrt2$.

Let $l=\diam_{\e}(G,S)$ and $g_0=g_1\cdots g_l\in S^l$ such that
$\dist(g,g_0)\le\e$.
By Lemma \ref{lm_lip} we have
\bean
\|\Reg(g_0)f-f\|_2&\ge&\|\Reg(g)f-f\|_2-\|\Reg(g_0)f-\Reg(g)f\|_2\\
&\ge& \sqrt2 - \e Cr^{\dim G/2+1}\ge 1
\eean
if we choose $D$ to be sufficiently small in the definition of $\e$.

By the triangle inequality, there is $1\le j\le l$
such that
\[
\|\Reg(g_j)f-f\|_2=\|\Reg(g_1\cdots g_j)f-\Reg(g_1\cdots g_{j-1})f\|_2
\ge 1/l.
\]
This implies
\[
\|\Reg(g_j)f+f\|_2\le2-1/l^2.
\]
Finally, we can conclude
\bean
\|\Reg(\mu_S)f\|_2
&\le&\frac{1}{|S|}\left(\|\Reg(g_j)f+f\|_2+\left\|
\sum_{g\in S\backslash\{1,g_j\}}\Reg(g)f\right\|_2\right)\\
&\le&1-\frac{1}{|S|\diam_{\e}(G,S)^2}
\eean
which was to be proved.
(Recall the assumption $1\in S$.)
\end{proof}

\bigskip

\noindent{\sc Centre for Mathematical Sciences,
Wilberforce Road, Cambridge CB3 0WA,
England}

\noindent{\em e-mail address:} pv270@dpmms.cam.ac.uk

\bigskip

\noindent and
\bigskip

\noindent
{\sc The Einstein Institute of Mathematics, Edmond J. Safra Campus,
Givat Ram, The Hebrew University of Jerusalem, Jerusalem, 91904, Israel}


\begin{thebibliography}{99}

\bibitem{BS}
L. Babai and \'A. Seress,
{\em On the diameter of permutation groups},
European J. Combin. {\bf 13} No. 4 (1992),
231--243.


\bibitem{BG1}
J. Bourgain and A. Gamburd,
{\em Uniform expansion bounds for Cayley graphs of
$SL_2(\mathbb F_p)$,}
Ann. of Math.
{\bf 167} (2008),
625--642.


\bibitem{BGSU1}
J. Bourgain and A. Gamburd,
{\em On the spectral gap for finitely-generated subgroups of $SU(2)$}
Invent. math. {\bf 171} (2008), no. 1, 83--121.

\bibitem{BG2}
J. Bourgain and A. Gamburd,
{\em Expansion and random walks in $SL_d(\mathbb Z/p^n\mathbb Z)$:I},
J. Eur. Math. Soc.
{\bf 10} (2008),
987--1011.

\bibitem{BG3}
J. Bourgain and A. Gamburd,
{\em Expansion and random walks in $SL_d(\mathbb Z/p^n\mathbb Z)$:II.
With an appendix by J. Bourgain},
J. Eur. Math. Soc.
{\bf 11} No. 5. (2009),
1057--1103.

\bibitem{BGSU2}
J. Bourgain and A. Gamburd,
{\em A spectral gap theorem in $SU(d)$},
J. Eur. Math. Soc. (JEMS)
{\bf 14} No. 5 (2012),
1455-–1511.

\bibitem{BGS}
J. Bourgain, A. Gamburd and P. Sarnak,
{\em Affine linear sieve, expanders, and sum-product},
Invent. math., {\bf 179} No. 3. (2010),
559--644.

\bibitem{BK}
J. Bourgain and A. Kontorovich,
{\em On the local--global conjecture for integral Apollonian gaskets,
With an appendix by P\'eter P. Varj\'u},
Invent. math., to appear
\href{http://dx.doi.org/10.1007/s00222-013-0475-y}
{http://dx.doi.org/10.1007/s00222-013-0475-y}

\bibitem{BV}
J. Bourgain and P. P. Varj\'u,
{\em Expansion in $SL_d(\Z/q\Z)$, $q$ arbitrary},
Invent. math. {\bf 188}, No 1. (2012) 151--173.

\bibitem{Bre}
E. Breuillard,
{\em Random walks on Lie groups}, preprint,
\href{http://www.math.u-psud.fr/~breuilla/part0gb.pdf}%
{http://www.math.u-psud.fr/$\sim$breuilla/part0gb.pdf}



\bibitem{BGT}
E. Breuillard, B. J. Green and T. C. Tao,
{\em Approximate subgroups of linear groups},
Geom. Funct. Anal.
{\bf 21} (2011) 774--819.

\bibitem{BtD}
T. Br\"ocker and T. tom Dieck,
{\em Representations of compact Lie groups},
Graduate Texts in Mathematics, {\bf 98}
Springer-Verlag, New York, 1995.



\bibitem{DSC}
P. Diaconis and L. Saloff-Coste,
{\em Comparison techniques for random walk on finite groups},
Ann. Probab. {\bf 21}, No. 4 (1993)
2131--2156.

\bibitem{DN}
C. M. Dawson and M. A. Nielsen,
{\em The Solovay-Kitaev algorithm},
Quantum Inf. Comput. {\bf 6} No. 1 (2006),
81--95. 


\bibitem{Din1}
O. Dinai,
{\em Poly-log diameter bounds for some families of finite groups,}
Proc. Amer. Math. Soc.
{\bf 134} No. 11. (2006),
3137--3142.

\bibitem{Din2}
O. Dinai,
{\em Diameters of Chevalley groups over local rings},
Arch. Math. (Basel)
{\bf 99} No. 5 (2012),
417–-424.


\bibitem{Dol}
D. Dolgopyat,
{\em On mixing properties of compact
group extensions of hyperbolic systems},
Israel J. Math. {\bf 130} (2002), 157--205. 

\bibitem{EHK}
J. S. Ellenberg, C. Hall and E. Kowalski,
{\em Expander graphs, gonality, and variation of Galois representations}
Duke Math. J. {\bf 161} No. 7, (2012), 1233--1275.

\bibitem{GJS}
A. Gamburd, D. Jakobson and P. Sarnak,
{\em Spectra of elements in the group ring of $\SU(2)$},
J. Eur. Math. Soc. {\bf 1}, (1999) 51--85.

\bibitem{GSh}
A. Gamburd, M. Shahshahani,
{\em Uniform diameter bounds for some families of Cayley graphs.}
Int. Math. Res. Not.
{\bf 71} (2004),
3813--3824.



\bibitem{Gow}
W. T. Gowers,
{\em Quasirandom Groups},
Combin. Probab. Comput.
{\bf 17} (2008),
363--387.

\bibitem{Hel1}
H. A. Helfgott,
{\em Growth and generation in $SL_2(\mathbb Z/p\mathbb Z)$},
Ann. of Math.
{\bf 167} (2008),
601--623.

\bibitem{Hel2}
H. A. Helfgott,
{\em Growth in $SL_3(\mathbb Z/p\mathbb Z)$},
J. Eur. Math. Soc.,
{\bf 13} No. 3 (2011),
761--851.

\bibitem{HS}
H. A. Helfgott, \'A. Seress,
{\em On the diameter of permutation groups},
Ann. of Math., to appear,
\href{http://arxiv.org/abs/1109.3550v4}{http://arxiv.org/abs/1109.3550v4}


\bibitem{KS}
D. Kelmer and L. Silberman,
{\em A uniform spectral gap for congruence covers of a hyperbolic manifold},
Amer. J. Math.
{\bf 135} No 4 (2013),
1067--1085.

\bibitem{KSV}
A. Yu. Kitaev, A. H. Shen and M. N. Vyalyi,
{\em Classical and quantum computation},
Translated from the 1999 Russian original by Lester J. Senechal.
Graduate Studies in Mathematics, {\bf 47}
American Mathematical Society, Providence, RI, 2002.

\bibitem{Lev}
A. Lev,
{\em The covering number of the group $\PSL_n(F)$}
J. Algebra {\bf 182} No. 1 (1996), 60--84.

\bibitem{LM}
A. Lubotzky and C. Meiri,
{\em Sieve methods in group theory I: Powers in linear groups}
J. Amer. Math. Soc. {\bf 25} No. 4 (2012),
1119--1148.

\bibitem{NP}
N. Nikolov and L. Pyber,
{\em Product decompositions of quasirandom groups and
a Jordan type theorem},
J. Eur. Math. Soc.
{\bf 13} No. 4 (2011),
1063--1077.



\bibitem{PSz}
L. Pyber, E. Szab\'o,
{\em Growth in finite simple groups of Lie type of bounded rank},
preprint,
\href{http://arxiv.org/abs/1005.1858}{http://arxiv.org/abs/1005.1858}


\bibitem{SGV}
A. Salehi Golsefidy and P. P. Varj\'u,
{\em Expansion in perfect groups},
Geom. Funct. Anal.
{\bf 22} No. 6 (2012),
1832-–1891.



\bibitem{SX}
P. Sarnak and X. X. Xue,
{\em Bounds for multiplicities of automorphic representations},
Duke Math. J.,
{\bf 64} No. 1, (1991),
207--227.

\bibitem{dS}
N. de Saxc\'e,
{\em Trou dimensionel dans les groupes de Lie compacts semisimples
via les s\'eries de Fourier},
J. Anal. Math., to appear,
\href{http://www.ma.huji.ac.il/~saxce/dimensionnel.pdf}%
{http://www.ma.huji.ac.il/$\sim$saxce/dimensionnel.pdf}




\bibitem{Var0}
P. P. Varj\'u,
{\em Expansion in $SL_d(O_K/I)$, $I$ square-free,}
J. Eur. Math. Soc. {\bf 14} No 1. (2012) 273--305.

\bibitem{Var}
P. P. Varj\'u,
{\em Random walks in Euclidean space},
preprint,
\href{http://arxiv.org/abs/1205.3399}{http://arxiv.org/abs/1205.3399}

\end{thebibliography}
\end{document}